\newcommand{\calHom}{\mathcal{H}\mathrm{om}}
\newcommand{\G}{\mathbf{G}}
\newcommand{\Z}{\mathbf{Z}}
\newcommand{\Q}{\mathbf{Q}}
\renewcommand{\P}{\mathbf{P}}
\newcommand{\Spec}{{\operatorname{Spec}}}
\newcommand{\Supp}{{\operatorname{Supp}}}
\newcommand{\uSpec}{{\underline{\Spec}}}
\newcommand{\Fib}{\mathrm{Fib}}
\newcommand{\Hom}{\operatorname{Hom}}
\newcommand{\PGL}{\operatorname{PGL}}
\newcommand{\Ext}{\operatorname{Ext}}
\newcommand{\ob}{\mathrm{ob}}
\newcommand{\ch}{\mathrm{char}}
\newcommand{\D}{\mathrm{D}}
\newcommand{\R}{\mathrm{R}}
\newcommand{\id}{\operatorname{id}}
\newcommand{\red}{\mathrm{red}}
\newcommand{\fm}{\mathfrak{m}}
\newcommand{\fn}{\mathfrak{n}}
\newcommand{\fp}{\mathfrak{p}}
\newcommand{\Def}{{\operatorname{Def}}}
\newcommand{\Art}{{\operatorname{Art}}}
\newcommand{\SArt}{{\operatorname{SArt}}}
\newcommand{\Mod}{{\operatorname{Mod}}}
\newcommand{\Der}{{\operatorname{Der}}}
\newcommand{\QCoh}{{\operatorname{QCoh}}}
\newcommand{\Alg}{ {\operatorname{Alg}}}
\newcommand{\SAlg}{ {\operatorname{SAlg}}}
\newcommand{\can}{ {\mathrm{can}}}
\newcommand{\depth}{\operatorname{depth}}
\newcommand{\factor}[2]{\left. \raise 2pt\hbox{\ensuremath{#1}} \right/
        \hskip -2pt\raise -2pt\hbox{\ensuremath{#2}}}
\DeclareMathOperator{\Ric}{Ric}
\newcommand{\shE}{\mathcal{E}}
\newcommand{\shF}{\mathcal{F}}
\newcommand{\shG}{\mathcal{G}}
\newcommand{\shT}{\mathcal{T}}
\newcommand{\Xt}{\tilde{X}}
\DeclareMathOperator{\Prod}{Prod}
\newcommand{\Aut}{\operatorname{Aut}}
\newcommand{\rk}{\operatorname{rk}}
\newcommand{\comment}[1]{}
\begin{document}

\bibliographystyle{alpha}

\newtheorem{theorem}{Theorem}[section]
\newtheorem*{theorem*}{Theorem}
\newtheorem*{condition*}{Condition}
\newtheorem*{definition*}{Definition}
\newtheorem{proposition}[theorem]{Proposition}
\newtheorem{lemma}[theorem]{Lemma}
\newtheorem{corollary}[theorem]{Corollary}
\newtheorem{claim}[theorem]{Claim}
\newtheorem{claimex}{Claim}[theorem]

\theoremstyle{definition}
\newtheorem{definition}[theorem]{Definition}
\newtheorem{question}[theorem]{Question}
\newtheorem{remark}[theorem]{Remark}
\newtheorem{construction}[theorem]{Construction}
\newtheorem{example}[theorem]{Example}
\newtheorem{condition}[theorem]{Condition}
\newtheorem{warning}[theorem]{Warning}
\newtheorem{notation}[theorem]{Notation}

\title{Moduli of products of stable varieties}
\author{Bhargav Bhatt, Wei Ho, Zsolt Patakfalvi, Christian Schnell}
\begin{abstract}
We study the moduli space of a product of stable varieties over the field of complex numbers, as defined via the minimal model program. Our main results are: (a) taking products gives a well-defined morphism from the product of moduli spaces of stable varieties to the moduli space of a product of stable varieties, (b) this map is always finite \'etale, and (c) this map very often is an isomorphism. Our results generalize and complete the work of Van Opstall in dimension $1$. The local results rely on a study of the cotangent complex using some derived algebro-geometric methods, while the global ones use some differential-geometric input.
\end{abstract}
\maketitle


\section{Introduction}
\label{sec:intro}

The moduli space $\mathcal{M}_g$ of curves and its Deligne-Mumford compactification $\overline{\mathcal{M}_g}$ are two fundamental objects of modern mathematics with wide-ranging applications. A key to their utility is the modularity of the compactification $\overline{\mathcal{M}_g}$: the compactification itself parametrizes curves, possibly with mild singularities.  Recent advances in the minimal model program \cite{BC_CP_HCD_MJ_EOM} have provided us with a good higher dimensional analogue of this phenomenon: after fixing the necessary numerical invariants, one now has access to a compact moduli space $\overline{\mathcal{M}_h}$ that contains the space $\mathcal{M}_h$ of smooth objects as an open subspace, with the space $\overline{\mathcal{M}_h}$ itself parametrizing mildly singular varieties called {\em stable} varieties \cite{VE_QPM, KJ_SO, KollarModuliSurvey}. Although $\overline{\mathcal{M}_h}$ shares many nice properties of $\overline{\mathcal{M}_g}$, e.g., it is a DM-stack of finite type over the base field, it may possibly have many connected components that behave very differently \cite{CF_CCO, VR_MLI}. Hence, almost all available results on the global geometry of $\overline{\mathcal{M}_h}$ pertain to  specific components of the moduli of surfaces (e.g., \cite{VanOpstallModProd, VOMA_SDIP, LW_SD, RS_CM, AV_PR_ECO, LY_ACO}) or special components of the moduli of log-stable varieties (e.g., \cite{HP_KS_TJ_COT, HP_KS_TJ_SPT, AV_CMI, HB_SLS, HP_CM}).

Our goal in this paper is to produce results applicable to every component of $\overline{\mathcal{M}_h}$ for any $h$ --- and in particular, to any dimension --- by generalizing the work of Van Opstall \cite{VanOpstallModProd}. 
Specifically, we explore the behavior of these moduli spaces under the operation of taking products. To explain our results, let us fix some notation first (precise definitions will be given later). Let $k$ be a field of characteristic $0$.  Given a stable variety $Z$ over $k$, let $\mathcal{M}(Z)$ denote the connected component of the appropriate moduli space $\overline{\mathcal{M}_h}$ spanned by $Z$; this space is a Deligne-Mumford stack.  Given stable varieties $X$ and $Y$, we show that taking products defines a morphism 
\[ \Prod_{X,Y}:\mathcal{M}(X) \times \mathcal{M}(Y) \to \mathcal{M}(X \times Y).\] 
Our main local result is

\begin{theorem}
\label{mainthm:local}
The map $\Prod_{X,Y}$ is a finite \'etale cover of Deligne-Mumford stacks for any stable varieties $X$ and $Y$.
\end{theorem}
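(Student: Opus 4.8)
The plan is to establish three properties of $P := \Prod_{X,Y}$ separately --- it is representable by algebraic spaces, it is proper, and it is \'etale --- and then to invoke that a representable, proper, quasi-finite morphism is finite (and that \'etale morphisms are quasi-finite). Representability is formal from the moduli description: an automorphism of an object $(\mathcal{X},\mathcal{Y},\varphi\colon \mathcal{X}\times\mathcal{Y}\xrightarrow{\,\sim\,}\mathcal{Z})$ over a test scheme lying in a fibre of $P$ is a pair $(\alpha,\beta)\in\Aut(\mathcal{X})\times\Aut(\mathcal{Y})$ with $\varphi\circ(\alpha\times\beta)=\varphi$, forcing $\alpha=\id$ and $\beta=\id$; so every geometric fibre of $P$ is an algebraic space. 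Properness follows by cancellation: each of $\mathcal{M}(X)$, $\mathcal{M}(Y)$, $\mathcal{M}(X\times Y)$ is a connected component, hence a closed substack, of the proper Deligne--Mumford stack $\overline{\mathcal{M}_h}$, so $\mathcal{M}(X)\times\mathcal{M}(Y)$ is proper over $k$ while $\mathcal{M}(X\times Y)$ is separated over $k$, whence $P$ is proper. (Once \'etaleness is known, $P$ is moreover surjective: it is open, being \'etale, and closed, being proper, and the target is connected.) So the real content is \'etaleness.

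To prove $P$ is \'etale --- equivalently, since all stacks are of finite type over the characteristic-zero field $k$, formally \'etale --- I would check the infinitesimal lifting criterion against a square-zero extension $A'\twoheadrightarrow A$ of Artinian local $k$-algebras with kernel $I$. Concretely: given stable families $\mathcal{X}_A,\mathcal{Y}_A$ over $A$ deforming $X,Y$, a stable family $\mathcal{Z}_{A'}$ over $A'$ deforming $X\times Y$, and an isomorphism over $A$ between $\mathcal{X}_A\times_A\mathcal{Y}_A$ and the restriction $\mathcal{Z}_A$ of $\mathcal{Z}_{A'}$, one must produce a unique pair of deformations $\mathcal{X}_{A'},\mathcal{Y}_{A'}$ over $A'$ together with a compatible isomorphism $\mathcal{X}_{A'}\times_{A'}\mathcal{Y}_{A'}\cong\mathcal{Z}_{A'}$. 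The mechanism is: (i) the deformation theory of $\mathcal{M}(Z)$ at a stable variety $Z$ is governed by a tangent--obstruction theory $\mathbf{T}^\bullet(Z)=\Ext^\bullet_Z(\L_Z,\mathcal{O}_Z)$ for a suitable cotangent complex $\L_Z$ of $Z$ over $k$; (ii) the Künneth decomposition $\L_{X\times Y}\simeq \pr_X^{*}\L_X\oplus\pr_Y^{*}\L_Y$ (transitivity triangle for $X\times Y\to X\to\Spec k$, flat base change $\L_{X\times Y/X}\simeq\pr_Y^{*}\L_Y$, which splits); and (iii) the identity $R\pr_{X*}\mathcal{O}_{X\times Y}\simeq\mathcal{O}_X\otimes_k R\Gamma(Y,\mathcal{O}_Y)$. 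Combining (ii), (iii) and adjunction gives, naturally in $P$,
\[
\mathbf{T}^{i}(X\times Y)\;\cong\;\bigoplus_{a+b=i}\mathbf{T}^{a}(X)\otimes_k H^{b}(Y,\mathcal{O}_Y)\;\oplus\;\bigoplus_{a+b=i}H^{b}(X,\mathcal{O}_X)\otimes_k\mathbf{T}^{a}(Y).
\]

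Two vanishings collapse this. A stable variety $Z$ is geometrically connected and reduced, so $H^0(Z,\mathcal{O}_Z)=k$; and $\L_Z$ is connective with $\mathcal{H}^0=\Omega_Z$, so $\mathbf{T}^{a}(Z)=0$ for $a\le 0$, the case $a=0$ being $H^0(Z,\calHom(\Omega_Z,\mathcal{O}_Z))=\mathrm{Lie}\,\Aut(Z)=0$, which holds because a canonically polarized variety has finite automorphism group and $k$ has characteristic zero. Hence the displayed formula gives $\mathbf{T}^1(X\times Y)\cong\mathbf{T}^1(X)\oplus\mathbf{T}^1(Y)$ via $P$, and exhibits $\mathbf{T}^2(X)$ and $\mathbf{T}^2(Y)$ as direct summands of $\mathbf{T}^2(X\times Y)$. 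Now run the lifting criterion: the obstruction to lifting $\mathcal{Z}_A=\mathcal{X}_A\times_A\mathcal{Y}_A$ to $A'$ lies in $\mathbf{T}^2(X\times Y)\otimes_k I$, vanishes since $\mathcal{Z}_{A'}$ exists, and its components in $\mathbf{T}^2(X)\otimes_k I$ and $\mathbf{T}^2(Y)\otimes_k I$ are exactly the obstructions to lifting $\mathcal{X}_A$ and $\mathcal{Y}_A$, so $\mathcal{X}_{A'},\mathcal{Y}_{A'}$ exist; and the lifts of $(\mathcal{X}_A,\mathcal{Y}_A)$ form a torsor under $(\mathbf{T}^1(X)\oplus\mathbf{T}^1(Y))\otimes_k I$ that maps $P$-equivariantly, along the tangent isomorphism, bijectively onto the torsor of lifts of $\mathcal{Z}_A$ under $\mathbf{T}^1(X\times Y)\otimes_k I$, giving uniqueness of the lift landing on $\mathcal{Z}_{A'}$. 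Thus $P$ is formally \'etale, hence \'etale, and the theorem follows.

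The main obstacle is ingredient (i)--(ii): making precise that the $\mathbb{Q}$-Gorenstein deformation theory of the stable moduli stack at a possibly highly singular, non-lci stable variety is computed by $\Ext$'s of a cotangent complex, and establishing the Künneth decomposition in that refined framework --- this is where derived algebraic geometry and the canonical covering stack of $Z$ must be brought in. Once that is set up, the Künneth computation and the torsor bookkeeping are formal, and the only further nontrivial input is the classical finiteness of automorphism groups of canonically polarized varieties.
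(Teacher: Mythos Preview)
Your plan is essentially the paper's own strategy: reduce to \'etaleness using properness of the moduli stacks, then verify the infinitesimal lifting criterion by a K\"unneth-type analysis of the tangent--obstruction theory, with the admissible ($\Q$-Gorenstein) deformation theory handled via the canonical covering stack. The reductions you outline for representability and properness are fine; the paper packages them as an appeal to Zariski's main theorem for Deligne--Mumford stacks, but that is the same content.

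One refinement you should anticipate in carrying out the ``main obstacle'' you flag. The deformation theory of $\mathcal{M}(Z)$ at $Z$ is controlled by $\Def_{Z^\can}$ via Abramovich--Hassett, and the K\"unneth splitting $L_{X^\can \times Y^\can} \simeq \pr^* L_{X^\can} \oplus \pr^* L_{Y^\can}$ does hold and gives your displayed formula for that product. But the moduli-theoretic target is governed by $\Def_{(X\times Y)^\can}$, and in general $(X\times Y)^\can \neq X^\can \times Y^\can$: the canonical covering stack of a product is a $B\G_m$-quotient, while the product of the covering stacks is a $B(\G_m\times\G_m)$-quotient, and these differ away from the Gorenstein locus. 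So the K\"unneth does not literally decompose the ``admissible cotangent complex'' of $X\times Y$. What the paper does instead is factor
\[
\Def_{X^\can}\times\Def_{Y^\can}\;\xrightarrow{\ a\ }\;\Def_{X^\can\times Y^\can}\;\xrightarrow{\ b\ }\;\Def_{(X\times Y)^\can},
\]
prove $a$ is an equivalence by exactly your K\"unneth/torsor bookkeeping (with the obstruction compatibility made precise via the derived formalism), and then prove $b$ is an equivalence by a separate argument: the natural map $X^\can\times Y^\can \to (X\times Y)^\can$ is \'etale outside codimension $\ge 3$ and the source is $S_3$ on the complement, which forces $\Ext^i(L_\pi,\mathcal{O})=0$ for $i\le 2$ and hence $\Def_\pi\simeq\Def_{(X\times Y)^\can}$; a dual argument using $\pi_*\mathcal{O}\simeq\mathcal{O}$ and absence of infinitesimal automorphisms gives $\Def_\pi\simeq\Def_{X^\can\times Y^\can}$. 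This intermediate comparison is the part of your ``obstacle'' that is genuinely new beyond K\"unneth, and your plan should make room for it.
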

Going one step further, one may ask when the map $\Prod_{X,Y}$ is an isomorphism. By Theorem \ref{mainthm:local}, it suffices to find a {\em single point} of $\mathcal{M}(X \times Y)$ where $\Prod_{X,Y}$ has degree $1$. If $X$ and $Y$ are isomorphic or even simply deformation equivalent, then $\Prod_{X,Y}$ cannot be an isomorphism due to the symmetry of the source. Our main global result is that this is essentially the only obstruction, provided we work with {\em smooth} varieties.

\begin{theorem}
\label{mainthm:global}
Let $X$ and $Y$ be two stable varieties such that $X \times Y$ is smooth and neither
$X$ nor $Y$ can be written nontrivially as a product of two stable varieties.  If $X$ and $Y$ are not deformation equivalent, then $\Prod_{X,Y}$ is an isomorphism.  Otherwise, the map $\Prod_{X,Y}$ is an $S_2$-torsor.
\end{theorem}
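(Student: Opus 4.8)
The plan is to deduce the theorem from Theorem~\ref{mainthm:local} together with a single fibre computation governed by a unique-factorization statement for canonically polarized varieties. First, since $\Prod_{X,Y}$ is finite \'etale and both $\mathcal{M}(X)\times\mathcal{M}(Y)$ and $\mathcal{M}(X\times Y)$ are connected, its degree is constant, so it suffices to analyze the fibre over the point $[X\times Y]$: the two assertions amount to showing this fibre consists of one, respectively two, reduced points. I would also record that the smooth locus of $X\times Y$ is $X_{\mathrm{sm}}\times Y_{\mathrm{sm}}$, so $X\times Y$ smooth forces $X$ and $Y$ to be smooth; hence $X$, $Y$, and $X\times Y$ are smooth projective varieties with ample canonical bundle. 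For such varieties, ``not a nontrivial product of stable varieties'' is the same as ``not a nontrivial product of varieties at all'', since a factor of a smooth variety with ample canonical class is again smooth with ample canonical class, hence stable. Thus $X$ and $Y$ are indecomposable canonically polarized manifolds.

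The geometric heart of the argument, and the source of the differential-geometric input, is the following rigidity statement: \emph{a smooth projective variety $Z$ with ample canonical bundle admits a decomposition $Z\cong A_1\times\cdots\times A_r$ into indecomposable (automatically canonically polarized) factors; the multiset $\{A_1,\dots,A_r\}$ is determined up to isomorphism by $Z$; and every isomorphism between two such product decompositions is, after reindexing, a product of isomorphisms of the factors.} In particular $\Aut(A\times B)=\Aut(A)\times\Aut(B)$ when $A,B$ are indecomposable and non-isomorphic, and $\Aut(A\times A)=(\Aut(A)\times\Aut(A))\rtimes S_2$. To prove this I would invoke the Aubin--Yau theorem: $Z$ carries a K\"ahler--Einstein metric with negative Ricci curvature, unique up to isometry. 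By the de~Rham splitting theorem the universal cover of $Z$ decomposes isometrically into irreducible factors, with no Euclidean factor since $\Ric<0$ everywhere; this decomposition is canonical up to permuting pairwise isometric factors, is preserved by $\pi_1(Z)$, and is preserved by any biholomorphism of $Z$ (which is automatically an isometry, since it pulls the K\"ahler--Einstein metric back to a K\"ahler--Einstein metric and such a metric is unique). Consequently the product decompositions of $Z$ are stable under common refinement, so there is a unique finest one, and the statements about isomorphisms are read off from the $\pi_1(Z)$-action on the set of irreducible factors.

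With this in hand, I would compute the fibre over $[X\times Y]$. Its objects are triples $(U,V,\phi)$ with $[U]\in\mathcal{M}(X)$, $[V]\in\mathcal{M}(Y)$, and $\phi\colon U\times V\xrightarrow{\ \sim\ }X\times Y$, a morphism $(U,V,\phi)\to(U',V',\phi')$ being a pair $(\alpha,\beta)$ of isomorphisms with $\phi'\circ(\alpha\times\beta)=\phi$. An automorphism of $(U,V,\phi)$ satisfies $\alpha\times\beta=\id$, hence is trivial, so the fibre is a scheme and $\Prod_{X,Y}$ is representable. Since $X$ and $Y$ are indecomposable, the rigidity statement forces $U$ and $V$ to be indecomposable with $\{U,V\}=\{X,Y\}$ as multisets; and for a fixed $(U,V)$ the isomorphism classes of triples $(U,V,\phi)$ are in bijection with the coset space $\Aut(U\times V)/(\Aut(U)\times\Aut(V))$, which is a single point if $U\not\cong V$ and two points if $U\cong V$. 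If $X$ and $Y$ are not deformation equivalent, then $[Y]\notin\mathcal{M}(X)$, so the only admissible ordered pair is $(X,Y)$ and the fibre is one reduced point; hence $\Prod_{X,Y}$ is finite \'etale of degree $1$, i.e.\ an isomorphism. If $X$ and $Y$ are deformation equivalent, then $\mathcal{M}(X)=\mathcal{M}(Y)=:\mathcal{N}$ and the fibre has length $2$ (from the ordered pairs $(X,Y)$ and $(Y,X)$ when $X\not\cong Y$, or from the two decomposition classes of $X\times X$ when $X\cong Y$); moreover the canonical isomorphisms $U\times V\cong V\times U$ make $\Prod_{X,Y}$ invariant under the $S_2$ that swaps the two copies of $\mathcal{N}$. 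Hence $\Prod_{X,Y}$ factors as the $S_2$-torsor $\mathcal{N}\times\mathcal{N}\to[(\mathcal{N}\times\mathcal{N})/S_2]$ followed by a finite \'etale map which, having degree $1$, is an isomorphism; so $\Prod_{X,Y}$ is itself an $S_2$-torsor.

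The main obstacle is the rigidity/unique-factorization statement. The delicate points are the interaction of the de~Rham splitting of the universal cover with the monodromy $\pi_1(Z)$ --- in particular ensuring that ``diagonal'' identifications do not obstruct a product decomposition from being refined --- and the reduction of biholomorphisms to isometries. Once that statement is established, the remaining steps (the fibre count, representability of $\Prod_{X,Y}$, and the identification of the degree-$2$ case with an $S_2$-torsor) are formal.
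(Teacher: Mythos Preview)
Your overall strategy matches the paper's: reduce to a fibre count over $[X\times Y]$ using Theorem~\ref{mainthm:local}, and feed that count with a unique-factorization theorem for canonically polarized manifolds (the paper's Theorem~\ref{mainthm:productirreducible}). Your reduction to the smooth case, the fibre description, the representability check, and the identification of the degree-$2$ case with an $S_2$-torsor are all the same as what the paper does (compare Corollaries~\ref{cor:isomorphism}, \ref{cor:exact_description} and Proposition~\ref{prop:product_map_fiber}).

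Where you diverge is in the proof of unique factorization itself. The paper goes through the tangent bundle: Yau gives a K\"ahler--Einstein metric, the Uhlenbeck--Yau theorem then shows $\shT_X$ is polystable, and an argument of Beauville forces the stable summands to be pairwise \emph{non-isomorphic} (using $\mu(\shT_X)<0$); Beauville's splitting theorem then lifts this to a product decomposition of the universal cover on which $\pi_1$ acts \emph{diagonally}, and the common-refinement lemma is deduced by a group-theoretic argument on $\pi_1$. Your route is more metric-geometric: you use Aubin--Yau, then the de~Rham decomposition of the universal cover, and the key observation that any biholomorphism of a negatively curved K\"ahler--Einstein manifold is an isometry (by uniqueness of the metric), hence must respect the de~Rham factors. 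This is a legitimate alternative and has the virtue of bypassing Uhlenbeck--Yau entirely. The cost is exactly the point you flag: a priori $\pi_1$ may permute isometric de~Rham factors, and you must argue carefully that product decompositions of $Z$ correspond to $\pi_1$-stable partitions of the factors and that any two such partitions admit a common refinement. The paper sidesteps this because the pairwise non-isomorphism of the stable summands of $\shT_X$ rules out any permutation action from the start. Either approach yields the same conclusion; yours is arguably more elementary in its inputs, the paper's is cleaner at the point where the monodromy is controlled.
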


The (slightly technical) notion of deformation equivalence above will be discussed more carefully in \S \ref{sec:definitions}.  A generalization of Theorem \ref{mainthm:global} applies to smooth stable varieties admitting a product decomposition, as explained in Theorem \ref{theorem:main}.  We expect but do not know if these results are true without the smoothness assumption.

Theorem \ref{mainthm:global} is a consequence of the following more general result
about canonically polarized manifolds (i.e., compact complex manifolds with ample
canonical bundle), whose proof occupies \S\ref{sec:globaltheory} below.

\begin{theorem}
\label{mainthm:productirreducible}
Every canonically polarized manifold decomposes uniquely into
a product of irreducible factors.
\end{theorem}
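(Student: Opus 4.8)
The statement has two halves --- existence and uniqueness of the decomposition --- and my plan is to dispatch existence by a one-line induction and concentrate all the work on uniqueness. For existence: if $X$ is not already irreducible, write $X\cong X_1\times X_2$ nontrivially; restricting $K_X\cong\pr_1^*K_{X_1}\otimes\pr_2^*K_{X_2}$ (which is ample) to a fibre $X_1\times\{\mathrm{pt}\}$ shows $K_{X_1}$ is ample, and likewise $K_{X_2}$, so the two factors are canonically polarized of strictly smaller dimension and one recurses. To obtain uniqueness I would prove a \emph{common-refinement lemma}: any two product decompositions $X=\prod_iX_i$ and $X=\prod_jY_j$ into canonically polarized manifolds admit a common refinement, i.e.\ there are canonically polarized $Z_{ij}$ with $X_i\cong\prod_jZ_{ij}$ and $Y_j\cong\prod_iZ_{ij}$. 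Granting this, fix one irreducible decomposition $X=\prod_kX_k$ (it exists by the above) and compare it with an arbitrary irreducible decomposition $X=\prod_jY_j$: in the common refinement $X_k\cong\prod_jZ_{kj}$, irreducibility of $X_k$ forces all but one of the $Z_{kj}$ to be a point, and symmetrically $Y_j\cong\prod_kZ_{kj}$ has all but one factor a point, so the nontrivial $Z_{kj}$ set up a bijection between the $X_k$ and the $Y_j$ under which corresponding factors are isomorphic. Hence the multiset of isomorphism classes of irreducible factors is well defined.

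For the common-refinement lemma the differential geometry enters. By Aubin--Yau (applicable because $K_X$ is ample, so $c_1(X)<0$) the manifold $X$ carries a \emph{unique} K\"ahler--Einstein metric $\omega$ with $\Ric(\omega)=-\omega$; pull it back to the universal cover $\pi\colon\tilde X\to X$ to get a complete K\"ahler--Einstein metric $\tilde\omega$. By the de Rham decomposition theorem $(\tilde X,\tilde\omega)$ splits as a Riemannian product $\tilde X=\prod_{\alpha\in A}\tilde X_\alpha$ of simply connected complete factors with irreducible holonomy; there is no flat factor, since $\Ric=-\tilde\omega$ is negative definite whereas a Euclidean factor would make the Ricci tensor degenerate in those directions, and because $\tilde\omega$ is K\"ahler each $\tilde X_\alpha$ is a complex submanifold. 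This decomposition is canonical, so $\Gamma:=\pi_1(X)$, which acts on $\tilde X$ by holomorphic isometries, permutes the $\tilde X_\alpha$ while acting isometrically on them.

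Now feed the decompositions into this picture. Given $X=\prod_iX_i$, uniqueness of the K\"ahler--Einstein metric forces $\omega=\sum_i\pr_i^*\omega_{X_i}$, so the decomposition lifts to a metric product $\tilde X=\prod_i\tilde X_i$ on which $\Gamma=\prod_i\pi_1(X_i)$ acts factor-wise (a genuine direct product of groups, since $\pi_1$ of a product is the direct product of the $\pi_1$'s); by uniqueness of the de Rham decomposition this metric product is a regrouping of the $\tilde X_\alpha$, so it corresponds to a $\Gamma$-invariant partition $A=\bigsqcup_iA_i$ with $\tilde X_i=\prod_{\alpha\in A_i}\tilde X_\alpha$ and $\pi_1(X_i)$ acting only on $\tilde X_i$. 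Doing the same for $X=\prod_jY_j$ produces a second partition $\{B_j\}$, and the refined partition $\{A_i\cap B_j\}$ is again $\Gamma$-invariant. Writing $\tilde X_{ij}:=\prod_{\alpha\in A_i\cap B_j}\tilde X_\alpha$ and $\Delta_{ij}$ for the image of $\Gamma$ in $\mathrm{Isom}(\tilde X_{ij})$, the equalities $\Gamma=\prod_i\pi_1(X_i)=\prod_j\pi_1(Y_j)$ propagate under the coordinate projections to give $\pi_1(X_i)=\prod_j\Delta_{ij}$, $\pi_1(Y_j)=\prod_i\Delta_{ij}$ and $\Gamma=\prod_{ij}\Delta_{ij}$. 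Then $Z_{ij}:=\tilde X_{ij}/\Delta_{ij}$ is a compact complex manifold whose canonical bundle is ample (being a factor of $X_i$), and by construction $X_i\cong\prod_jZ_{ij}$ and $Y_j\cong\prod_iZ_{ij}$ --- exactly the common refinement.

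The step I expect to be the real obstacle is the last one: checking that the combinatorial refinement $\{A_i\cap B_j\}$ of the two de Rham--stable partitions descends to an honest \emph{product} decomposition of $X$ rather than merely to a finite \'etale cover. This is precisely where it matters that $\pi_1$ of a product is the \emph{full} direct product of the $\pi_1$'s of the factors, so that $\Gamma$, regarded inside $\prod_i\mathrm{Isom}(\tilde X_i)$, is the full product of its projections; that is what makes the identity $\Gamma=\prod_{ij}\Delta_{ij}$ come out and the quotients $Z_{ij}$ assemble correctly. The remaining points requiring care are invoking de Rham's theorem in a form valid for the (generally non-compact) complete manifold $\tilde X$, ruling out a flat factor, and tracking the complex structures so that the $\tilde X_\alpha$, and hence the $Z_{ij}$, are genuinely complex manifolds.
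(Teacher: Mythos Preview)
Your argument is correct, and the overall architecture --- existence by dimension induction, uniqueness via a common-refinement lemma, and the common refinement obtained by splitting $\pi_1(X)$ against an ambient product of isometry groups --- is the same as the paper's. The group-theoretic step you flagged is handled exactly as in the paper (they do only the $2\times 2$ case $G=G_{12}\times G_{34}=G_{13}\times G_{24}\Rightarrow G=G_1\times G_2\times G_3\times G_4$, but your general version follows by the same projection argument).

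Where you genuinely diverge is in how the ``finest'' reference decomposition is produced. The paper first proves, as an independent result, that $\shT_X$ is polystable with pairwise non-isomorphic stable summands: Yau gives the K\"ahler--Einstein metric, Uhlenbeck--Yau gives a Hermite--Einstein structure on $\shT_X$ and hence polystability, and a slope argument of Beauville (no stable summand of negative slope can carry a flat connection) rules out repeated factors. A short linear-algebra lemma then forces any direct-sum decomposition of $\shT_X$ to be a regrouping of these summands, and Beauville's splitting theorem lifts this to a product decomposition of $\tilde X$ with diagonal $\pi_1$-action. You bypass all of this by applying the de~Rham decomposition theorem directly to $(\tilde X,\tilde\omega)$, using uniqueness of the K\"ahler--Einstein metric to see that each given product decomposition of $X$ is already a metric product, and then matching against the de~Rham factors by uniqueness of the de~Rham splitting. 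Your route is lighter --- it does not need the Uhlenbeck--Yau theorem --- at the cost of working upstairs on $\tilde X$ and allowing $\Gamma$ to permute isometric de~Rham factors (which, as you note, causes no trouble since both partitions $\{A_i\}$ and $\{B_j\}$, hence their common refinement, are $\Gamma$-stable). The paper's route has the side benefit of establishing the polystability of $\shT_X$ with non-isomorphic summands as a theorem in its own right, living already on $X$.
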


\subsection*{Comments on proofs}
Granting the existence of a proper moduli stack of stable varieties, Theorem \ref{mainthm:local} immediately reduces to a statement about the deformation theory of stable varieties. We approach this statement via the Abramovich-Hassett theory of canonical covering stacks which relates the {\em admissible} deformation theory of a stable variety $X$ with the usual deformation theory of an associated stack $X^\can$. The key point then (following obvious notation) is to show that $\Def_{X^\can} \times \Def_{Y^\can}$ is equivalent to $\Def_{ (X \times Y)^\can }$; we show this by equating both sides with $\Def_{X^\can \times Y^\can}$ via a detailed study of the relevant cotangent complexes. 

Theorem \ref{mainthm:global} and Theorem \ref{mainthm:productirreducible} are proven
using differential-geometric methods. The main input is the polystability of the
tangent bundle on a canonically polarized manifold (ensured by two theorems of Yau
and Uhlenbeck-Yau), and the fact that a direct sum decomposition of the tangent
bundle induces a product decomposition of the universal cover (from a theorem of Beauville).

\subsection*{Organization of the paper}

We set up the problem at hand, in \S \ref{sec:stablevarieties}, by describing the appropriate moduli functors for families of stable varieties and constructing the product map.  Theorem \ref{mainthm:local} is proven in \S \ref{sec:localtheory} by first considering a general theorem about deformations of products in \S \ref{sec:defthyabstractprod} and then specializing to our moduli spaces in \S \ref{sec:qgordefthy}.  These proofs use the language of derived algebraic geometry, which is reviewed in Appendix \ref{sec:dag}. Finally, \S \ref{sec:globaltheory} explains how many ways stable varieties can decompose as products, under assumptions about smoothness; in the many cases for which stable varieties decompose uniquely as products, the associated product map is injective.

\subsection*{Notation}

Throughout this paper, we use $k$ to denote a field of characteristic $0$ with two exceptions: in \S \ref{sec:localtheory}, we allow $k$ to have positive characteristic unless otherwise indicated,  and in Appendix \ref{sec:dag}, we allow  $k$ to be an arbitrary ring.

\subsection*{Acknowledgements} We thank Dan Abramovich for suggesting the directions pursued here, and the AMS and NSF for providing wonderful working conditions at the MRC program at Snowbird in July 2010, where this project was initiated. 
In addition, we would like to thank Stefan Kebekus and Chenyang Xu for help with proving Lemma \ref{stablevarinfaut}, and Zhiyu Tian for working with us when the project started.


\section{Stable varieties and construction of the product map} \label{sec:stablevarieties}

In this section, we define the moduli functor parametrizing stable varieties, and show that it is representable by a proper Deligne-Mumford stack; the properness uses recent results in the minimal model program due to Hacon-McKernan-Xu (unpublished). We then show there is a well-defined product map, which we will investigate in the sequel.

\subsection{Definitions of stable varieties and moduli functors} \label{sec:definitions}

As stated before, the moduli space of smooth projective curves of genus at least $2$ may be compactified by adding points representing some mildly singular curves obtained from smooth curves by a limiting procedure; the resulting curves are called {\em stable curves}. To compactify the space of birational equivalence classes of  varieties of general type in higher dimensions, one is then confronted with the problem of determining the singular varieties that should be allowed at the boundary. Mori theory solves this problem by providing a viable candidate definition for higher dimensional {\em stable varieties} and {\em stable families}; the robustness of the solution ensures that the moduli functor thus defined is automatically separated (by an old result of \cite{MatsusakaMumford}) and also proper, granting standard conjectures in higher dimensional geometry that are now theorems.

Our goal in this section is to review the definitions of stable varieties and stable families,  and also to say a few words about the resulting moduli space; more information can be found in the survey articles \cite{KovacsYPG,KollarModuliSurvey}. First, we recall some basic definitions. A variety $X$ is said to have {\em log canonical singularities} if $X$ is normal, $\Q$-Gorenstein, and satisfies the following: for a log resolution of singularities $g: \widetilde{X} \to X$ with exceptional divisor $E = \cup_i E_i$, if we write $K_{\widetilde{X}} = g^* K_X + \sum_i a_i E_i$, then we have $a_i \geq -1$ for all $i$.  The notion of {\em semi-log canonical singularities} is a non-normal generalization of log canonical singularities. Its definition is almost verbatim the same as of log canonical, but the log resolution is replaced by a good semi-resolution. We refer the reader to \cite[\S 6.5]{KovacsYPG} and \cite[Definition-Lemma 5.1]{KJ_SO} for more, and simply remark here that such singularities are automatically reduced, satisfy Serre's $S_2$ condition, are $\Q$-Gorenstein, and are Gorenstein in codimension $1$. For a coherent sheaf $\mathcal{F}$ on a noetherian scheme $X$ such that $\Supp(\mathcal{F}) = X$,  the {\em reflexive hull} $\mathcal{F}^{\ast\ast}$ is defined to be the double dual of $\mathcal{F}$. If $X$ is $S_2$ and $G_1$ (Gorenstein in codimension one) and $\mathcal{F}$ is a line bundle in codimension one, say over $U \subseteq X$, then $\mathcal{F}^{**} \cong j_* (\mathcal{F}|_U)$ \cite[Theorem 1.12]{HR_GD}, where $j : U \to X$ is the natural embedding. The {\em reflexive powers} $\mathcal{F}^{[i]}$ are then defined to be $\big(\mathcal{F}^{\otimes i}\big)^{\ast\ast}$ for any integer $i$ with the convention that $\mathcal{F}^{\otimes i} := \calHom(\mathcal{F},\mathcal{O}_X)^{\otimes -i}$ for $i < 0$; these definitions will typically be applied when $\mathcal{F}$ has generic rank $1$.

The main object of study in this paper is contained in the following definition:

\begin{definition}
A proper geometrically connected $k$-variety $X$ is called {\em stable} if $X$ has semi-log canonical singularities and $K_X$ is a $\Q$-Cartier and ample divisor.
\end{definition}

Next, we define families. The naive definition of a family of stable varieties (namely, a flat family with stable fibers) leads to pathologies as there are ``too many'' such families; see \cite[\S 7]{KovacsYPG} for examples. The correct definition, given below, imposes certain global constraints on the family.  In the sequel, we sometimes refer to such families as {\em admissible} families. The condition appearing below is known as {\em Koll\'ar's condition}.

\begin{definition} \label{def:stablefamily}
Given a $k$-scheme $S$, a {\em stable family} $f: X \to S$ is a proper flat morphism whose fibers are stable varieties and such that $\omega_{X/S}^{[m]}$ is flat over $S$ and commutes with base change, for every $m \in \Z$.
\end{definition}

Finally, we are ready to define the moduli functor of stable varieties.  The functor $SlcMod_h$ in \cite{KollarModuliSurvey} is similar but we choose to keep track of automorphisms.

\begin{definition}
Let $h(m)$ be an integer-valued function.  The moduli functor $\overline{\mathcal{M}_h}$ of stable varieties with Hilbert function $h$ is defined by setting $\overline{\mathcal{M}_h}(S)$ to be the groupoid of stable families $f: X \to S$ whose fibers have Hilbert function $h$ with respect to $\omega_{X/S}$. Given a stable variety $X$ over $k$, we let $\mathcal{M}(X)$ denote the connected component of $\overline{\mathcal{M}_{h(X)}}$ that contains $[X]$, where $h(X)$ is the Hilbert function of $X$.  Then two varieties $X$ and $Y$ are {\em deformation equivalent} if $\mathcal{M}(X)$ and $\mathcal{M}(Y)$ coincide.
\end{definition}

\subsection{Automorphisms of stable varieties}

In this section, we show that $\mathcal{M}(X)$ is a Deligne-Mumford stack for stable varieties $X$, although this fact must surely be known by the experts.  We start with a lemma that bounds how negative the canonical line bundle on a resolution of singularities of a stable variety can be.

\begin{lemma}
\label{lem:lc-bigness}
Let $X$ be a stable variety over $k$, and let $\pi:Y \to X$ be a semi-resolution with (reduced) exceptional divisor $E$. Then $K_Y + E$ is big.
\end{lemma}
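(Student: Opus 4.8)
The plan is to compare the divisor $K_Y+E$ with the pullback $\pi^{*}K_X$ and to use that $K_X$ is ample. First I would record that $\pi^{*}K_X$ is big: since $X$ is stable, $K_X$ is $\Q$-Cartier and ample, and bigness of a $\Q$-Cartier divisor is preserved under pullback along the proper birational morphism $\pi$ (in fact $\pi^{*}K_X$ is both nef and big). Hence it suffices to prove that $K_Y+E-\pi^{*}K_X$ is effective, because the sum of a big divisor and an effective divisor is again big — e.g. if $m\pi^{*}K_X\sim A+E'$ with $A$ ample and $E'$ effective (Kodaira's lemma), then $m(K_Y+E)$ is ample plus effective.

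Next I would establish the effectivity. On $Y$ the canonical class is an honest (Cartier) divisor, since a semi-resolution $Y$ is semi-smooth, hence Gorenstein; writing $K_Y=\pi^{*}K_X+\sum_i a_i E_i$ with the $E_i$ the $\pi$-exceptional prime divisors, the definition of semi-log canonical singularities gives $a_i\ge-1$ for all $i$, so that $K_Y+E=\pi^{*}K_X+\sum_i(a_i+1)E_i$ with $\sum_i(a_i+1)E_i\ge 0$. This already forces bigness. If one prefers to argue at the level of sections: taking $m$ divisible enough that $mK_X$ is Cartier and pushing forward along $\pi$ — using $\pi_{*}\mathcal{O}_Y=\mathcal{O}_X$, valid because $X$ is $S_2$ and $\pi$ is an isomorphism in codimension one (recall an slc variety is semi-smooth in codimension one), together with the analogous identity $\pi_{*}\mathcal{O}_Y\big(m\sum_i(a_i+1)E_i\big)=\mathcal{O}_X$ — one obtains $H^{0}\big(Y,\mathcal{O}_Y(m(K_Y+E))\big)\supseteq H^{0}\big(X,\mathcal{O}_X(mK_X)\big)$, and the right-hand side grows like $m^{\dim X}$ because $K_X$ is ample; hence $K_Y+E$ has maximal Iitaka dimension $\dim Y$.

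The part needing the most care is the interface with the non-normal setting: one must check that $K_Y$, the reduced exceptional divisor $E$, and $\pi^{*}K_X$ behave as divisorial objects exactly as in the normal (log canonical) case, and — since the definition of slc singularities in \S\ref{sec:definitions} is phrased via a \emph{good} semi-resolution while the lemma allows an arbitrary semi-resolution $\pi$ — that the discrepancy bound $a_i\ge-1$ persists for the given $\pi$. The latter is the standard fact that discrepancies of exceptional divisors of an slc variety only decrease under further blow-ups, so one reduces to the good case by dominating $\pi$ by a good semi-resolution and pushing down. This bookkeeping is the only genuine obstacle; the rest is the routine ``pullback of an ample divisor plus an effective divisor is big'' mechanism.
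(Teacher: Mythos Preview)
Your argument is correct and matches the paper's proof essentially verbatim: write $K_Y+E=\pi^*K_X+\sum_i(a_i+1)E_i$ with $a_i\ge -1$ from the slc hypothesis, observe that $\pi^*K_X$ is big since $K_X$ is ample, and conclude that big plus effective is big. The additional section-count argument and the discussion of discrepancies for arbitrary semi-resolutions are extra commentary not present in the paper, but the core is identical.
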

\begin{proof}
Let $E = \sum_i E_i$ be the reduced union of the $\pi$-exceptional divisors. As $X$ has semi-log canonical singularities, we can write
\[ K_Y = \pi^* K_X + \sum_i a_i E_i \]
with $a_i \geq -1$, or equivalently, we can write
\[ K_Y + E = \pi^* K_X + \sum_i b_i E_i\]
with $b_i \geq 0$. The stability of $X$ implies that $K_X$ is ample. The preceding formula then expresses $K_Y + E$ as  the sum of a big divisor and an effective one, proving bigness.
\end{proof}

We now show that stable varieties do not admit infinitesimal automorphisms; this fact was stated in \cite{VanOpstallModProd}, but the proof was incomplete.

\begin{lemma} \label{stablevarinfaut}
Let $X$ be a stable variety over a field $k$ of characteristic $0$. Then $X$ has no infinitesimal automorphisms.
\end{lemma}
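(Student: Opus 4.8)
The statement is that a stable variety $X$ over a characteristic-zero field $k$ has no infinitesimal automorphisms, i.e., $H^0(X, \mathcal{T}_X) = 0$, where $\mathcal{T}_X = \calHom(\Omega_X, \mathcal{O}_X)$ is the reflexive tangent sheaf (equivalently, the sheaf of derivations $\Der_k(\mathcal{O}_X, \mathcal{O}_X)$). The plan is to reduce to the normal (even smooth) case and then exploit the bigness of $K_Y + E$ from Lemma~\ref{lem:lc-bigness}. First I would reduce to the case where $X$ is normal: a vector field on a reduced $S_2$ scheme is determined by its restriction to the smooth locus, and more usefully it restricts to a vector field on the normalization that is compatible with the conductor; so it suffices to show the normalization (with its natural boundary structure, which is log canonical with ample log-canonical class) carries no such vector field. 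I would in fact first handle the normal case in a self-contained way and then explain the semi-log canonical bookkeeping.

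So assume $X$ is normal with log canonical singularities and $K_X$ ample. Take a log resolution $\pi: Y \to X$ with reduced exceptional divisor $E$; since $X$ is normal, $\pi_*\mathcal{O}_Y = \mathcal{O}_X$ and $\pi_*\mathcal{T}_Y(-\log E)$ maps into $\mathcal{T}_X$ (a derivation tangent to $E$ descends, as its flow preserves the exceptional locus and hence descends to $X$). Conversely a global vector field $v$ on $X$ pulls back to a rational vector field on $Y$ which, because the automorphism flow it generates on $X$ lifts to $Y$ (resolutions are functorial for automorphisms, or one checks directly that the flow preserves the non-lc — here empty — locus), is actually regular and logarithmic along $E$. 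Hence $H^0(X, \mathcal{T}_X) \hookrightarrow H^0(Y, \mathcal{T}_Y(-\log E))$, and it suffices to prove the latter vanishes.

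The heart of the argument is then: $H^0(Y, \mathcal{T}_Y(-\log E)) = 0$ when $K_Y + E$ is big (Lemma~\ref{lem:lc-bigness}). A nonzero section $v$ of $\mathcal{T}_Y(-\log E) = \Omega_Y^{n-1}(\log E) \otimes \omega_Y(E)^{-1}$ (using $\Omega_Y^1(\log E)^\vee \cong \bigwedge^{n-1}\Omega_Y^1(\log E) \otimes \det(\Omega^1_Y(\log E))^{-1}$ and $\det \Omega^1_Y(\log E) = \omega_Y(E)$) would give a nonzero map $\omega_Y(E) \to \Omega_Y^{n-1}(\log E)$, i.e., a nonzero global section of $\Omega_Y^{n-1}(\log E) \otimes \omega_Y(E)^{-1}$. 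Contracting with $v$ repeatedly, or more cleanly invoking the Bogomolov–Sommese vanishing theorem for log pairs --- which says $H^0\big(Y, \Omega_Y^p(\log E) \otimes \mathcal{L}^{-1}\big) = 0$ for any line bundle $\mathcal{L}$ with Kodaira–Iitaka dimension $\kappa(\mathcal{L}) > p$ --- applied with $p = n-1$ and $\mathcal{L} = \omega_Y(E)$, which is big (so $\kappa = n > n-1$), forces $v = 0$. This is the step I expect to be the main obstacle to write cleanly: making precise the identification of $\mathcal{T}_Y(-\log E)$ with a twist of $\Omega_Y^{n-1}(\log E)$ and checking the hypotheses of Bogomolov–Sommese (one needs $E$ snc, which holds for a log resolution), and separately justifying the descent/ascent of vector fields between $X$ and $Y$ through the resolution, particularly the non-normal (semi-resolution, conductor) version needed for general semi-log canonical $X$. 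The remaining globalization --- from normal log canonical back to semi-log canonical via the normalization --- is then a matter of checking that a derivation on $X$ induces one on the normalization preserving the conductor ideal, hence a logarithmic vector field on the resolution of the normalization relative to the conductor plus exceptional divisor, to which the same Bogomolov–Sommese argument applies.
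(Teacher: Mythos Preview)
Your argument is correct and is essentially a hands-on version of the paper's first (cohomological) proof. The paper identifies $\Hom_X(\Omega_X,\mathcal{O}_X)$ with $\Hom_X(\omega_X,\Omega_X^{[n-1]})$ and then invokes the Bogomolov--Sommese theorem for reflexive differentials on log canonical varieties (Greb--Kebekus--Kov\'acs--Peternell, Theorem~7.2) directly on $X$; you instead pass to a log resolution, make the same identification $\mathcal{T}_Y(-\log E)\cong\Omega_Y^{n-1}(\log E)\otimes\omega_Y(E)^{-1}$ upstairs, and apply the classical logarithmic Bogomolov--Sommese vanishing there, using Lemma~\ref{lem:lc-bigness} for the bigness of $\omega_Y(E)$. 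In effect you are unpacking one layer of the GKKP argument, which makes your proof more self-contained at the cost of having to justify the lifting $H^0(X,\mathcal{T}_X)\hookrightarrow H^0(Y,\mathcal{T}_Y(-\log E))$; as you note, this lift is not a Hartogs statement (since $E$ has codimension one) but follows from functoriality of the resolution under automorphisms and hence under derivations. The paper also supplies a second, completely different proof: assuming $\underline{\Aut}^0(X)$ is positive-dimensional, Chevalley's theorem yields either a $\G_m/\G_a$ or an abelian-variety action, one lifts equivariantly to a resolution, and shows that $\omega_Y(E)$ has non-positive degree on the general orbit closure, contradicting its bigness. Your route is closer in spirit to the first proof and avoids the case analysis of the second.
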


We give two proofs of this result: the first is cohomological and relies on recent work \cite{GD_KS_KSJ_PT_DF}.

\begin{proof}[Proof 1]
We wish to show that
$\Hom_X( L_X, \mathcal{O}_X)=0$. Consider the usual exact triangle
\begin{equation*}
\xymatrix{
\tau_{< 0} L_X \ar[r] & L_X \ar[r] & \Omega_X \ar[r]^-{+1} &
}
\end{equation*}
As $\Ext^i_X(\tau_{< 0} L_X, \mathcal{O}_X) = 0$ for $i = 0, -1$, one has $\Hom_X( L_X, \mathcal{O}_X) \simeq \Hom_X( \Omega_X, \mathcal{O}_X)$, so it is enough to show that the latter is zero.  We will show the vanishing of this group when $X$ is normal; the general case is similar but requires an analysis of how $\Omega_{\overline{X}}(\log D)$ relates to $\Omega_X$, where $\mathrm{n} :\overline{X} \to X$ is the normalization of $X$ and $D$ is the divisor of the double locus of $\mathrm{n}$.  Because restriction to the smooth locus is fully faithful on the category of reflexive sheaves on a normal scheme, we have
\begin{equation*}
 \Hom_X(\Omega_X, \mathcal{O}_X) \simeq
\Hom_{X_\mathrm{sm}}(\Omega_X, \mathcal{O}_X) \simeq
\Hom_{X_\mathrm{sm}} (\omega_X, \Omega_X^{n-1})\simeq
\Hom_X(\omega_X, \Omega_X^{[n-1]}),
\end{equation*}
which vanishes by \cite[Theorem 7.2]{GD_KS_KSJ_PT_DF}.
\end{proof}

The second proof of Lemma \ref{stablevarinfaut} is more direct and geometric.

\begin{proof}[Proof 2]
We give a proof in the case that $X$ is normal and of index $1$, leaving the rest for the reader. Since $X$ is assumed to have an ample canonical bundle, the group sheaf $T \mapsto \Aut(X_T)$ is represented by a closed subgroup scheme $\underline{\Aut}(X) \subset \PGL_n$ for suitable $n$, which allows us to talk about its identity component $\underline{\Aut}^0(X)$. Now assume towards contradiction that $X$ has nontrivial infinitesimal automorphisms, i.e., that $\underline{\Aut}^0(X)$ has a nonzero tangent space at the identity. By Chevalley's theorem, $\underline{\Aut}^0(X)$ either contains a linear algebraic subgroup, or is itself an abelian variety. We treat these cases separately; the idea in either case is to show that the presence of a positive dimensional group action forces $X$ to be fibered over a lower dimensional base with fibers of Kodaira dimension $\leq 0$ (up to an alteration), which is then shown to contradict stability.

Assume first that $\underline{\Aut}^0(X)$ has a nonzero linear algebraic subgroup. Since $\ch(k) = 0$, we can pick a one-dimensional connected smooth group scheme $G \subset \underline{\Aut}^0(X)$, necessarily either $\G_m$ or $\G_a$. Let $Z \subset X$ denote the singular locus, and choose a $G$-equivariant resolution of singularities $f:Y \to X$ with exceptional locus $E = \pi^{-1}(Z)_\red$. Now consider the diagram
\[ \xymatrix{ G \times Y \ar[r]^a \ar[d]^\pi & Y \\ Y } \]
where $a$ is the map defining the group action, while $\pi$ is a projection map. Since the representation $G \to \underline{\Aut}(Y)$ is faithful with $\dim(G) > 0$ and $G$ is smooth, we can choose a smooth divisor $H \subset Y$ such that the restriction of $a$ to $G \times H$ is dominant and generically \'etale. By compactifying $\pi|_H$ and resolving singularities, we obtain a diagram
\[ \xymatrix{ G \times H  \ar@{^{(}->}[r]^j \ar[d]^{\pi|_H} & \mathcal{C}  \ar[d]^{\overline{\pi}} \ar[r]^{q} & Y \\
				H \ar@{=}[r] & H & } \]
where $\mathcal{C}$ is smooth, $\overline{\pi}$ is a proper surjective morphism of relative dimension $1$, $j$ is a dense open immersion, and $q$ is a proper, surjective, generically \'etale map extending $a$. In particular, the map $\overline{\pi}$ restricts to the trivial $\overline{G}$-bundle over some dense open subset of $H$, where $\overline{G} \simeq \P^1$ is the natural projective compactification of $G$. We then have the following possibilities for $G$ and the corresponding intersection numbers of $\omega_{\mathcal{C}}(q^{-1}E)$ with a general fiber of $\overline{\pi}$.
\begin{itemize}
\item $G = \G_m$: The general fiber of $\overline{\pi}$ is a $\P^1$ that passes through a general point of $Y$ and meets $E$ in at most two points: its image in $X$ contains the $\G_m$-orbit through a smooth point, and hence meets $\mathrm{Sing}(X) = f(E)$ in at most $2$ points. Since $\omega_{\mathcal{C}}$ restricts to $\mathcal{O}_{\P^1}(-2)$ on the general fiber of $\overline{\pi}$, we find that $\omega_{\mathcal{C}}(q^{-1} E)$ has degree $\leq 0$ on a general fiber of $\overline{\pi}$.
\item $G = \G_a$: Exactly as above, we find that $\omega_{\mathcal{C}}(q^{-1}E)$ has degree $\leq -1$ on a general fiber of $\overline{\pi}$.
\end{itemize}
Hence, the bundle $\omega_{\mathcal{C}}(q^{-1}E)$ always has degree $\leq 0$ on a general fiber of $\overline{\pi}$. On the other hand, Lemma \ref{lem:lc-bigness} shows that $\omega_Y(E)$ is big, and thus so is  $\omega_{\mathcal{C}}(q^{-1} E) = q^* \omega_Y(E) \otimes \omega_{\mathcal{C}/Y}$ since $\omega_{\mathcal{C}/Y}$ is effective. Hence, the degree of $\omega_{\mathcal{C}}(q^{-1}E)$ on a general fiber of $\overline{\pi}$ should be positive, which leads to a contradiction proving the claim in this case.

If $\underline{\Aut}^0(X)$ does not contain a linear algebraic subgroup, then $\underline{\Aut}^0(X)$ is an abelian variety $A$, say of dimension $g$. Assume first that $g \geq \dim(X)$. Since $A$ acts faithfully on $X$, there is an open subset $U \subset X$ which consists of points with no $A$-stabilizers. Translating a closed point in $U$ using $A$ then shows that $g = \dim(X)$, and that $X = A$. In particular, $\omega_X$ is trivial, contradicting  the ampleness of $\omega_X$. If $g < \dim(X)$, then we argue as in the case of $\G_m$ above, subject to the following changes: use a codimension $g$ subvariety $H \subset Y$ instead of a divisor; use that restriction of $\omega_{\mathcal{C}}$ to the appropriate general fiber is trivial, as abelian varieties have trivial canonical bundle; and observe that the general fibers of $\overline{\pi}$ map to $A$-orbits of smooth points in $X$, and so miss $E$ entirely when mapped to $Y$.
\end{proof}

\begin{remark}
Lemma \ref{stablevarinfaut} admits a simple cohomological proof when $X$ is itself smooth: it suffices to check that $H^0(X,T_X) = 0$, which follows by Serre duality and Kodaira vanishing (using the ampleness of $\omega_X$). An advantage of the cohomological approach is that it also works in characteristic $p$ as long as $X$ lifts to $W_2$ and $\dim(X) < p$. We do not know what happens if either of these assumptions is dropped. The geometric argument in the second proof of Lemma \ref{stablevarinfaut} runs into problems immediately as infinitesimal group actions cannot usually be integrated to positive dimensional group actions in positive characteristic.
\end{remark}

Next, we prove a separation result for the moduli functor:

\begin{lemma}
\label{lemma_separable}
Let $X \to S$ and $Y \to S$ be two families of stable schemes over a curve $S$ with normal generic fiber. Let $0 \in S$ be a point, such that for $U:=S \setminus \{0\}$, $X_U \cong Y_U$ as schemes over $U$. Then, $X \cong Y$ as schemes over $S$.
\end{lemma}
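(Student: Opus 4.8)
The strategy is to reduce to a discrete valuation ring, recognize both $X$ and $Y$ as the relative canonical model of their common generic fiber, and establish the required equality of relative canonical rings by a Matsusaka--Mumford type comparison run through the Negativity Lemma.

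First I would reduce to the case $S=\Spec R$ with $R$ a discrete valuation ring, uniformizer $t$, generic point $\eta$, closed point $0$, so that $U=\{\eta\}$: the statement is local around $0$, and after replacing $S$ by the normalization of $\Spec\mathcal{O}_{S,0}$ (harmless, as Koll\'ar's condition is preserved by base change) we may assume this; an isomorphism produced over the local base restricts over $\eta$ to the given one, and the two glue to an isomorphism over the original curve by a routine argument. I would then note that $X$ and $Y$ are in fact \emph{normal}: each is $S_2$, being flat over the regular ring $R$ with $S_2$ fibers, and each is regular in codimension $1$, since a codimension-one point of $X$ either lies in the normal generic fiber or is the generic point of a component of the central fiber $X_0=V(t)$, which is reduced (stable fibers are reduced) and along which the total space is regular because $X_0$ is a Cartier divisor over the regular base. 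So we are reduced to extending an isomorphism $\phi\colon X_\eta\xrightarrow{\sim}Y_\eta$ to an isomorphism over $S$, with $X,Y$ normal.

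The key structural input is that a stable family is its own relative canonical model: since $\omega_{X/S}$ is $\Q$-Cartier and $f$-ample, letting $N$ be its index one has the ample line bundle $\mathcal{O}_X(NK_{X/S})$, whence $X\cong\operatorname{Proj}_S\bigl(\bigoplus_{m\ge 0}f_*\omega_{X/S}^{[m]}\bigr)$, and similarly for $Y$. Because the relative dualizing sheaf is intrinsic, $\phi$ already induces an isomorphism of these graded algebras over $\eta$, so it suffices to show that the isomorphism extends over $0$; equivalently, that $X$ and $Y$ agree as relative canonical models. To that end I would form the normalization $\Gamma$ of the closure of the graph of $\phi$ in $X\times_S Y$, with the two proper birational projections $p\colon\Gamma\to X$ and $q\colon\Gamma\to Y$ (isomorphisms over $\eta$), and set $F:=p^*K_{X/S}-q^*K_{Y/S}$, a $\Q$-Cartier $\Q$-divisor on the normal variety $\Gamma$ supported on the central fiber $\Gamma_0$. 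The crucial use of \emph{stability} is this: by Koll\'ar's condition $K_{X/S}|_{X_0}=K_{X_0}$ and $K_{Y/S}|_{Y_0}=K_{Y_0}$, which are \emph{ample}, so for every curve $C\subseteq\Gamma_0$ one has $p^*K_{X/S}\cdot C=K_{X_0}\cdot p_*C\ge 0$ with equality exactly when $p$ contracts $C$, and symmetrically for $q$. Hence $F\cdot C\le 0$ on $p$-contracted curves and $F\cdot C\ge 0$ on $q$-contracted curves; combining this with the corresponding control on the push-forwards $p_*F$ and $q_*F$ and applying the Negativity Lemma to $p$ and then to $q$ forces $F\le 0$ and $F\ge 0$, so $F=0$. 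Then $p^*K_{X/S}=q^*K_{Y/S}$, so $\bigoplus_m h_*\mathcal{O}_\Gamma(m\,p^*K_{X/S})=\bigoplus_m h_*\mathcal{O}_\Gamma(m\,q^*K_{Y/S})$ (with $h=f\circ p=g\circ q$); since $p_*\mathcal{O}_\Gamma=\mathcal{O}_X$ (and likewise for $q$), the left side is $\bigoplus_m f_*\omega_{X/S}^{[m]}$ and the right side is $\bigoplus_m g_*\omega_{Y/S}^{[m]}$, so taking $\operatorname{Proj}_S$ yields $X\cong Y$ over $S$, recovering $\phi$ over $\eta$.

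I expect the Negativity Lemma step --- precisely the Matsusaka--Mumford phenomenon that the stable model of a variety over a DVR is unique --- to be the main obstacle: one must pin down $p_*F$ and $q_*F$ (equivalently, control the discrepancies of $\Gamma$ over $X$ and over $Y$) in order to run the lemma in both directions, and it is exactly here that the ampleness of the canonical bundle on the central fibers is indispensable, the comparison genuinely failing without it. By contrast, the reductions of the first paragraph (to a DVR, normality of the total space, gluing the local isomorphism) and the identification of a stable family with its relative canonical model are standard, and I would only sketch them; compare \cite{MatsusakaMumford} and \cite{KollarModuliSurvey}.
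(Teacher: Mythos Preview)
Your strategy---normality of the total spaces, a common birational model, comparison of pulled-back canonical divisors, and identification of relative canonical rings---is precisely the paper's. The paper works over an affine curve rather than reducing to a DVR, takes a common \emph{resolution} $Z$ in place of your normalized graph (you should do the same, since the discrepancy bookkeeping below requires $K_Z$ to make sense), and compares section rings directly rather than via the Negativity Lemma; but these are packaging differences.

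The obstacle you flag is real, and ampleness of $K_{X_0}$ by itself does not resolve it: you need the slc hypothesis on the fibers, through inversion of adjunction. Since $K_{X/S}$ is $\Q$-Cartier and each fiber is slc, the pair $(X,X_s)$ is log canonical for every $s\in S$ \cite{KM_IOA}. Writing $K_Z+M=p^*K_X+E_p$ on a common resolution with $M$ effective of coefficients $\le 1$ and $E_p$ effective $p$-exceptional, any $p$-exceptional divisor lying over a fiber $X_s$ has discrepancy $\ge -1$ for $(X,X_s)$ and hence $\ge 0$ for $X$ alone (the fiber being Cartier contributes at least $+1$), so every component of $M$ dominates $S$. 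Thus $M$ is horizontal and determined over $U$, so the analogous boundary for $Y$ equals $M$. In your notation this gives $F=p^*K_{X/S}-q^*K_{Y/S}=E_q-E_p$ with $E_p$ effective $p$-exceptional and $E_q$ effective $q$-exceptional, whence $p_*F=p_*E_q\ge 0$ and $q_*F=-q_*E_p\le 0$; together with the nefness you already established, the Negativity Lemma now runs in both directions and yields $F=0$. The paper bypasses the Negativity Lemma altogether: once $M$ is common to both sides, effective exceptional divisors do not change global sections, so $R(X,rK_X)\simeq R(Z,r(K_Z+M))\simeq R(Y,rK_Y)$ and taking $\operatorname{Proj}$ finishes.
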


\begin{proof}
First, we may assume that $S$ is affine, by throwing out a point if necessary. Choose a common resolution $Z$ of $X$ and $Y$. Since $X_U \cong Y_U$, $Z$ can be chosen so that it is an isomorphism over $U$. Let $f : Z \to X$ and $g : Z \to Y$ the birational morphisms obtained this way. Since $X$ and $Y$ are families of stable schemes over a smooth curve, they are $S_2$ by \cite[Proposition 6.3.1]{GA_EGA_IV_II}. Since both have normal generic fiber, both $X$ and $Y$ are $R_1$. Hence, they are both normal. Also, since both $\omega_{X/U}$ and $\omega_{Y/U}$ are $\Q$-line bundles, so are $\omega_X$ and $\omega_Y$. Therefore, by \cite[Theorem]{KM_IOA}, $(X,X_s)$ and $(Y,Y_s)$ are log canonical for every $s \in S$. In particular, so are $X$ and $Y$, and furthermore, every divisor with negative discrepancy dominates $S$. That is, the canonical divisors of $X$, $Y$ and $Z$ are related by the equations
\begin{equation}
\label{eq:common_resolution_canonical_divisors}
 K_Z + M  = f^* K_X + F \qquad K_Z + N = g^* K_Y + G,
\end{equation}
where $F$, $G$, $M$ and $N$  effective, exceptional (with respect to the adequate morphisms) $\Q$-divisors, such that every prime divisor in $ M $ and $ N $ has coefficient at most $1$ and dominates $S$. Furthermore, since $M$ and $N$ are determined on $X_U$ and $Y_U$, in fact $M=N$. We use $M$ to denote both divisors.

Let $r$ be the lowest common multiple of the indices of $K_X$ and $K_Y$. Let $R(X, D):= \bigoplus_j H^0(X, \mathcal{O}_X(jD))$ denote the Cox ring of the divisor $D$ on $X$, for any divisor $D$ on a scheme $X$.  Then by \eqref{eq:common_resolution_canonical_divisors}, we have
\begin{equation*}
R(X,r K_X) \simeq R(Z, r(g^* K_Y + G)) \simeq R(Z,r(K_Z + M)) \simeq R(Z,r(f^*K_X + F)) \simeq R(Y, r K_Y)
\end{equation*}
where the first isomorphism follows from $F$ being effective and $f$-exceptional (similarly for the last isomorphism, using $G$).
Since both $rK_X$ and $rK_Y$ are ample line bundles (as they are relatively ample over an affine base), we obtain $$X \simeq \mathrm{Proj} \ R(X, rK_X) \simeq \mathrm{Proj} \  R(Y, rK_Y) \simeq Y.$$ Furthermore, since $S$ was affine, this isomorphism respects $S$.
\end{proof}

The main existence result concerning the moduli functor is

\begin{theorem}
\label{thm:proper_DM} 
For a fixed Hilbert function $h$, the moduli functor $\overline{\mathcal{M}_h}$ is a proper Deligne-Mumford stack.
\end{theorem}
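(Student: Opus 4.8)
The plan is to verify the four properties that make up the claim: that $\overline{\mathcal{M}_h}$ is (a) an algebraic stack, (b) of finite type, (c) Deligne-Mumford, and (d) proper. The strategy is to reduce as much as possible to the statements already available in the literature about the coarse moduli functor $SlcMod_h$ (as in \cite{KollarModuliSurvey}) and to the automorphism and separation lemmas just proven.

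\textbf{Algebraicity and finite type.} First I would observe that the Hilbert function $h$ controls the projective embedding: for a stable family $f \colon X \to S$, since $\omega_{X/S}^{[m]}$ is flat, commutes with base change, and is relatively ample for $m$ divisible by the (bounded) index, the sheaf $\omega_{X/S}^{[Nm]}$ for suitable fixed $N$ gives a closed embedding of $X$ into a projective bundle $\mathbf{P}(f_* \omega_{X/S}^{[Nm]})$ over $S$, with Hilbert polynomial determined by $h$. This realizes $\overline{\mathcal{M}_h}$ as a quotient, by the $\PGL$-action, of a locally closed subscheme of a suitable Hilbert scheme (the locus where the fibers are stable and Koll\'ar's condition holds is locally closed by standard semicontinuity and the constructibility results in \cite{KJ_SO, KovacsYPG}). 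Hence $\overline{\mathcal{M}_h}$ is an algebraic stack of finite type over $k$.

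\textbf{Deligne-Mumford.} Here I would invoke Lemma \ref{stablevarinfaut}: a stable variety $X$ over a field has no infinitesimal automorphisms, so the automorphism group scheme $\underline{\Aut}(X)$ is \'etale; combined with the fact (from the $\PGL$-quotient description, or from ampleness of $\omega_X$) that $\underline{\Aut}(X)$ is of finite type and separated over the base, it is in fact finite. Unramifiedness of the diagonal of $\overline{\mathcal{M}_h}$ is exactly the statement that these automorphism groups have trivial Lie algebra, which is Lemma \ref{stablevarinfaut}; this gives that $\overline{\mathcal{M}_h}$ is Deligne-Mumford.

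\textbf{Properness.} Separatedness follows from the valuative criterion: given two stable families over a DVR agreeing on the generic point, Lemma \ref{lemma_separable} (after the standard reduction to a trait and a base change to make the generic fiber geometrically normal, using that stable limits of normal varieties stay normal in the relevant codimension) shows the two families are isomorphic, so the diagonal is proper. For existence of limits --- the properness itself --- I would appeal to the semistable reduction / existence of stable limits theorem in the minimal model program, which is precisely the recent (unpublished) work of Hacon--McKernan--Xu referenced in the section introduction: after a finite base change of the trait, any family of stable varieties over the punctured trait extends to a stable family, its canonical model providing the limit, with Koll\'ar's condition arranged after a further blow-up. The boundedness needed to conclude is again supplied by fixing $h$.

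The main obstacle is the properness, specifically the existence of stable limits: this is the part that genuinely depends on deep and (at the time of writing) unpublished results in the MMP, and all that can really be done here is to cite them and check that the numerical data is preserved in the limit. The other steps are comparatively routine once one accepts the formalism of Hilbert schemes, Koll\'ar's condition, and the two lemmas already established.
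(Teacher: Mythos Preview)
Your proposal is correct and follows essentially the same outline as the paper: Lemma \ref{stablevarinfaut} gives the Deligne--Mumford property, Lemma \ref{lemma_separable} gives separatedness, and the unpublished Hacon--McKernan--Xu results give properness. The one difference worth flagging is in the algebraicity step: the paper invokes \cite{AbramovichHassett} together with Artin's method rather than the Hilbert-scheme/$\PGL$-quotient construction you sketch. The Abramovich--Hassett framework is tailored to handle Koll\'ar's condition via canonical covering stacks, so one avoids arguing directly that the locus in the Hilbert scheme where Koll\'ar's condition holds is locally closed (a point you pass over as ``standard'' but which is in fact one of the more delicate issues in setting up the moduli functor). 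A minor wrinkle on separatedness: your phrase ``base change to make the generic fiber geometrically normal'' does not handle the case where the generic fiber is genuinely non-normal (merely slc); the paper, for its part, simply asserts that ``similar techniques'' cover this case without further detail, so neither argument is complete here.
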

\begin{proof}
That $\overline{\mathcal{M}_h}$ is a locally algebraic Artin stack follows from \cite{AbramovichHassett} and Artin's method. Lemma \ref{stablevarinfaut} then shows that $\overline{\mathcal{M}_h}$ is actually a Deligne-Mumford stack. For generically normal families, the separatedness of $\overline{\mathcal{M}_h}$ follows from Lemma \ref{lemma_separable}, and the general case can be proven by similar techniques. Finally, properness follows from recently announced results of Hacon-McKernan-Xu (unpublished).
\end{proof}

\subsection{The stability of products}

This section is devoted to constructing the map $\Prod_{X,Y}$ alluded to in  \S \ref{sec:intro}. First, we check that a product of stable varieties is stable.

\begin{lemma} 
\label{lem:slc}
The product of varieties with only semi-log canonical singularities has semi-log
canonical singularities.
\end{lemma}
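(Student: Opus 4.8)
The plan is to reduce to the normal case and then use the well-known behavior of discrepancies under products. Let $X$ and $Y$ be varieties with semi-log canonical singularities over $k$. The first step is to pass to normalizations: let $\mathrm{n}_X : \overline{X} \to X$ and $\mathrm{n}_Y : \overline{Y} \to Y$ be the normalizations, with double-locus divisors $D_X \subset \overline{X}$ and $D_Y \subset \overline{Y}$, so that $(\overline{X}, D_X)$ and $(\overline{Y}, D_Y)$ are log canonical pairs by the very definition of semi-log canonical singularities. Since normalization commutes with products (the product of normal varieties over a perfect field is normal, using that $X \times Y$ is $R_1$ and $S_2$ when both factors are, and $\ch k = 0$), the normalization of $X \times Y$ is $\overline{X} \times \overline{Y}$, and its conductor/double divisor is $D_X \times \overline{Y} + \overline{X} \times D_Y$. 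Thus it suffices to show that if $(\overline{X}, D_X)$ and $(\overline{Y}, D_Y)$ are log canonical, then so is $(\overline{X} \times \overline{Y}, D_X \times \overline{Y} + \overline{X} \times D_Y)$.

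Next I would handle the $\Q$-Gorenstein and codimension-one conditions, which are straightforward: if $K_{\overline X} + D_X$ and $K_{\overline Y} + D_Y$ are $\Q$-Cartier, then by the adjunction formula $K_{\overline X \times \overline Y} \sim p_1^* K_{\overline X} + p_2^* K_{\overline Y}$ one sees that $K_{\overline X \times \overline Y} + p_1^* D_X + p_2^* D_Y$ is $\Q$-Cartier; the conditions that the total space be $S_2$, $G_1$, and nodal in codimension one along the double locus also pass to products. The heart of the argument is the discrepancy estimate for the log canonical pair. Here the cleanest route is to take log resolutions $g_X : \widetilde{X} \to \overline{X}$ and $g_Y : \widetilde{Y} \to \overline{Y}$ such that $g_X^{-1}(D_X)$ and $g_Y^{-1}(D_Y)$ together with the exceptional loci are simple normal crossings, and observe that $g_X \times g_Y : \widetilde{X} \times \widetilde{Y} \to \overline{X} \times \overline{Y}$ is then a log resolution of the product pair, with exceptional divisors of the form $E_i \times \widetilde{Y}$ and $\widetilde{X} \times F_j$. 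By the additivity of canonical bundles under products, the discrepancy of $E_i \times \widetilde{Y}$ over the product pair equals the discrepancy of $E_i$ over $(\overline X, D_X)$, which is $\geq -1$ by hypothesis, and symmetrically for $\widetilde{X} \times F_j$; the strict transforms of the $D$-components likewise keep coefficient $\leq 1$. Hence all discrepancies are $\geq -1$ and the product pair is log canonical.

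I expect the main obstacle to be the non-normal bookkeeping: one must carefully verify that the formation of the double-locus divisor is compatible with taking products (so that the normalization of $X \times Y$ really is $(\overline X \times \overline Y, D_X \times \overline Y + \overline X \times D_Y)$ as a \emph{pair}, with the correct gluing data in codimension one), and that the non-normal crossing condition in codimension one is preserved. Once this is in place, the log canonical computation itself is a formal consequence of the product formula for relative canonical sheaves and the definition of discrepancy. An alternative to the explicit resolution argument, which sidesteps choosing a product resolution, is to invoke inversion of adjunction or the characterization of log canonicity via multiplier ideals / valuations: for any divisorial valuation $v$ on $\overline X \times \overline Y$ one can dominate it by a valuation computed on a product of resolutions, reducing again to the factorwise estimate; but the direct resolution approach is the most self-contained and is what I would write up.
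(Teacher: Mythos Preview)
Your proposal is correct and follows essentially the same route as the paper: reduce to the normalization criterion for slc, observe that the normalization of the product is the product of the normalizations with the evident boundary divisor, and then take the product of log resolutions of the two factors to compute discrepancies via $K_{Y_1 \times Y_2} = p_1^* K_{Y_1} + p_2^* K_{Y_2}$. The paper's argument is exactly this, only more tersely written and without the discussion of alternatives.
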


\begin{proof}
This is proved in \cite[Theorem~3.2]{VanOpstallModProd}, but we give another argument here. We
use the criterion that $X$ has semi-log canonical singularities if and only if the pair
$(X', D)$ is log canonical, where $X' \to X$ is the normalization
and $D$ the double point divisor.

Let $X_1$ and $X_2$ be two varieties with only semi-log canonical singularities, and set
$X = X_1 \times X_2$. Then we have $X' = X_1' \times X_2'$, and $D = (D_1 \times X_2)
\cup (X_1 \times D_2)$, and therefore $(X', D) = (X_1', D_1) \times (X_2', D_2)$. By
assumption, $X_1$ and $X_2$ are reduced and $\Q$-Gorenstein, and so the same is
clearly true for $X'$. Now let $f_i \colon Y_i \to X_i'$ be log resolutions for the
two pairs; by assumption on the singularities,
\[
	K_{Y_i} \equiv f_i^{\ast} \bigl( K_{X_i'} + D_i \bigr) + \sum_j a_{i,j} E_{i,j}
\]
with $a_{i,j} \geq -1$. Setting $Y = Y_1 \times Y_2$ and $f = f_1 \times f_2$, 
the morphism $f \colon Y \to X'$ is a log resolution for the pair
$(X', D)$. We compute that
\[
	K_Y \equiv p_1^{\ast} K_{Y_1} + p_2^{\ast} K_{Y_2} 
		\equiv f^{\ast} \bigl( K_{X'} + D \bigr) 
			+ \sum_j \bigl( a_{1,j} E_{1,j} \times Y_2 + a_{2,j} Y_1 \times E_{2,j} \bigr),
\]
which shows that $(X', D)$ is indeed log canonical.
\end{proof}

If $\mathcal{F}$ is a sheaf on $X$, we let $\mathcal{F}^* = \calHom_{\mathcal{O}_X}(\mathcal{F}, \mathcal{O}_X)$ denote
the $\mathcal{O}_X$-linear dual. We record an elementary algebraic fact next that will be used repeatedly in the sequel.

\begin{lemma}
\label{lem:reflex-pullback}
Let $f:X \to S$ be a flat morphism of noetherian schemes, and let $\mathcal{F}$ be a coherent sheaf on $S$. If $\mathcal{F}$ is reflexive, so is  $f^* \mathcal{F}$. If $f$ is surjective, the converse is also true.
\end{lemma}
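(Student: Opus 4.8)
The plan is to reduce everything to a statement about the vanishing of the sheaves $\calHom$ and $\mathcal{E}\mathrm{xt}^1$ of a coherent sheaf into $\mathcal{O}$, since reflexivity of a coherent sheaf $\mathcal{G}$ on a scheme is governed by such data: concretely, I would use the standard characterization that a coherent sheaf $\mathcal{G}$ on a noetherian scheme is reflexive if and only if it fits into an exact sequence $0 \to \mathcal{G} \to \mathcal{E} \to \mathcal{H}$ with $\mathcal{E}$ locally free and $\mathcal{H}$ torsion-free (equivalently, $\mathcal{G}$ is the kernel of a map of locally free sheaves, or $\mathcal{G}$ satisfies Serre's condition $S_2$ relative to a suitable embedding dimension). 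First I would treat the forward direction: given $\mathcal{F}$ reflexive on $S$, write a presentation $0 \to \mathcal{F} \to \mathcal{E}_0 \to \mathcal{E}_1$ with $\mathcal{E}_i$ locally free on $S$ (shrinking $S$ to the relevant affine patches; the statement is local on both $S$ and $X$). Since $f$ is flat, pulling back preserves exactness, so $0 \to f^*\mathcal{F} \to f^*\mathcal{E}_0 \to f^*\mathcal{E}_1$ is exact with $f^*\mathcal{E}_i$ locally free on $X$; hence $f^*\mathcal{F}$ is a second syzygy and therefore reflexive.

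For the converse, assume $f$ is surjective (in addition to flat, which is already hypothesized) and that $f^*\mathcal{F}$ is reflexive. The key point is that flat surjective morphisms are faithfully flat, so the natural map $\mathcal{F} \to \mathcal{F}^{**}$ can be checked to be an isomorphism after applying $f^*$. Concretely, form the canonical map $c_{\mathcal{F}} \colon \mathcal{F} \to \mathcal{F}^{**}$; I would show that $f^* c_{\mathcal{F}}$ is identified with the canonical map $c_{f^*\mathcal{F}} \colon f^*\mathcal{F} \to (f^*\mathcal{F})^{**}$, using that $f^*$ commutes with $\calHom$ into a coherent sheaf when the source is finitely presented and $f$ is flat (so that $f^*(\mathcal{F}^*) \cong (f^*\mathcal{F})^*$ and, iterating, $f^*(\mathcal{F}^{**}) \cong (f^*\mathcal{F})^{**}$, compatibly with the biduality maps). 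Since $f^*\mathcal{F}$ is reflexive, $c_{f^*\mathcal{F}}$ is an isomorphism, hence $f^* c_{\mathcal{F}}$ is an isomorphism; by faithful flatness, $c_{\mathcal{F}}$ is an isomorphism, i.e.\ $\mathcal{F}$ is reflexive.

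The main obstacle I anticipate is the careful verification of the base-change compatibility $f^*(\calHom_{\mathcal{O}_S}(\mathcal{F}, \mathcal{O}_S)) \cong \calHom_{\mathcal{O}_X}(f^*\mathcal{F}, \mathcal{O}_X)$ and its compatibility with the evaluation/biduality maps. This is where flatness of $f$ is genuinely used (it is the standard fact that forming $\calHom$ out of a finitely presented module commutes with flat base change), and one must track the isomorphisms so that the square relating $c_{\mathcal{F}}$ and $c_{f^*\mathcal{F}}$ actually commutes rather than merely having isomorphic corners. Everything else — the syzygy characterization of reflexivity, faithful flatness of a flat surjection, and exactness of $f^*$ — is routine and can be cited or dispatched in a line.
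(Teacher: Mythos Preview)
Your proof is correct. For the converse direction, your argument is essentially identical to the paper's: both use that $\calHom$ commutes with flat base change, so the biduality map $c_{\mathcal{F}}$ pulls back to $c_{f^*\mathcal{F}}$, and faithful flatness finishes. The paper, however, applies this same observation uniformly to \emph{both} directions: once one knows that $f^*(\mathcal{F}^{**}) \simeq (f^*\mathcal{F})^{**}$ compatibly with biduality, the forward implication is immediate (flat pullback of an isomorphism is an isomorphism), with no need for the second-syzygy characterization. Your approach to the forward direction via the presentation $0 \to \mathcal{F} \to \mathcal{E}_0 \to \mathcal{E}_1$ is perfectly valid and perhaps more hands-on, but it introduces an asymmetry and an extra ingredient (the syzygy criterion for reflexivity on noetherian schemes) that the paper avoids. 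What you flagged as the ``main obstacle'' is in fact the entire content of the paper's proof, dispatched in one line.
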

\begin{proof}
The formation of $\calHom_S(\mathcal{E},\mathcal{G})$ commutes with flat base change on $S$ for any pair of coherent sheaves $\mathcal{E}$ and $\mathcal{G}$. In particular, the formation of $\mathcal{F}^*$ commutes with flat base change. Now consider the biduality map $\mathcal{F} \to (\mathcal{F}^*)^*$. Since the reflexivity of $\mathcal{F}$ is precisely the condition that this map is an isomorphism, all claims follow from basic properties of flatness.
\end{proof}

Next, we show that exterior products of reflexive sheaves remain reflexive.

\begin{lemma} \label{lem:reflexive}
Let $f \colon X \to B$ and $g \colon Y \to B$ be two flat morphisms, and let $Z = X
\times_B Y$ be their fiber product. Let $\mathcal{F}$ and $\mathcal{G}$ be reflexive sheaves on $X$
and $Y$, respectively. If $\mathcal{F}$ and $\mathcal{G}$ are $B$-flat, then $\mathcal{F} \boxtimes \mathcal{G}$ is a reflexive sheaf on $Z$.
\end{lemma}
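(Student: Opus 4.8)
The plan is to reduce to commutative algebra and then run everything through the characterization of reflexive modules as second syzygies, using flatness over $B$ to keep the relevant tensor products exact. \textbf{Reduction to the affine case.} Reflexivity of a coherent sheaf and flatness of a sheaf over the base are both local conditions, and reflexivity is preserved under restriction to opens by Lemma~\ref{lem:reflex-pullback}. Covering $B$, $X$, $Y$ by affine opens $\Spec R$, $\Spec A$, $\Spec C$ with $A$ and $C$ flat over $R$, it therefore suffices to prove: if $M$ is a finitely generated reflexive $A$-module and $N$ a finitely generated reflexive $C$-module, both flat over $R$, then $M\otimes_R N$ is a reflexive $A\otimes_R C$-module.

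\textbf{Second-syzygy presentations and tensoring.} The one input I would invoke is the standard fact that, over a noetherian ring, a finitely generated module $P$ is reflexive if and only if it fits into an exact sequence $0\to P\to F_0\to F_1$ with $F_0,F_1$ finite free (``only if'': dualize a finite presentation of $P^{\vee}$; ``if'': $P$ is then a second syzygy of a finitely presented module). Choosing such presentations
\[
  0\to M\to A^{a}\xrightarrow{\;\alpha\;} A^{b}, \qquad 0\to N\to C^{c}\to C^{d},
\]
I would tensor over $R$. Flatness of $N$ over $R$ makes $-\otimes_R N$ exact, giving $0\to M\otimes_R N\to A^{a}\otimes_R N\xrightarrow{\;u\;} A^{b}\otimes_R N$ with $u=\alpha\otimes\mathrm{id}_N$; flatness of $A^{n}$ over $R$ makes $-\otimes_R A^{n}$ exact, giving $0\to A^{n}\otimes_R N\to (A\otimes_R C)^{nc}\to (A\otimes_R C)^{nd}$, so for each $n$ the module $A^{n}\otimes_R N$ sits inside the free module $(A\otimes_R C)^{nc}$ as the kernel of a map to $(A\otimes_R C)^{nd}$.

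\textbf{Splicing.} Now $M\otimes_R N=\{z\in A^{a}\otimes_R N : u(z)=0\}$, and $A^{a}\otimes_R N=\ker\bigl(v\colon(A\otimes_R C)^{ac}\to(A\otimes_R C)^{ad}\bigr)$. The map $u$ is the restriction of $\tilde u:=\alpha\otimes\mathrm{id}_{C^{c}}\colon(A\otimes_R C)^{ac}\to(A\otimes_R C)^{bc}$, which carries $A^{a}\otimes_R N$ into the submodule $A^{b}\otimes_R N\subseteq(A\otimes_R C)^{bc}$; since that last inclusion is injective (again by $R$-flatness of $A^{b}$), one has $u(z)=0\iff\tilde u(z)=0$ for $z\in A^{a}\otimes_R N$. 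Hence
\[
  M\otimes_R N=\ker\!\Big((A\otimes_R C)^{ac}\xrightarrow{(v,\tilde u)}(A\otimes_R C)^{ad}\oplus(A\otimes_R C)^{bc}\Big),
\]
a kernel of a map of finite free $A\otimes_R C$-modules, hence reflexive by the criterion above. This would complete the proof.

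\textbf{Where the difficulty lies.} The one non-formal ingredient is the characterization of reflexivity as ``locally a second syzygy'' in the full generality of noetherian rings. This is what matters here: $Z=X\times_B Y$ need not be normal (in the intended application $X$ and $Y$ are non-normal stable varieties), so one cannot instead test reflexivity via Serre's condition $S_2$, nor via the rank-one description $\mathcal F^{**}\cong j_*(\mathcal F|_U)$ recalled earlier, since $\mathcal F\boxtimes\mathcal G$ has arbitrary generic rank. Everything else is bookkeeping, and the only place the fiber-product structure is used is the tensoring step, where flatness of $M$, $N$, $A$, $C$ over $R$ is exactly what makes all the sequences stay exact; without flatness over the base the conclusion genuinely fails.
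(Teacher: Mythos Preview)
Your argument is correct. The second-syzygy characterization of reflexive modules over a noetherian ring (a finitely generated module $P$ is reflexive if and only if there is an exact sequence $0\to P\to F_0\to F_1$ with $F_0,F_1$ finite free) does hold in the stated generality, and once this is granted your splicing step is clean: the $R$-flatness of $N$ and of $A$ is exactly what keeps the tensored sequences exact, and your identification of $M\otimes_R N$ with the kernel of $(v,\tilde u)$ is verified by the commutativity $(\alpha\otimes\mathrm{id}_{C^c})\circ(\mathrm{id}_{A^a}\otimes\iota)=(\mathrm{id}_{A^b}\otimes\iota)\circ(\alpha\otimes\mathrm{id}_N)$. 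One should note that the criterion requires $A\otimes_R C$ to be noetherian, which is not automatic from $A,C$ noetherian; in the paper's setting everything is of finite type over a field, so this is fine.

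The paper takes a different and somewhat slicker route: rather than building a syzygy presentation for $\mathcal{F}\boxtimes\mathcal{G}$, it shows directly that $\mathcal{F}\boxtimes\mathcal{G}$ is the $\mathcal{O}_Z$-dual of the coherent sheaf $p_X^*\mathcal{F}^*\otimes p_Y^*\mathcal{G}^*$, and duals of coherent sheaves are always reflexive. The chain of isomorphisms uses Lemma~\ref{lem:reflex-pullback} to commute $(-)^*$ with flat pullback, together with the identity $\calHom(p_X^*\mathcal{F}^*,\mathcal{O}_Z)\otimes p_Y^*\mathcal{G}\simeq\calHom(p_X^*\mathcal{F}^*,p_Y^*\mathcal{G})$, which holds here because both sheaves are pulled back from different factors of the product and $\mathcal{G}$ is $B$-flat. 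Your approach has the virtue of making the role of $B$-flatness completely transparent at each step and of avoiding that last $\calHom$--tensor identity, whose justification (labelled ``by flatness of $p_X$ and $p_Y$'' in the paper) actually requires a moment's thought; the paper's approach, on the other hand, stays at the sheaf level and avoids invoking the syzygy criterion.
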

\begin{proof}
Let $p_X:Z \to X$ and $p_Y:Z \to Y$ be the two projection maps.  By Lemma \ref{lem:reflex-pullback}, the sheaves $p_X^* \mathcal{F}$ and $p_Y^* \mathcal{G}$ are reflexive. Then we have
\begin{align*}
(p_X^* \mathcal{F}^* \otimes p_Y^* \mathcal{G}^*)^*
&= \calHom(p_X^* \mathcal{F}^* \otimes p_Y^* \mathcal{G}^*,\mathcal{O}_Z) \\
&\simeq \calHom( p_X^* \mathcal{F}^*, \calHom(p_Y^* \mathcal{G}^*, \mathcal{O}_Z) )  \qquad (\textrm{by adjunction})\\
&\simeq \calHom( p_X^* \mathcal{F}^*, p_Y^* \mathcal{G}) \qquad (\textrm{by reflexivity of } \mathcal{G})  \\
&\simeq \calHom( p_X^* \mathcal{F}^*, \mathcal{O}_Z) \otimes p_Y^* \mathcal{G} \qquad (\textrm{by flatness of } p_X \textrm{ and } p_Y )\\
&\simeq p_X^* (\mathcal{F}^*)^* \otimes p_Y^* \mathcal{G}  \\
&\simeq p_X^* \mathcal{F} \otimes p_Y^* \mathcal{G} \qquad (\textrm{by reflexivity of } \mathcal{F}).
\end{align*}
Thus, $p_X^* \mathcal{F} \otimes p_Y^* \mathcal{G}$ is the dual of a coherent sheaf on $Z$ and, therefore, reflexive.
\end{proof}

We now show that the product of stable families is stable.

\begin{proposition} \label{prop:product}
The fiber product of two stable families is again a stable family.
\end{proposition}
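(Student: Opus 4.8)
The plan is to verify the three conditions of Definition \ref{def:stablefamily} for the fiber product $Z := X \times_S Y$ of two stable families $f\colon X \to S$ and $g\colon Y \to S$; write $p_X\colon Z \to X$ and $p_Y\colon Z \to Y$ for the two projections. Properness and flatness of $Z \to S$ are immediate: $Z \to X$ is the base change of $g$, hence proper and flat, and composing with $f$ shows $Z \to S$ is proper and flat.

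Second, I would check that each geometric fiber $Z_s \cong X_s \times_{k(s)} Y_s$ is a stable variety. It is geometrically connected and reduced, since these properties of varieties over a field are preserved by products in characteristic $0$, and it has semi-log canonical singularities by Lemma \ref{lem:slc}. For the canonical sheaf, one has $\omega_{X_s \times Y_s} \cong \omega_{X_s} \boxtimes \omega_{Y_s}$ on the relative Gorenstein locus, so if $r$ clears the indices of $K_{X_s}$ and $K_{Y_s}$ then $\omega_{X_s \times Y_s}^{[r]} \cong \omega_{X_s}^{[r]} \boxtimes \omega_{Y_s}^{[r]} \cong p_1^{\ast}\mathcal{O}_{X_s}(rK_{X_s}) \otimes p_2^{\ast}\mathcal{O}_{Y_s}(rK_{Y_s})$ is a line bundle; thus $K_{X_s\times Y_s}$ is $\Q$-Cartier, and it is ample because the exterior tensor product of two ample line bundles on a product is ample. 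So $Z_s$ is a stable variety.

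Third — and this is the substantive part — I would establish Koll\'ar's condition, namely that $\omega_{Z/S}^{[m]}$ is $S$-flat and commutes with base change for every $m \in \Z$. The heart of the matter is the identification
\[
\omega_{Z/S}^{[m]} \cong \omega_{X/S}^{[m]} \boxtimes \omega_{Y/S}^{[m]} = p_X^{\ast}\omega_{X/S}^{[m]} \otimes_{\mathcal{O}_Z} p_Y^{\ast}\omega_{Y/S}^{[m]}.
\]
To see this, observe that the relative Gorenstein locus of $Z \to S$ is $U_Z = U_X \times_S U_Y$, where $U_X \subset X$ and $U_Y \subset Y$ are the relative Gorenstein loci; since $U_X$ and $U_Y$ are big in every fiber, $U_Z$ meets every fiber of $Z \to S$ in a big open, hence is big in $Z$. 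On $U_Z$ the relative dualizing sheaf is a line bundle equal to $\omega_{U_X/S} \boxtimes \omega_{U_Y/S}$, so both sides of the displayed formula restrict to $\omega_{U_Z/S}^{\otimes m}$ there. Now the right-hand side is reflexive on $Z$ by Lemma \ref{lem:reflexive} (taking $B = S$, and using that $\omega_{X/S}^{[m]}$ and $\omega_{Y/S}^{[m]}$ are reflexive and $S$-flat), while the left-hand side is reflexive by construction; since a reflexive sheaf on these $S_2$, $G_1$ schemes is recovered as the pushforward of its restriction to a big open (\cite[Theorem 1.12]{HR_GD}), the two agree. Granting this, $S$-flatness follows because the exterior tensor product over $\mathcal{O}_S$ of two $S$-flat sheaves is again $S$-flat, and commutation with base change along any $T \to S$ holds because exterior products commute with base change and, by hypothesis, $\omega_{X/S}^{[m]}$ and $\omega_{Y/S}^{[m]}$ do; hence the restriction of $\omega_{Z/S}^{[m]}$ to $Z_T$ is $\omega_{X_T/T}^{[m]} \boxtimes \omega_{Y_T/T}^{[m]} \cong \omega_{Z_T/T}^{[m]}$.

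I expect the main obstacle to be precisely the displayed identification — that is, commuting the reflexive-hull operation past the exterior tensor product. Lemma \ref{lem:reflexive} is the tool designed for this, but one must be careful that the loci on which the relevant sheaves are forced to coincide are big in \emph{every fiber}, not merely in the total space, so that the identification survives arbitrary base change; the one genuinely delicate point is the interaction between reflexivity on $Z$ and the $S_2$, $G_1$ hypothesis, which one either imposes on the base or sidesteps by reasoning fiber by fiber through the base-change statement.
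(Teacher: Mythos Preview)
Your proposal is correct and follows essentially the same route as the paper: both verify the fiberwise conditions via Lemma~\ref{lem:slc}, establish the key identification $\omega_{Z/S}^{[m]} \cong \omega_{X/S}^{[m]} \boxtimes \omega_{Y/S}^{[m]}$ using Lemma~\ref{lem:reflexive} together with agreement on the relative Gorenstein locus, and then read off Koll\'ar's condition and ampleness from that formula. The only cosmetic difference is that the paper invokes \cite[Theorem~5.1.4]{AbramovichHassett} (reproduced as Lemma~\ref{modreflexflat} and Corollary~\ref{reflexbcflat}) to deduce $S$-flatness of $\omega_{X/S}^{[m]}$ from the base-change hypothesis, whereas you take flatness directly from Definition~\ref{def:stablefamily}; and the paper cites \cite[Lemma~7.3]{KovacsYPG} for the step where two reflexive sheaves agreeing on a relatively big open must coincide, which is exactly the delicate point you flag at the end.
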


\begin{proof}
Let $f \colon X \to B$ and $g \colon Y \to B$ be two stable families, and set $Z = X
\times_B Y$ and $h \colon Z \to B$. Since $f$ and $g$ are flat, projective, and have
connected fibers, the same is true for $h$. Lemma~\ref{lem:slc} shows that each fiber
$Z_b = X_b \times Y_b$ has semi-log canonical singularities. Next, we verify
Koll\'ar's condition. By assumption, the formation of $\omega_{X/B}^{[k]}$ commutes
with arbitrary base change, and so by \cite[Theorem~5.1.4]{AbramovichHassett}, we may conclude that $\omega_{X/B}^{[k]}$ is flat over $B$; we also reproduce the essential part of this argument below as Lemma \ref{modreflexflat} and Corollary \ref{reflexbcflat}  for the convenience of the reader. Since $f$ and
$g$ are flat morphisms, Lemma~\ref{lem:reflexive} shows that
\[
	p_X^{\ast} \omega_{X/B}^{[k]} \otimes p_Y^{\ast} \omega_{Y/B}^{[k]}
\]
is again a reflexive sheaf on $Z$. Arguing as in \cite[Lemma~7.3]{KovacsYPG}, we see that it agrees with the
reflexive sheaf $\omega_{Z/B}^{[k]}$ on an open set whose complement has relative codimension
at least two in $Z$. We must therefore have
\[
	\omega_{Z/B}^{[k]} 
		\simeq p_X^{\ast} \omega_{X/B}^{[k]} \otimes p_Y^{\ast} \omega_{Y/B}^{[k]}.
\]
This formula implies that the formation of $\omega_{Z/B}^{[k]}$ commutes with
arbitrary base change, and so Koll\'ar's condition holds for the family $h \colon Z
\to B$. Also, when $k$ is the least common multiple of the index of $X$ and the index
of $Y$, the formula shows that $\omega_{Z/B}^{[k]}$ is a relatively
ample line bundle, proving that $\omega_{Z/B}$ is an ample $\Q$-line bundle. This
concludes the proof that $h \colon Z \to B$ is a stable family.
\end{proof}

The next lemma and following corollary are here for the reader's convenience, as they are used in the proof of Proposition \ref{prop:product}; see \cite{KollarFlatness} for more results like these.

\begin{lemma} 
\label{modreflexflat}
Let $f:(R,\fm) \to (S,\fn)$ be an essentially finitely presented flat local map of noetherian local rings. Let $M$ be a finitely presented $S$-module. Assume the following:
\begin{enumerate}
\item The locus of points on $\Spec(S)$ where $M$ is flat over $\Spec(R)$ is dense in the fiber $\Spec(S/\fm)$.
\item The support of any nonzero $m \in M/\fm M$ contains a generic point of $\Spec(S/\fm)$.
\end{enumerate}
Then $M$ is $R$-flat.
\end{lemma}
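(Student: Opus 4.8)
The plan is to reduce the flatness of $M$ over $R$ to the local flatness criterion via the Tor-functor, and to control the obstruction using the hypotheses (1) and (2). First I would recall that since $f$ is flat and $M$ is finitely presented, $M$ is $R$-flat if and only if $\Tor_1^R(R/\fm, M) = 0$, i.e.\ if and only if the natural map $\fm \otimes_R M \to M$ is injective, equivalently $\Tor_1^R(k, M) = 0$ where $k = R/\fm$. Actually the cleanest route is: $M$ is $R$-flat if and only if $M$ is $R$-flat at $\fn$, and by the local criterion for flatness (using that $R$ is noetherian and $M$ finitely generated over the noetherian local $S$) this holds if and only if $\Tor_1^R(k, M) = 0$. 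So the whole problem becomes showing $N := \Tor_1^R(k,M)$ vanishes.

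Next I would observe that $N$ is a finitely generated $S$-module annihilated by $\fm$, hence a finitely generated $S/\fm S$-module, so it is supported on a closed subset of $\Spec(S/\fm)$. The key is to show this support is empty. I would argue by contradiction: if $N \neq 0$, pick an associated prime $\fq$ of $N$ as an $S/\fm S$-module. By hypothesis (2) applied to the module $N$ — here I need to know that associated primes of $N$ over $S/\fm S$ are among those of $M/\fm M = M \otimes_R k$, which follows because $N$ injects into, or is a subquotient related to, $M/\fm M$ via the Tor spectral sequence / the exact sequence $0 \to \Tor_1^R(k,M) \to \fm \otimes_R M \to M \to M/\fm M \to 0$ — one finds that $\fq$ must be a generic point of $\Spec(S/\fm)$. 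But hypothesis (1) says $M$ is $R$-flat at such generic points: localizing at $\fq$, the module $M_\fq$ is flat over $R$, so $\Tor_1^R(k,M)_\fq = \Tor_1^R(k, M_\fq) = 0$, contradicting that $\fq \in \Supp_S(N)$. Hence $N = 0$ and $M$ is $R$-flat.

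The main obstacle is the bookkeeping in the middle step: controlling $\Ass_{S/\fm S}(\Tor_1^R(k,M))$ in terms of $\Ass_{S/\fm S}(M/\fm M)$. The honest way is to use that $\Tor_1^R(k,M)$ is the kernel of $\fm \otimes_R M \to \fm M \subseteq M$; since $\fm$ is finitely generated one can present this kernel as a submodule of a finite direct sum of copies of $M$, modulo $\fm$-torsion, and thereby see its associated primes lie among those of $M/\fm M$. One must be slightly careful that the relevant associated primes are taken over $S/\fm S$, which is legitimate since everything in sight is killed by $\fm$. Apart from this, the argument is a routine application of the local flatness criterion together with the "$\Ass$ is detected at generic points under a density hypothesis" idea encoded in assumptions (1) and (2); no hard input is needed beyond standard commutative algebra.
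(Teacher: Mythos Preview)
Your overall strategy---reduce to the local flatness criterion, then use hypotheses (1) and (2) to kill the obstruction---matches the paper's. The gap is precisely where you flag the ``bookkeeping'': the assertion that $\Ass_{S/\fm S}\big(\Tor_1^R(k,M)\big) \subseteq \Ass_{S/\fm S}(M/\fm M)$ is false in general, and the justification you sketch does not establish it. The exact sequence $0 \to N \to \fm \otimes_R M \to M \to M/\fm M \to 0$ exhibits $N$ as a submodule of $\fm \otimes_R M$, not of a sum of copies of $M/\fm M$; passing to a presentation $M^r \twoheadrightarrow \fm \otimes_R M$ only realises $N$ as a \emph{subquotient} of $M^r$, which does not control associated primes. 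For a concrete failure, take $R = k[[t]]$, $S = k[[t,x]]$, $M = S/(t^2, tx)$: then $M/\fm M = k[[x]]$ has $\Ass = \{(0)\}$, whereas $N = M[t] = (t,x)/(t^2,tx) \cong k[[x]]/(x) \oplus k[[x]]$ has $\Ass = \{(0),(x)\}$. (Here condition (2) holds but (1) fails; the point is that your argument for the inclusion of associated primes makes no use of (1), so this example refutes it.)

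The paper avoids this by using a different incarnation of the local criterion: it checks that each map
\[
a_n:\; \fm^n/\fm^{n+1} \otimes_{k} M/\fm M \;\longrightarrow\; \fm^n M/\fm^{n+1} M
\]
is injective. The kernel $K_n$ is then \emph{by construction} a submodule of $(M/\fm M)^{\dim_k(\fm^n/\fm^{n+1})}$, so any nonzero element of $K_n$ immediately produces a nonzero element of $M/\fm M$ with the same (hence non-generic) support, contradicting (2). In other words, passing to the associated graded is exactly what makes the embedding into copies of $M/\fm M$ transparent; working with $\Tor_1^R(k,M)$ in one piece loses this.
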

\begin{proof}
By the local flatness criterion, it suffices to check that the natural surjective maps
\[ a_n: \fm^n / \fm^{n+1} \otimes_{R/\fm} M / \fm M \twoheadrightarrow \fm^{n} M / \fm^{n+1} M \]
are isomorphisms for all $n$. Let $K_n = \ker(a_n)$. The assumption that the flat locus is dense in the fibers tells us that $K_n$ is not supported at any of the generic points of $\Spec(S/\fm)$. Since the source of $a_n$ can be identified with a direct sum of copies of $M/\fm M$, it follows that if $K_n \neq 0$, then $M / \fm M$ admits sections not supported at the generic points of $\Spec(S/\fm)$. However, this contradicts the second assumption, so $K_n = 0$, proving flatness.
\end{proof}

\begin{corollary}
\label{reflexbcflat}
Let $f:X \to S$ be a locally finitely presented flat map of noetherian schemes with fibers that are $S_2$ and of pure dimension $d$. Let $U \subset X$ be an open subset dense in all the fibers.  Let $\mathcal{F}$ be a coherent sheaf on $X$ such that $\mathcal{F}|_U$ is $S$-flat. Assume that $\mathcal{F}|_{X_s}$ is reflexive for any $s \in S$. Then $\mathcal{F}$ is $S$-flat.
\end{corollary}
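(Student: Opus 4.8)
The plan is to check flatness stalk-locally and reduce each stalk to Lemma~\ref{modreflexflat}. Fix a point $x \in X$ with image $s = f(x)$, and set $R = \mathcal{O}_{S,s}$, $B = \mathcal{O}_{X,x}$, $M = \mathcal{F}_x$, and let $\fm$ be the maximal ideal of $R$. Then $R \to B$ is an essentially finitely presented flat local homomorphism of noetherian local rings, $M$ is a finitely presented $B$-module, and $\mathcal{F}$ is $S$-flat precisely when $M$ is $R$-flat for every such $x$. So it suffices to verify the two hypotheses of Lemma~\ref{modreflexflat} (with $B$ in the role of ``$S$'' there); note that $\Spec(B/\fm B)$ is the local ring of the fiber $X_s$ at $x$.

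For hypothesis (1): since $\mathcal{F}|_U$ is $S$-flat, every point of $U$ lies in the locus on $\Spec(B)$ where $M$ is flat over $R$. As $U$ is dense in every fiber, $U \cap X_s$ is dense in $X_s$, so $U$ meets every irreducible component of $X_s$ passing through $x$ and, being open, contains the generic point of each such component. These generic points are exactly the generic points of $\Spec(B/\fm B)$, so the flat locus of $M$ over $R$ is dense in $\Spec(B/\fm B)$.

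For hypothesis (2): the module $M/\fm M$ is the stalk at $x$ of the sheaf $\mathcal{F}|_{X_s}$, which is reflexive by hypothesis; since reflexivity is local, $M/\fm M$ is a reflexive $\mathcal{O}_{X_s,x}$-module. Any reflexive module is torsion-free --- indeed $\calHom({-},\mathcal{O})$ always produces a torsion-free module, and a reflexive module is such a double dual. Moreover $X_s$ is $S_2$, hence $S_1$, so $\mathcal{O}_{X_s,x}$ has no embedded primes and its set of zerodivisors is the union of its finitely many minimal primes. Thus for a nonzero $m \in M/\fm M$, every element of $\Ann(m)$ is a zerodivisor, so by prime avoidance $\Ann(m)$ is contained in some minimal prime $\fq$ of $\mathcal{O}_{X_s,x}$; hence $\fq \in \Supp(m)$, and $\fq$ is a generic point of $\Spec(B/\fm B)$. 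This is precisely hypothesis (2).

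With both hypotheses verified, Lemma~\ref{modreflexflat} gives that $M = \mathcal{F}_x$ is $R$-flat for every $x \in X$, and therefore $\mathcal{F}$ is $S$-flat. I do not expect a serious obstacle here: the argument is a localization followed by a citation of Lemma~\ref{modreflexflat}. The two points that deserve a little care are the passage in the second paragraph from ``$U$ dense in the fibers'' to ``the flat locus is dense in the localized fiber'', and the homological observation underpinning hypothesis (2), namely that a torsion-free module over a ring with no embedded primes has every associated prime contained in a minimal one.
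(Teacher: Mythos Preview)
Your proof is correct and follows essentially the same approach as the paper: localize at a point and verify the two hypotheses of Lemma~\ref{modreflexflat}. The only cosmetic difference is in the verification of hypothesis~(2): the paper embeds $\mathcal{F}|_{X_s}$ locally into a free $\mathcal{O}_{X_s}$-module and then argues that nonzero sections of $\mathcal{O}_{X_s}$ are supported at a generic point (using $S_2$ and pure dimension), whereas you argue directly that reflexive implies torsion-free and use the $S_1$ condition to show every annihilator is contained in a minimal prime. Both arguments encode the same commutative-algebra fact, and your route is arguably slightly cleaner since it avoids the need to treat $d=0$ separately.
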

\begin{proof}
There is nothing to show when $d = 0$ as $U = X$ in that case by density, so we may assume $d > 0$. To show the $S$-flatness of $\mathcal{F}$, we will check that the conditions of Lemma \ref{modreflexflat} hold locally on $X$. The first condition is satisfied by assumption on $U$. For the second one, given a point $s \in S$, the reflexivity of $\mathcal{F}|_{X_s}$ tells us that, locally on $X_s$, we may realize $\mathcal{F}|_{X_s}$ as a subsheaf of a direct sum of copies of $\mathcal{O}_{X_s}$. Since $X_s$ is a pure and positive dimensional  $S_2$ scheme,  all nonzero local sections of $\mathcal{O}_{X_s}$ are supported at some generic point of $X_s$, and so the same is true for $\mathcal{F}|_{X_s}$, showing the second condition is satisfied. By Lemma \ref{modreflexflat}, we conclude that $\mathcal{F}$ is $S$-flat, as desired.
\end{proof}

By Proposition \ref{prop:product}, the fiber product of two stable families is also a stable family. Hence, we define the desired product map as follows:

\begin{definition}
For any two stable varieties $X$ and $Y$, let $\mathrm{Prod_{X,Y}}$ be the morphism
\begin{equation*}
 \Prod_{X,Y} : \mathcal{M}(X) \times \mathcal{M}(Y) \to \mathcal{M}(X \times Y).
\end{equation*}
defined by taking fiber products of stable families.
\end{definition}


\section{The local theory} \label{sec:localtheory}

Our goal in this section is to explain why taking products of stable varieties defines a finite \'etale morphism on moduli spaces. The two main steps of the proof are: (a) showing that the deformation theory of products behaves in the expected way for a fairly large class of algebro-geometric objects, and (b) dealing with the slightly subtle issues related to the deformation theory of stable varieties, stemming ultimately from Koll\'ar's condition in Definition \ref{def:stablefamily} of admissible stable families.  We first study (a) in \S \ref{sec:defthyabstractprod}.  Then \S \ref{sec:defmorphisms} contains some general results on deformations of morphism, which form the key technical ingredients of the proofs in \S \ref{sec:qgordefthy}, where we carry out step (b).  

The two main tools used in our proofs are the Abramovich-Hassett description of the admissible deformation theory of stable varieties in terms of the (usual) deformation theory of certain associated stacks (see \cite{AbramovichHassett}), and derived algebraic geometry. The former reduces the admissible deformation theory of stable varieties to the usual deformation theory of certain associated stacks, permitting us to use the cotangent complex. The main advantage of the derived perspective is an {\em explicit} construction of deformations and obstructions which makes calculations feasible, especially in the singular case (see the proof of Proposition \ref{deformds}). The relevant background is summarized in Appendix \ref{sec:dag}; we note here that all derived rings that occur in the discussion below are especially mild: they are simplicial $k$-algebras with finite dimensional homology.

\subsection{The deformation theory of products}
\label{sec:defthyabstractprod}

Fix a field $k$.   The main result of this section, Theorem \ref{mainthm:defthy}, is a general theorem about the deformation theory of products of two Deligne-Mumford stacks.  Under some mild hypotheses on the two stacks, the main one being lack of infinitesimal automorphisms, we show that the deformations of the product are given uniquely by products of deformations of the factors. The meat of the proof is a rather thankless task: we check that obstructions behave predictably under taking products.  We will use this result in \S \ref{sec:qgordefthy} to understand the infinitesimal behavior of our global product map $\Prod_{X,Y}$. We remark that the aforementioned stack-theoretic description of the admissible deformation theory of a stable variety necessitates formulating and proving results in the present section for stacks rather than varieties.

We introduce two pieces of notation first.

\begin{notation}
Let $\SArt_k$ denote the $\infty$-category of derived local artinian $k$-algebras, i.e., those $A \in \SAlg_k$ with $\pi_0(A)$ local with residue field $k$, and $\oplus_i \pi_i(A)$ finite dimensional as a $k$-vector space. The category $\SArt_k$ provides test objects for deformation-theoretic questions in derived algebraic geometry, and we call its objects small derived algebras. Any map $A \to B$ in $\SArt_k$ that is surjective on $\pi_0$ can be factored as $A = A_0 \to A_1 \to \dots \to A_n = B$ with $A_i \to A_{i+1}$ a square-zero extension of $A_{i+1}$ by $k[j]$ for some $j$ (see \cite[Lemma 6.2.6]{LurieDAG}). We let $\Art_k$ denote the full subcategory of $\SArt_k$ spanned by discrete small derived algebras. Note that $\Art_k$ is simply the ordinary category of artinian local $k$-algebras with residue field $k$; we refer to its objects as small algebras.
\end{notation}

\begin{notation}
For a Deligne-Mumford $k$-stack $X$, let $\Def_X$ be the $\infty$-groupoid-valued functor which associates to $A \in \SArt_k$ the $\infty$-groupoid of all pairs $(f:\mathcal{X} \to \Spec(A),i:X \to \mathcal{X})$ where $f$ is a flat morphism, and $i$ identifies $X$ with the special fiber of $f$. We will refer to such pairs $(f,i)$ as flat deformations of $X$. When restricted to $\Art_k \subset \SArt_k$, this definition recovers the ordinary groupoid-valued functor of flat deformations of $X$. For a morphism $\pi:Y \to X$, let $\Def_\pi(A)$ be the $\infty$-groupoid of quadruples $(f:\mathcal{X} \to \Spec(A),g:\mathcal{Y} \to \Spec(A),\pi_A:\mathcal{Y} \to \mathcal{X},\phi)$ where $f$ and $g$ are flat deformations of $X$ and $Y$ respectively to $A$,  $\pi_A$ is an  $A$-map deforming $\pi$, and $\phi$ is an identification of $\pi_A \otimes_A k$ with $\pi$.
\end{notation}

Given two Deligne-Mumford $k$-stacks $X$ and $Y$, there is a natural morphism 
\[ \mathfrak{p}\mathrm{rod}_{X,Y}:\Def_X \times \Def_Y \to \Def_{X \times Y} \]
given by taking fiber products. Our basic theorem concerns the behavior of the map $\mathfrak{p}\mathrm{rod}_{X,Y}$:

\begin{theorem}
\label{mainthm:defthy}
Fix a field $k$. Let $X$ and $Y$ be proper geometrically connected and geometrically reduced Deligne-Mumford $k$-stacks with no infinitesimal automorphisms. Then the map $\mathfrak{p}\mathrm{rod}_{X,Y}$ considered above is an isomorphism of functors on $\Art_k$.
\end{theorem}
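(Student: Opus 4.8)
The plan is to reduce the assertion to a computation with cotangent complexes. As recalled in Appendix~\ref{sec:dag}, for a proper Deligne-Mumford $k$-stack $Z$ the functor $\Def_Z$ on $\SArt_k$ carries a tangent complex $\mathfrak{t}_Z := \R\Hom_Z(L_Z,\mathcal{O}_Z)[1]$, whose homotopy groups $\pi_1\mathfrak{t}_Z = \Hom_Z(L_Z,\mathcal{O}_Z)$, $\pi_0\mathfrak{t}_Z = \Ext^1_Z(L_Z,\mathcal{O}_Z)$ and $\pi_{-1}\mathfrak{t}_Z = \Ext^2_Z(L_Z,\mathcal{O}_Z)$ are respectively the space of infinitesimal automorphisms, the space of first-order deformations, and the obstruction space; the tangent complex of $\Def_X\times\Def_Y$ is $\mathfrak{t}_X\oplus\mathfrak{t}_Y$. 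First I would record what the hypotheses provide: since $X$ is Deligne-Mumford we have $L_X\in\D^{\le 0}$, hence $\R\Hom_X(L_X,\mathcal{O}_X)\in\D^{\ge 0}$, and the no-infinitesimal-automorphisms hypothesis says exactly that $H^0$ of this complex vanishes, so in fact $\R\Hom_X(L_X,\mathcal{O}_X)\in\D^{\ge 1}$ (and likewise for $Y$); and since $X$ and $Y$ are proper, geometrically connected and geometrically reduced over $k$, we have $H^0(X,\mathcal{O}_X) = H^0(Y,\mathcal{O}_Y) = k$, so $\R\Gamma(X,\mathcal{O}_X)\simeq k\oplus C_X$ and $\R\Gamma(Y,\mathcal{O}_Y)\simeq k\oplus C_Y$ with $C_X, C_Y\in\D^{\ge 1}$.

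Next I would carry out the product computation. For a product over $k$ there is a canonical decomposition $L_{X\times Y}\simeq p_X^*L_X\oplus p_Y^*L_Y$ (combine the base-change isomorphism $L_{X\times Y/Y}\simeq p_X^*L_X$ with a splitting of the transitivity triangle, which one checks on an \'etale atlas using the K\"unneth formula for the cotangent complex of a tensor product of $k$-algebras). Dualizing, and using adjunction together with the flat base change isomorphism $\R p_{X*}\mathcal{O}_{X\times Y}\simeq\mathcal{O}_X\otimes_k\R\Gamma(Y,\mathcal{O}_Y)$, one gets
\[
\begin{aligned}
\R\Hom_{X\times Y}(L_{X\times Y},\mathcal{O}_{X\times Y}) &\simeq \R\Hom_X\bigl(L_X,\mathcal{O}_X\otimes_k\R\Gamma(Y,\mathcal{O}_Y)\bigr) \\
&\quad\oplus \R\Hom_Y\bigl(L_Y,\R\Gamma(X,\mathcal{O}_X)\otimes_k\mathcal{O}_Y\bigr).
\end{aligned}
\]
Substituting $\R\Gamma(Y,\mathcal{O}_Y)\simeq k\oplus C_Y$ and symmetrically, the $k$-summands reproduce $\R\Hom_X(L_X,\mathcal{O}_X)\oplus\R\Hom_Y(L_Y,\mathcal{O}_Y)$, while each remaining summand, of the form $\R\Hom_X(L_X,\mathcal{O}_X)\otimes_k C_Y$ (resp.\ $C_X\otimes_k\R\Hom_Y(L_Y,\mathcal{O}_Y)$), is a tensor product over $k$ of two complexes in $\D^{\ge 1}$ and hence lies in $\D^{\ge 2}$. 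After shifting by $[1]$ this reads $\mathfrak{t}_{X\times Y}\simeq\mathfrak{t}_X\oplus\mathfrak{t}_Y\oplus N$ with $N\in\D^{\ge 1}$ (so $\pi_i N = 0$ for $i\ge 0$), and the map induced on tangent complexes by $\mathfrak{p}\mathrm{rod}_{X,Y}$ is a split monomorphism with cofiber $N$, a retraction being given by the augmentations $\R\Gamma(Y,\mathcal{O}_Y)\to H^0(Y,\mathcal{O}_Y) = k$ and $\R\Gamma(X,\mathcal{O}_X)\to H^0(X,\mathcal{O}_X) = k$. Consequently $\mathfrak{p}\mathrm{rod}_{X,Y}$ is an isomorphism on $\pi_0$, i.e.\ on tangent spaces --- here the no-automorphisms hypothesis is exactly what suppresses the would-be cross-terms $\Hom_X(L_X,\mathcal{O}_X)\otimes_k H^1(Y,\mathcal{O}_Y)$ and $\Hom_Y(L_Y,\mathcal{O}_Y)\otimes_k H^1(X,\mathcal{O}_X)$ in the tangent space of $\Def_{X\times Y}$ --- a monomorphism on $\pi_{-1}$, i.e.\ on obstruction spaces, and an isomorphism on $\pi_{\ge 1}$ (in particular $X\times Y$ has no infinitesimal automorphisms either, as $\pi_1\mathfrak{t}_{X\times Y} = \pi_1 N = 0$). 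The theorem then follows from a standard criterion: a morphism of deformation functors of objects without infinitesimal automorphisms that induces an isomorphism on tangent spaces and an injection on obstruction spaces is an isomorphism on $\Art_k$ --- equivalently, since the relative tangent complex $N[-1]$ lies in $\D^{\ge 2}$, the morphism of formal moduli problems $\mathfrak{p}\mathrm{rod}_{X,Y}$ restricts to an equivalence on discrete artinian test objects.

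The step I expect to be the main obstacle is the one the authors call ``thankless'': verifying that the map $\mathfrak{p}\mathrm{rod}_{X,Y}$ induces on tangent complexes is genuinely this split inclusion --- concretely, that (on the $X$-factor) it factors as the map induced by the unit $\mathcal{O}_X\to\R p_{X*}\mathcal{O}_{X\times Y}$ inside $\R\Hom_X(L_X,-)$ followed by the split inclusion of the $p_X^*L_X$-summand of $\R\Hom_{X\times Y}(L_{X\times Y},\mathcal{O}_{X\times Y})$, and symmetrically for $Y$. Equivalently, this is the assertion that the obstruction to smoothing a product of deformations of $X$ and $Y$ lies in the sub-summand $\Ext^2_X(L_X,\mathcal{O}_X)\oplus\Ext^2_Y(L_Y,\mathcal{O}_Y)$ of $\Ext^2_{X\times Y}(L_{X\times Y},\mathcal{O}_{X\times Y})$ and equals the pair of obstructions of the two factors, the cross-terms $\Ext^1_X(L_X,\mathcal{O}_X)\otimes_k H^1(Y,\mathcal{O}_Y)$ and $\Ext^1_Y(L_Y,\mathcal{O}_Y)\otimes_k H^1(X,\mathcal{O}_X)$ never being hit. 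This compatibility can be verified using the explicit derived construction of deformations and obstructions, but it is bookkeeping rather than a conceptual difficulty; the real content is the cotangent complex decomposition and the connectivity estimates above.
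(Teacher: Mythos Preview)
Your proposal is correct and follows essentially the same approach as the paper's proof: both reduce to the connectivity estimate that the cross-terms $\R\Hom_X(L_X,\mathcal{O}_X)\otimes_k C_Y$ (and its symmetric partner) lie in $\D^{\ge 2}$, which is exactly the content of the paper's Lemma~\ref{cotpull}. The only difference is packaging: the paper proves Lemma~\ref{cotpull} directly and then climbs the tower of small extensions by hand, invoking the explicit obstruction compatibility of Theorem~\ref{thm:dag-sqzero-obs-compat} at each step, whereas you compute the full tangent complex via the splitting $L_{X\times Y}\simeq p_X^*L_X\oplus p_Y^*L_Y$ and then appeal to the standard tangent-obstruction criterion; the ``thankless'' compatibility you flag is precisely what the paper's Theorem~\ref{thm:dag-sqzero-obs-compat} supplies.
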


Let us record certain vanishings that are available to us; these results enable fluid passage between the product and its factors.

\begin{lemma}
\label{cotpull}
Fix a field $k$. Let $X$ and $Y$ be proper Deligne-Mumford $k$-stacks. Assume that $X$ admits no infinitesimal automorphisms, and that $H^0(Y,\mathcal{O}_Y) = k$. Then the natural map
\[ \Ext_X^i(L_X,\mathcal{O}_X) \to \Ext^i_{X \times Y}({p_1}^*L_X,\mathcal{O}_{X \times Y}) \]
induced by pulling back along the projection $p_1:X \times Y \to X$ is bijective for $i = 0,1$, and injective for $i = 2$.
\end{lemma}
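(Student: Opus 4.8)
The plan is to compute the $\Ext$-groups appearing on the right-hand side by a Künneth-type argument, reducing them to $\Ext$-groups over $X$ alone. First I would use the projection formula together with the fact that $p_1$ is flat (being a base change of $Y \to \Spec k$, which is flat) to rewrite
\[
\R\Hom_{X \times Y}(p_1^* L_X, \mathcal{O}_{X \times Y}) \simeq \R\Hom_X\bigl(L_X, {p_1}_* \mathcal{O}_{X \times Y}\bigr).
\]
Here one uses that $p_1^* L_X \simeq L_{X \times Y/Y}$ is a perfect (or at least pseudo-coherent) complex, so the adjunction $(\L p_1^*, \R {p_1}_*)$ applies and the natural base-change/pullback map on $\Ext$ is identified with the one induced by the unit $\mathcal{O}_X \to \R{p_1}_* \mathcal{O}_{X \times Y}$. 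Then, since $X \times Y \to X$ has geometrically connected fibers and $H^0(Y, \mathcal{O}_Y) = k$, flat base change (proper, since $Y/k$ is proper) gives $\R^0 {p_1}_* \mathcal{O}_{X \times Y} = \mathcal{O}_X$, while the higher direct images $\R^j {p_1}_* \mathcal{O}_{X \times Y}$ are $H^j(Y, \mathcal{O}_Y) \otimes_k \mathcal{O}_X$ for $j \geq 1$. So there is an exact triangle $\mathcal{O}_X \to \R{p_1}_* \mathcal{O}_{X \times Y} \to C$ with $C$ concentrated in (cohomological) degrees $\geq 1$.

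Applying $\R\Hom_X(L_X, -)$ to this triangle and taking the long exact sequence of $\Ext$-groups, the map in the statement is
\[
\Ext^i_X(L_X, \mathcal{O}_X) \longrightarrow \Ext^i_X\bigl(L_X, \R{p_1}_*\mathcal{O}_{X \times Y}\bigr),
\]
and its failure to be bijective is controlled by $\Ext^{i-1}_X(L_X, C)$ and $\Ext^i_X(L_X, C)$. Since $C$ lives in degrees $\geq 1$ and $L_X$ is connective (its cohomology sheaves vanish in positive cohomological degree — equivalently $L_X \in \D^{\leq 0}$, as $X$ is a reasonable DM stack), we get $\Ext^m_X(L_X, C) = 0$ for $m \leq 0$. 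This immediately yields bijectivity for $i = 0$ (both error terms vanish: $\Ext^{-1}$ and $\Ext^0$), bijectivity for $i = 1$ (error terms $\Ext^0_X(L_X, C) = 0$ on the left and we still need the surjectivity input from $\Ext^1_X(L_X,C)$, see below), and injectivity for $i = 2$ (error term $\Ext^1_X(L_X, C)$ on the left kills surjectivity but not injectivity once $\Ext^0_X(L_X,C)=0$).

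The point that needs care — and which I expect to be the real obstacle — is handling the term $\Ext^1_X(L_X, C)$, which a priori could be nonzero (e.g. $C$ has an $H^1 = H^1(Y,\mathcal{O}_Y)^{\oplus} \otimes \mathcal{O}_X$ piece, contributing $\Ext^0_X(L_X, \mathcal{O}_X) \otimes H^1(Y,\mathcal{O}_Y)$) and thus threatens surjectivity at $i=1$. This is exactly where the hypothesis that $X$ has \emph{no infinitesimal automorphisms} enters: it says precisely $\Ext^0_X(L_X, \mathcal{O}_X) = \Hom_X(L_X, \mathcal{O}_X) = 0$. Running the hyper-Ext spectral sequence $E_2^{p,q} = \Ext^p_X(L_X, \mathcal{H}^q) \Rightarrow \Ext^{p+q}_X(L_X, C)$ where the $\mathcal{H}^q$ are the (shifted) cohomology sheaves $H^q(Y,\mathcal{O}_Y) \otimes_k \mathcal{O}_X$ of $C$, the contributions to $\Ext^1_X(L_X, C)$ come from $E_2^{0,1} = \Hom_X(L_X,\mathcal{O}_X) \otimes H^1(Y,\mathcal{O}_Y) = 0$ and $E_2^{1,0}$, but $E_2^{1,0}$ vanishes since $C$ has no $H^0$. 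Hence $\Ext^1_X(L_X, C) = 0$, giving surjectivity at $i = 1$, and the argument is complete. I would just need to double-check the degree conventions for $L_X$ on a DM stack and that proper flat base change applies verbatim in the stacky setting (it does, by the general theory, or by passing to an étale atlas), and that the identification of the natural pullback map with the map induced by the unit of adjunction is the correct one — a diagram chase with no surprises.
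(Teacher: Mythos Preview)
Your argument is correct and is essentially the paper's own proof: adjunction identifies the target with $\Ext^i_X(L_X,\R{p_1}_*\mathcal{O}_{X\times Y})$, the cofiber $C$ of $\mathcal{O}_X \to \R{p_1}_*\mathcal{O}_{X\times Y}$ lives in cohomological degrees $\geq 1$, and one shows $\Ext^{\leq 1}_X(L_X,C)=0$ using connectivity of $L_X$ together with $\Hom_X(L_X,\mathcal{O}_X)=0$; the paper uses a truncation triangle where you use the hypercohomology spectral sequence, but the content is identical.

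One bookkeeping slip to fix: in your middle paragraph you claim that injectivity at $i=2$ only needs $\Ext^0_X(L_X,C)=0$, with $\Ext^1_X(L_X,C)$ merely obstructing surjectivity. That is backwards---in the long exact sequence the term $\Ext^1_X(L_X,C)$ maps \emph{into} $\Ext^2_X(L_X,\mathcal{O}_X)$, so its vanishing is exactly what gives injectivity at $i=2$. Since you establish $\Ext^1_X(L_X,C)=0$ anyway (for surjectivity at $i=1$), the proof goes through; just correct the attribution of which vanishing gives which conclusion.
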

\begin{proof}
The projection formula and adjointness give natural identifications
\begin{eqnarray*} 
\Ext^i_{X \times Y}({p_1}^*L_X,\mathcal{O}_{X \times Y})  &=&  \Ext^i_X(L_X,\R {p_1}_* \mathcal{O}_{X \times Y}) \\
													&=& \Ext^i_X(L_X,\R \Gamma(Y,\mathcal{O}_Y) \otimes \mathcal{O}_X).
\end{eqnarray*}
Now consider the exact triangle
\[ \mathcal{O}_X \stackrel{u}{\to} \R\Gamma(Y,\mathcal{O}_Y) \otimes_k \mathcal{O}_X \to \mathcal{Q} \]
where $\mathcal{Q}$ is defined to be the homotopy cokernel of $u$. Applying $\Ext^i(L_X,-)$ gives 
\[ \Ext^i(L_X,\mathcal{Q}[-1]) \to \Ext^i(L_X,\mathcal{O}_X) \to  \Ext^i_{X \times Y}({p_1}^*L_X,\mathcal{O}_{X \times Y}) \to \Ext^i_X(L_X,\mathcal{Q}). \]
Thus, it suffices to check that $\Ext^i(L_X,\mathcal{Q}) = 0$ for $i \leq 1$. Since $L_X$ is connective and $\mathcal{Q} \in D^{\geq 1}(X)$, we immediately see that $\Ext^0(L_X,\mathcal{Q}) = 0$. To check that $\Ext^1(L_X,\mathcal{Q})$ vanishes as well, note that the exact triangle 
\[ \tau_{\geq 2} \mathcal{Q}[-1] \to \mathcal{H}^1(\mathcal{Q})[-1] \to \mathcal{Q} \]
shows that $\Ext^1(L_X,\mathcal{Q}) \simeq \Ext^0(L_X,\mathcal{H}^1(\mathcal{Q}))$. By construction, $\mathcal{H}^1(\mathcal{Q}) \simeq H^1(Y,\mathcal{O}_Y) \otimes \mathcal{O}_X$ is a free $\mathcal{O}_X$-module. The desired claim now follows from the stability assumption that $\Ext^0(L_X,\mathcal{O}_X) = 0$.
\end{proof}

We can now prove the desired result.

\begin{proof}[Proof of Theorem \ref{mainthm:defthy}]
We will show that 
\[ \mathfrak{p}\mathrm{rod}_{X,Y}(A):\Def_X(A) \times \Def_Y(A) \to \Def_{X \times Y}(A) \]
is an equivalence of groupoids for $A \in \Art_k$ by working inductively on $\dim_k(A)$. As $X$ and $Y$ lack infinitesimal automorphisms, the groupoids in question are discrete, and will be viewed as sets. When $\dim_k(A) = 1$, we have $A = k$ and there is nothing to show as both sides are reduced to points. By induction, we may assume that the desired claim is known for all $A \in \Art_k$ with $\dim_k(A) \leq n$ for some fixed integer $n$. Given an $\tilde{A} \in \Art_k$ with $\dim_k(\tilde{A}) = n + 1$, we can find a surjection $\tilde{A} \to A$ with kernel $k$ as an $A$-module. This gives a diagram 
\[ \xymatrix{ \Def_X(\tilde{A}) \times \Def_Y(\tilde{A}) \ar[rr]^-{\mathfrak{p}\mathrm{rod}_{X,Y}(\tilde{A})} \ar[d] && \Def_{X \times Y}(\tilde{A})\ar[d] \\
				\Def_X(A) \times \Def_Y(A) \ar[rr]^-{\mathfrak{p}\mathrm{rod}_{X,Y}(A)}& & \Def_{X \times Y}(A) } \]
with $\mathfrak{p}\mathrm{rod}_{X,Y}(A)$ bijective by induction. We will show that $\mathfrak{p}\mathrm{rod}_{X,Y}(\tilde{A})$ is bijective. As there is nothing to show if the bottom row is empty, we may fix a base point of the bottom row, i.e., we fix flat deformations $f:\mathcal{X} \to \Spec(A)$ and $g:\mathcal{Y} \to \Spec(A)$ of $X$ and $Y$ to $\Spec(A)$. Let $\pi_{f,g}:\mathcal{X} \times_{\Spec(A)} \mathcal{Y} \to \Spec(A)$ denote their fiber product, and let $p:\mathcal{X} \times_{\Spec(A)} \mathcal{Y} \to \mathcal{X}$ and $q:\mathcal{X} \times_{\Spec(A)} \mathcal{Y} \to \mathcal{Y}$ be the two projection maps.

We first show that all fibers of $\mathfrak{p}\mathrm{rod}_{X,Y}(\tilde{A})$ is non-empty, i.e., if $\pi_{f,g}$  admits a deformation across $\Spec(A) \hookrightarrow \Spec(A')$, then the same is true for $f$ and $g$. Let $D_A:L_A \to k[1]$ be the derivation classifying the surjection $\tilde{A} \to A$ (see Theorem \ref{thm:dag-sq-zero}). Associated to this derivation, we have obstruction classes 
\[ \ob(f,f^* D_A):L_{\mathcal{X}/A}[-1] \to \mathcal{O}_X[1] \quad \textrm{and} \quad \ob(g,g^* D_A):L_{\mathcal{Y}/A}[-1] \to \mathcal{O}_{Y}[1]  \]
on $\mathcal{X}$ and $\mathcal{Y}$, and the obstruction class
\[  \ob(\pi_{f,g},\pi_{f,g}^* D_A): L_{\mathcal{X} \times_{\Spec(A)} \mathcal{Y}/\Spec(A)}[-1] \to \mathcal{O}_{\mathcal{X} \times_{\Spec(A)} \mathcal{Y}}[1] \]
on the product given by Theorem \ref{thm:dag-sqzero-obs}. By Theorem \ref{thm:dag-obs-der-compat}, these classes are compatible in the sense that the following diagram commutes
\[ \xymatrix{ p^* L_{\mathcal{X}/A}[-1] \ar[rrrr]^-{p^* \ob(f,f^* D_A)} \ar[d]  & & & & \mathcal{O}_{\mathcal{X} \times_{\Spec(A)} \mathcal{Y}}[1] \ar@{=}[d] \\
			  L_{\mathcal{X} \times_{\Spec(A)} \mathcal{Y}/\Spec(A)}[-1] \ar[rrrr]^-{\ob(\pi_{f,g},\pi_{f,g}^* D_A)} & & & & \mathcal{O}_{\mathcal{X} \times_{\Spec(A)} \mathcal{Y}}[1]  \\
			  q^* L_{\mathcal{Y}/A}[-1] \ar[rrrr]^-{q^* \ob(g,g^* D_A)} \ar[u]  & & & & \mathcal{O}_{\mathcal{X} \times_{\Spec(A)} \mathcal{Y}}[1] \ar@{=}[u]. } \]
The assumption that $\pi_{f,g}$ admits a deformation across $\Spec(A) \hookrightarrow \Spec(A')$ ensures that the middle horizontal arrow in the above diagram is $0$. It follows by the commutativity that the same is true for other horizontal arrows, i.e., that $p^* \ob(f,f^* D_A) = 0$, and similarly for $Y$. To show that $\ob(f,f^* D_A) = 0$, it now suffices to show that the pullback
\[ \pi_0(\Hom_{\mathcal{X}}(L_{\mathcal{X}/\Spec(A)}[-1],k \otimes_A \mathcal{O}_\mathcal{X}[1])) \to \pi_0(\Hom_{\mathcal{X} \times_{\Spec(A)} \mathcal{Y}}(p_1^*(L_{\mathcal{X}/\Spec(A)})[-1],p_1^*(k \otimes_A \mathcal{O}_{\mathcal{X}})[1])) \]
is injective, and similarly for $Y$. Simplifying, this amounts to showing that the pullback
\[ \Ext^2_\mathcal{X}(L_{\mathcal{X}/\Spec(A)},\mathcal{O}_X) \to \Ext^2_{\mathcal{X} \times_{\Spec(A)} \mathcal{Y}}(p_1^*L_{\mathcal{X}/\Spec(A)},\mathcal{O}_{X \times Y}) \]
is injective, and similarly for $Y$. By base change (see \S \ref{derbasechange}) and adjunction, it is enough to check that the pullback
\[ \Ext^2_X(L_X,\mathcal{O}_X) \to \Ext^2_{X \times Y}(p_1^*L_X,\mathcal{O}_{X \times Y})\]
is injective, which follows from Lemma \ref{cotpull}; similarly for $Y$.

Next, we show that all fibers of $\mathfrak{p}\mathrm{rod}_{X,Y}(\tilde{A})$ are reduced to point, i.e., we will check that all possible deformations of $\mathcal{X} \times_{\Spec(A)} \mathcal{Y} \to \Spec(A)$ across $\Spec(A) \hookrightarrow \Spec(A')$ are obtained {\em uniquely} by taking products of deformations of each factor. By the above, we may assume that both $\mathcal{X} \to \Spec(A)$ and $\mathcal{Y} \to \Spec(A)$ admit deformations across $\Spec(A) \hookrightarrow \Spec(A')$. Following the same method used above to linearize the problem, we immediately reduce to verifying that the natural map
\[ \Ext^1_X(L_X,\mathcal{O}_X) \times \Ext^1_Y(L_Y,\mathcal{O}_Y) \to \Ext^1(L_{X \times Y},\mathcal{O}_{X \times Y}) \]
is bijective. This, in turn, results from the base change formula (see \S \ref{derbasechange}) and Lemma \ref{cotpull}.
\end{proof}

\begin{warning}
The conclusion of Theorem \ref{mainthm:defthy} fails if we consider both sides as functors on the larger category $\SArt_k$ of all small derived algebras rather than simply the ordinary ones. Indeed, the data of the functor $\Def_X$ on $\SArt_k$ is equivalent to the data of the object $\R\Hom(L_X,\mathcal{O}_X)$ (with its extra structure coming from Lie theory; see \cite[Theorem 5.2]{LurieICM}) at least in characteristic $0$. The failure of the product map 
 \[ \R\Hom(L_X,\mathcal{O}_X) \times \R\Hom(L_Y,\mathcal{O}_Y) \to \R\Hom(L_{X \times Y},\mathcal{O}_{X \times Y}) \]
to be an isomorphism then explains the failure of $\mathfrak{p}\mathrm{rod}_{X,Y}$ to be an equivalence as functors on $\SArt_k$. For example, let $X$ and $Y$ be genus $g$ curves for $g > 0$. One then computes that $\Ext^2(L_{X \times Y},\mathcal{O}_{X \times Y}) \neq 0$, but $\Ext^2(L_X,\mathcal{O}_X) = \Ext^2(L_X,\mathcal{O}_Y) = 0$. What this means is that $X \times Y$ has a nontrivial deformation over the derived local artian $k$-algebra $k \oplus k[1]$, while $X$ and $Y$ do not.
\end{warning}

\begin{remark}
The main input from derived algebraic geometry in our proof of Theorem \ref{mainthm:defthy} is an explicit construction of the deformation and obstruction classes associated to a morphism $\pi:Y \to X$; having access to the construction renders the functoriality transparent. It is tempting to deduce this functoriality directly from Illusie's formula for the obstruction class in terms of the cup product of the Kodaira-Spencer class for $\pi$ and the $\Ext^1$ class describing the relevant deformation of $X$. One can implement this strategy with a good understanding of the functoriality of the $\Ext^1$ class describing the relevant deformation of $X$.
\end{remark}

\begin{remark}
The proof of Theorem \ref{mainthm:defthy} has two essential parts: showing that the map $\mathfrak{p}\mathrm{rod}_{X,Y}$ is injective, and showing that $\mathfrak{p}\mathrm{rod}_{X,Y}$ is surjective. The injectivity of $\mathfrak{p}\mathrm{rod}_{X,Y}$ is a standard verification with tangent spaces that holds under fairly general hypothesis. The surjectivity of $\mathfrak{p}\mathrm{rod}_{X,Y}$, on the other hand, crucially needs the stability assumption that $X$ and $Y$ have no infinitesimal automorphisms. For example, if $X$ and $Y$ are elliptic curves, then the product variety $X \times Y$ admits a $4$-dimensional space of first order deformations, while the first order deformations which are products span a $2$-dimensional subspace (and both sides are unobstructed).
\end{remark}

\subsection{Some general results on deformations of morphisms}
\label{sec:defmorphisms}

The general theme of the results discussed in this section is the deformation theory of morphisms. Our goal is to write down some natural conditions on a morphism $\pi:Y \to X$ which allow one to transfer deformation-theoretic information from $X$ to $Y$, and vice versa. These results constitute the heart of the proof of Proposition \ref{defthyindqcov} in \S \ref{sec:qgordefthy}, but may be read independently of the rest of the paper.

We first need the following algebraic lemma:

\begin{lemma}
\label{gorextvan}
Let $R$ be a noetherian ring, and let $M$ be a finitely generated $R$-module. If $M$ vanishes at all points of codimension $\leq N$ of $\Spec(R)$ and $R$ satisfies Serre's condition $S_{N+1}$ at all points of $\Supp(M)$,  then $\Ext^i_R(M,R) = 0$ for $0 \leq i \leq N$.
\end{lemma}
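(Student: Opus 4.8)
The plan is to induct on $N$, using the hypotheses to split off a regular element from $\Ann_R(M)$ and then applying a change-of-rings argument. The crucial preliminary observation is that, under our assumptions, $\Ann_R(M)$ contains a nonzerodivisor on $R$. Indeed, let $\mathfrak{p} \in \operatorname{Ass}_R(R)$; localizing at $\mathfrak{p}$ exhibits $\mathfrak{p}R_{\mathfrak{p}}$ as an annihilator, so $\depth R_{\mathfrak{p}} = 0$. If in addition $\mathfrak{p} \in \Supp(M) = V(\Ann_R M)$, then $\mathfrak{p}$ is not a minimal prime of $R$ (minimal primes have codimension $0$, hence lie outside $\Supp(M)$ by the hypothesis that $M$ vanishes in codimension $0$), so $\dim R_{\mathfrak{p}} \geq 1$; but then $S_{N+1}$ at $\mathfrak{p}$ — in particular $S_1$ — forces $\depth R_{\mathfrak{p}} \geq 1$, a contradiction. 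Hence $\operatorname{Ass}_R(R) \cap \Supp(M) = \emptyset$, so $\Ann_R(M)$ is contained in no associated prime of $R$, and prime avoidance yields $x \in \Ann_R(M)$ that is a nonzerodivisor on $R$.

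This already settles the base case $N = 0$: for any $\varphi\colon M \to R$ and $m \in M$ we have $x\varphi(m) = \varphi(xm) = 0$, whence $\varphi(m) = 0$, so $\Ext^0_R(M,R) = \Hom_R(M,R) = 0$. For the inductive step, assume the statement over every noetherian ring for the parameter $N - 1$. The cases $0 \leq i \leq N - 1$ follow by applying the inductive hypothesis to $(R, M)$ itself with parameter $N-1$ (the hypotheses only weaken: $M$ a fortiori vanishes in codimension $\leq N - 1$, and $S_{N+1}$ implies $S_N$). It remains to handle $i = N$. Set $\bar R = R/xR$; since $x$ is a nonzerodivisor on $R$ and $xM = 0$, Rees's change-of-rings isomorphism gives $\Ext^N_R(M,R) \simeq \Ext^{N-1}_{\bar R}(M, \bar R)$, so it suffices to prove $\Ext^{N-1}_{\bar R}(M,\bar R) = 0$. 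This will again follow from the inductive hypothesis, now applied to $(\bar R, M)$ with parameter $N - 1$, once we verify its hypotheses: for $\bar{\mathfrak p} \in \Supp_{\bar R}(M)$ lifting to $\mathfrak p \in \Supp_R(M)$ with $x \in \mathfrak p$, a nonzerodivisor in a noetherian local ring drops the dimension by exactly one, so $\dim \bar R_{\bar{\mathfrak p}} = \dim R_{\mathfrak p} - 1 \geq N$, which says $M$ vanishes at all points of $\Spec \bar R$ of codimension $\leq N-1$; and $\depth \bar R_{\bar{\mathfrak p}} = \depth R_{\mathfrak p} - 1 \geq \min(N+1, \dim R_{\mathfrak p}) - 1 = \min(N, \dim \bar R_{\bar{\mathfrak p}})$, which says $\bar R$ satisfies $S_N$ at every point of $\Supp_{\bar R}(M)$. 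This closes the induction.

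The only real obstacle is bookkeeping: one must check carefully that the Serre condition and the codimension hypothesis descend correctly along $R \to R/xR$ and that the index shift in Rees's isomorphism matches the decrement of $N$. None of this is deep. Alternatively, one can bypass the induction: the quantity $\inf\{\, i : \Ext^i_R(M,R) \neq 0 \,\}$ equals the grade of $\Ann_R(M)$ on $R$, which in turn equals $\inf\{\, \depth R_{\mathfrak p} : \mathfrak p \in \Supp(M) \,\}$; the hypotheses force each such local depth to be $\geq N + 1$ (every $\mathfrak p \in \Supp(M)$ has $\dim R_{\mathfrak p} \geq N+1$, so $S_{N+1}$ gives $\depth R_{\mathfrak p} \geq N+1$), and the conclusion is immediate.
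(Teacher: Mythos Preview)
Your proof is correct, and both routes you offer (the induction via Rees's change-of-rings isomorphism, and the one-line grade computation at the end) are genuinely different from the paper's argument. The paper instead works sheaf-theoretically: setting $Z = \Supp(M)$ and $U = \Spec(R) \setminus Z$, it uses the triangle $\mathcal{O}_X \to \R j_* \mathcal{O}_U \to \R\Gamma_Z(\mathcal{O}_X)[1]$, kills $\Ext^i(\mathcal{F},\R j_*\mathcal{O}_U)$ by adjunction (since $j^*\mathcal{F} = 0$), and is left with showing $H^i_Z(\mathcal{O}_X) = 0$ for $i \leq N$, which it deduces from the depth-sensitivity of local cohomology together with the same infimum-of-local-depths formula you invoke. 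So at bottom both proofs rest on the identity $\operatorname{grade}(\Ann M, R) = \inf_{\mathfrak{p} \in \Supp M} \depth R_{\mathfrak{p}}$; the paper routes it through local cohomology, in keeping with the derived-categorical idiom of the surrounding section, whereas your argument is pure classical commutative algebra and correspondingly more self-contained. Your closing alternative is in fact the cleanest of the three: it names the relevant invariant (grade) directly and reads off the bound without any induction or triangles.
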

\begin{proof}
Let $d = \dim(R)$, let $X = \Spec(R)$, let $\mathcal{F}$ be the coherent $\mathcal{O}_X$-module defined by $M$, let $Z = \Supp(\mathcal{F})$, let $U = X  - Z$, and let $j:U \to \Spec(R)$ be the natural open immersion. Then we have the exact triangle
\[ \mathcal{O}_X \to \R j_* \mathcal{O}_U \to \mathcal{Q} \]
where $\mathcal{Q}$ is the homotopy cokernel, and is identified with the complex $\R\Gamma_Z(\mathcal{O}_X)[1]$. Applying $\R \Hom(\mathcal{F},-)$ and taking homology, we obtain a long exact sequence
\[ \dots \Ext_X^{i-1}(\mathcal{F},\mathcal{Q}) \to \Ext_X^i(\mathcal{F},\mathcal{O}_X)  \to \Ext_X^i(\mathcal{F},\R j_* \mathcal{O}_U) \dots . \]
The term on the right is $0$ by adjunction and the fact that $j^* \mathcal{F} = 0$. Hence, it suffices to show that $\Ext_X^{i-1}(\mathcal{F},\mathcal{Q}) = 0$ for $i \leq N$. By connectivity estimates, it suffices to check that  $\mathcal{Q}[N-1] \in \D^{[1,\infty]}(\mathcal{O}_X)$, i.e. it suffices to check that $\mathcal{H}^{i-1}(\mathcal{Q}) = 0$ for $i \leq N$. Translating to local cohomology, it suffices to check that $H^i_Z(\mathcal{O}_X) = 0$ for $i \leq N$. Since the codimension of any point occurring in $Z$ is at least $N+1$, the claim now follows from the assumption that $X$ satisfies Serre's condition $S_{N+1}$ at all points of $Z$ coupled with the fact that the $I$-depth of a module $P$ over a ring $R$ with ideal $I$ can be recovered as the infimum of the depths of the localizations of $P$ at all points of $\Spec(R/I)$; see, for example, \cite[\S 15, page 105]{MatCA}.
\end{proof}

The following proposition gives some conditions on a map $\pi:Y \to X$ which ensure that deformations of $X$ can be followed by deformations of $Y$ and that of $\pi$.

\begin{proposition}
\label{defcms}
Let $\pi:Y \to X$ be an essentially finitely presented morphism of noetherian Deligne-Mumford stacks. Assume that the following conditions hold:
\begin{enumerate}
\item The map  $\pi$ is \'etale on an open set $U \subset Y$ that contains all the codimension $\leq 2$ points of $Y$.
\item The stack $Y$ satisfies Serre's condition $S_3$ at points of $Z = Y - U$.
\end{enumerate}
Then $\Ext^i(L_\pi,\mathcal{O}_X) = 0$ for $i \leq 2$. If $X$ is essentially finitely presented over a field $k$, then the natural map $\Def_\pi \to \Def_X$ is an equivalence of functors on $\Art_k$.
\end{proposition}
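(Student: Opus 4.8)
The plan is to prove the $\Ext$-vanishing first and then deduce the statement about $\Def_\pi$ from it by a square-zero induction in the style of the proof of Theorem \ref{mainthm:defthy}. Throughout, $L_\pi=L_{Y/X}$ lives on $Y$, and the group in the statement is read as $\Ext^i_Y(L_\pi,\mathcal{O}_Y)$ via $\pi^*\mathcal{O}_X\simeq\mathcal{O}_Y$.

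\emph{Step 1: the vanishing.} Since $\pi$ is essentially finitely presented, $L_\pi$ is a connective complex on $Y$ with coherent homology sheaves $\mathcal{H}_j:=\mathcal{H}_j(L_\pi)$; since $\pi$ is \'etale over $U$ we have $L_\pi|_U\simeq 0$, so each $\mathcal{H}_j$ is supported on $Z=Y-U$, which by hypothesis (1) contains no point of codimension $\leq 2$. I would then truncate away the part that cannot contribute: $\R\Hom_Y(\tau_{\geq 3}L_\pi,\mathcal{O}_Y)$ is concentrated in cohomological degrees $\geq 3$, so the triangle $\tau_{\geq 3}L_\pi\to L_\pi\to\tau_{\leq 2}L_\pi$ gives $\Ext^i_Y(L_\pi,\mathcal{O}_Y)\simeq\Ext^i_Y(\tau_{\leq 2}L_\pi,\mathcal{O}_Y)$ for $i\leq 2$. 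The Postnikov filtration of $\tau_{\leq 2}L_\pi$ has graded pieces $\mathcal{H}_0,\mathcal{H}_1[1],\mathcal{H}_2[2]$, so by d\'evissage it is enough to show $\Ext^p_Y(\mathcal{H}_j,\mathcal{O}_Y)=0$ for $0\leq p\leq 2$. This is exactly Lemma \ref{gorextvan} with $N=2$, applied on an \'etale atlas of $Y$ (legitimate, as both hypotheses and conclusion are \'etale-local): $\mathcal{H}_j$ is supported on $Z$, hence vanishes at every point of codimension $\leq 2$, and $Y$ satisfies $S_3$ along $Z\supseteq\Supp(\mathcal{H}_j)$ by hypothesis (2).

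\emph{Step 2: the equivalence.} I would induct on $\dim_k A$, the case $A=k$ being trivial. Write $\tilde A\to A$ as a square-zero extension with kernel $I\simeq k$ and $\dim_k A$ smaller; since $\Def_\pi(A)\to\Def_X(A)$ is an equivalence by induction, it suffices to show the comparison map $\Def_\pi(\tilde A)\to\Def_\pi(A)\times_{\Def_X(A)}\Def_X(\tilde A)$ is an equivalence. Fix a point of the target: a flat deformation $\pi_A\colon\mathcal{Y}_A\to\mathcal{X}_A$ of $\pi$ over $A$ together with a flat lift $\mathcal{X}_{\tilde A}$ of $\mathcal{X}_A$ over $\tilde A$. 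The fiber over it is the groupoid of flat lifts $\mathcal{Y}_{\tilde A}\to\mathcal{X}_{\tilde A}$ of $\pi_A$ — a relative deformation problem: deform the morphism $\pi_A$ over the square-zero base extension $\Spec A\hookrightarrow\Spec\tilde A$ with the target deformation fixed to be $\mathcal{X}_{\tilde A}$. By the obstruction theory for such deformations (cf.\ Theorems \ref{thm:dag-sq-zero} and \ref{thm:dag-sqzero-obs}), this groupoid is nonempty iff an obstruction in $\Ext^2_{\mathcal{Y}_A}(L_{\mathcal{Y}_A/\mathcal{X}_A},\mathcal{O}_{\mathcal{Y}_A}\otimes_A I)$ vanishes, is then a torsor under the analogous $\Ext^1$, and has automorphism groups the analogous $\Ext^0$. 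Now $\mathcal{O}_{\mathcal{Y}_A}\otimes_A I\simeq\mathcal{O}_Y$ since $I$ is killed by $\mathfrak{m}_A$, and the square obtained by restricting $\pi_A$ to special fibers is derived-cartesian — here I would be careful, since a priori $\mathcal{Y}_A$ need not be flat over $\mathcal{X}_A$, but rerouting through $\Spec A$ and using flatness over $A$ shows $\mathcal{Y}_A\times^{\L}_{\mathcal{X}_A}X\simeq\mathcal{Y}_A\otimes^{\L}_A k=Y$ — so base change for cotangent complexes (\S\ref{derbasechange}) gives $L_{\mathcal{Y}_A/\mathcal{X}_A}\otimes^{\L}_{\mathcal{O}_{\mathcal{Y}_A}}\mathcal{O}_Y\simeq L_{Y/X}=L_\pi$. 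Thus, by adjunction, the three groups above become $\Ext^i_Y(L_\pi,\mathcal{O}_Y)$ for $i=2,1,0$, all zero by Step 1. The fiber is therefore contractible, the comparison map is an equivalence, and composing it with the projection to $\Def_X(\tilde A)$ (itself an equivalence since $\Def_\pi(A)\to\Def_X(A)$ is) closes the induction.

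\emph{Expected main obstacle.} Step 1 is routine once one observes that hypothesis (1) forces $L_\pi$ to be supported in codimension $\geq 3$. The substantive point is the bookkeeping in Step 2: recognizing that, after fixing the deformation of $X$, extending $\pi$ is a \emph{relative} deformation problem governed by $L_{Y/X}$ rather than $L_Y$, and then justifying the reduction of $L_{\mathcal{Y}_A/\mathcal{X}_A}$ to $L_\pi$ \emph{without} any flatness of $\pi$ — which is precisely where one must pass to derived fiber products.
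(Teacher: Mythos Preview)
Your proof is correct and follows essentially the same approach as the paper: both argue the $\Ext$-vanishing by d\'evissage along the Postnikov filtration of $L_\pi$ and an appeal to Lemma~\ref{gorextvan} with $N=2$, and both deduce the deformation statement by climbing a tower of small extensions using the obstruction theory governed by $L_{Y/X}$. If anything, your Step~2 is more careful than the paper's, which simply asserts that the equivalence ``is deduced in a standard manner'' and leaves the base-change identification $L_{\mathcal{Y}_A/\mathcal{X}_A}\otimes^{\L}\mathcal{O}_Y\simeq L_\pi$ and the non-flatness of $\pi$ unmentioned.
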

\begin{proof}
We first show the $\Ext$ vanishing claim. By the local-to-global spectral sequence for $\Ext$, it suffices to show that $\mathcal{E}\mathrm{xt}^i(L_\pi,\mathcal{O}_Y) = 0$ for $i \leq 2$. Since the latter is a local statement, we may \'etale localize on $Y$ and reduce to the case that $Y$ is a noetherian local scheme. In this local setup, we will check that $\Ext^i(L_\pi,\mathcal{O}_Y) = 0$ for $i \leq 2$. We first filter $L_\pi$ using the filtration in the derived category arising from the standard $t$-structure. This filtration of $L_\pi$ has associated graded pieces of the form $\mathcal{H}^{-j}(L_\pi)[j]$. Hence, the groups $\Ext^i(L_\pi,\mathcal{O}_Y)$ are filtered with graded pieces contained in $\Ext^{i-j}(\mathcal{H}^{-j}(L_\pi),\mathcal{O}_Y)$ for $0 \leq j \leq i$. Thus, to show $\Ext^i(L_\pi,\mathcal{O}_Y) = 0$ for $i \leq 2$, it suffices to show that $\Ext^k(\mathcal{H}^{-j}(L_\pi),\mathcal{O}_Y) = 0$ for $0 \leq k \leq 2$, and any $j$. However, this follows from the $N = 2$ case of Lemma \ref{gorextvan} once we observe that the sheaves $\mathcal{H}^{-j}(L_\pi)$ vanish at all codimension $\leq 2$ points of $Y$ by the assumption that $\pi$ is \'etale at such points.

The claim about deformation functors is deduced in a standard manner from the relative $\Ext$ vanishing proven above. Fix an $A \in \Art_k$, and consider the induced map of groupoids $f:\Def_\pi(A) \to \Def_X(A)$. The vanishing of $\Ext^2(L_\pi,\mathcal{O}_Y)$ implies that $f$ is surjective on $\pi_0$, the vanishing of $\Ext^1(L_\pi,\mathcal{O}_Y)$ implies that $f$ is injective on $\pi_0$, and the vanishing of $\Ext^i(L_\pi,\mathcal{O}_Y)$ for $i \leq 1$ implies that $f$ is bijective on $\pi_1$. To make these assertions precise, one climbs up a tower of small extensions as in the proof of Theorem \ref{mainthm:defthy}; we leave the details to the reader.
\end{proof}

Next, we study the dual question of conditions on a map $\pi:Y \to X$ that ensure that deformations of $Y$ can be followed by deformations of $X$ and $\pi$. 

\begin{proposition}
\label{deformds}
Let $\pi:Y \to X$ be a morphism of essentially finitely presented Deligne-Mumford stacks over a field $k$ satisfying $\pi_* \mathcal{O}_Y \simeq \mathcal{O}_X$ and that $\Ext^0_X(\Omega_X, \R^1 \pi_* \mathcal{O}_Y) = 0$. Then the forgetful morphism $q:\Def_\pi \to \Def_Y$ of functors on $\Art_k$ is formally smooth with discrete fibers; it is an equivalence if $X$ has no infinitesimal automorphisms.
\end{proposition}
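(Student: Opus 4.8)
The plan is to follow the template of the proof of Theorem \ref{mainthm:defthy}: reduce the assertion about deformation functors to a list of $\Ext$-vanishings by climbing a tower of square-zero extensions, using the explicit construction of deformation and obstruction classes from Appendix \ref{sec:dag} (Theorems \ref{thm:dag-sq-zero}, \ref{thm:dag-sqzero-obs} and \ref{thm:dag-obs-der-compat}) to track those classes functorially across $X$, $Y$ and $\pi$.

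The key linear-algebra input comes from the two hypotheses. Write $\mathcal{Q}$ for the homotopy cokernel of the unit map $\mathcal{O}_X \to \R\pi_*\mathcal{O}_Y$. The identification $\pi_*\mathcal{O}_Y \simeq \mathcal{O}_X$ forces $\mathcal{Q} \in \D^{\geq 1}(X)$ with $\mathcal{H}^1(\mathcal{Q}) \simeq \R^1\pi_*\mathcal{O}_Y$. Since $L_X$ is connective this gives $\Ext^i_X(L_X,\mathcal{Q}) = 0$ for $i \leq 0$, while the truncation triangle of $\mathcal{Q}$ identifies $\Ext^1_X(L_X,\mathcal{Q})$ with $\Ext^0_X(\Omega_X,\R^1\pi_*\mathcal{O}_Y)$, which vanishes by the second hypothesis. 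By adjunction $\R\Hom_Y(\pi^*L_X,\mathcal{O}_Y) \simeq \R\Hom_X(L_X,\R\pi_*\mathcal{O}_Y)$, so feeding the triangle $\mathcal{O}_X \to \R\pi_*\mathcal{O}_Y \to \mathcal{Q}$ into $\R\Hom_X(L_X,-)$ shows that pullback along $\pi$ induces isomorphisms $\Ext^i_X(L_X,\mathcal{O}_X) \xrightarrow{\ \sim\ } \Ext^i_Y(\pi^*L_X,\mathcal{O}_Y)$ for $i = 0,1$ and an injection for $i = 2$ --- exactly the kind of comparison appearing in Lemma \ref{cotpull}. Conceptually, this says that the mapping-space deformation functor governing how $\pi$ can be redeformed once compatible deformations of $X$ and $Y$ have been chosen becomes, once $\pi_*\mathcal{O}_Y \simeq \mathcal{O}_X$, indistinguishable from $\Def_X$ in the relevant range of degrees.

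Next I would run the tower argument. Fix a small extension $\tilde{A} \twoheadrightarrow A$ with kernel $k$, an object $(\mathcal{X},\mathcal{Y},\pi_A) \in \Def_\pi(A)$, and a flat deformation $\tilde{\mathcal{Y}}$ of $\mathcal{Y}$ to $\tilde{A}$; one must produce $\tilde{\mathcal{X}}$ together with $\pi_{\tilde{A}} : \tilde{\mathcal{Y}} \to \tilde{\mathcal{X}}$, uniquely. The obstruction to deforming $\mathcal{X}$ lives in $\Ext^2_X(L_X,\mathcal{O}_X)$, the obstruction to deforming $\pi_A$ once $\tilde{\mathcal{X}}$ is chosen lives in $\Ext^1_Y(\pi^*L_X,\mathcal{O}_Y)$, and these classes are compatible by Theorem \ref{thm:dag-obs-der-compat}; combining this compatibility with the injection on $\Ext^2$ and the isomorphism on $\Ext^1$ above transports the existence of $\tilde{\mathcal{Y}}$ into that of $(\tilde{\mathcal{X}},\pi_{\tilde{A}})$, which is the formal smoothness of $q$. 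The same bookkeeping one degree lower --- the isomorphism on $\Ext^1$, then on $\Ext^0$ --- shows the set of such $(\tilde{\mathcal{X}},\pi_{\tilde{A}})$ is discrete and that its objects have no nontrivial automorphisms, which is the discreteness of the fibers. Here one also uses the elementary fact that $\pi_*\mathcal{O}_Y \simeq \mathcal{O}_X$ propagates to $\pi_{A*}\mathcal{O}_{\mathcal{Y}} \simeq \mathcal{O}_{\mathcal{X}}$ by cohomology and base change, so that each $\pi_A$ is schematically dominant, hence an epimorphism; this is what pins down a deformed target and removes the automorphisms that would otherwise come from the target side. Finally, when $X$ has no infinitesimal automorphisms one has $\Ext^0_X(L_X,\mathcal{O}_X) = 0$, and together with the $\D^{\geq 1}$-vanishings this rigidifies the fiber of $q$ completely: it is nonempty, discrete and has trivial tangent space, hence contractible, and $q$ is an equivalence.

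The hard part will be the same one flagged after the proof of Theorem \ref{mainthm:defthy}: organizing the deformation and obstruction classes attached separately to $\mathcal{X}$, to $\mathcal{Y}$ and to the morphism $\pi_A$ into one commuting diagram, so that the displayed vanishings can actually be used to convert ``lift the source'' into ``lift the target and the map''. The cohomology-and-base-change step for $\pi_{A*}\mathcal{O}_{\mathcal{Y}}$, though routine, also needs some care precisely because $\R^1\pi_*\mathcal{O}_Y$ need not vanish --- it is this sheaf that the second hypothesis is designed to neutralize.
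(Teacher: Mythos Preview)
Your proposal is correct and follows essentially the same route as the paper: the paper isolates exactly your $\Ext$-comparison (their Claim proves that $\Ext^i_{\mathcal{X}}(L_{\mathcal{X}/A},\mathcal{O}_X)\to\Ext^i_{\mathcal{Y}}(\pi_A^*L_{\mathcal{X}/A},\mathcal{O}_Y)$ is an isomorphism for $i\le 1$ and injective for $i=2$, via the triangle $\mathcal{O}_X\to\R\pi_{A,*}\mathcal{O}_Y\to\tau_{\ge 1}\R\pi_{A,*}\mathcal{O}_Y$), then runs the small-extension argument exactly as you outline, using Theorem \ref{thm:dag-sqzero-obs-compat} to kill $\ob(f,f^*D_A)$ from the vanishing of $\ob(g,g^*D_A)$, modifying the chosen $\tilde{\mathcal{X}}$ by a class in $\Ext^1_X(L_X,\mathcal{O}_X)$ to absorb the obstruction $\delta$ to extending $\pi_A$, and deducing discreteness from injectivity of $\pi^{-1}\mathcal{O}_{\mathcal{X}'}\hookrightarrow\mathcal{O}_{\mathcal{Y}'}$ (your schematic dominance). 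One wording to tighten: the sentence ``these classes are compatible by Theorem \ref{thm:dag-obs-der-compat}'' conflates two separate steps --- that theorem compares the $\Ext^2$ obstructions for $\mathcal{X}$ and $\mathcal{Y}$ over $A$, not the $\Ext^2$ class for $\mathcal{X}$ with the $\Ext^1$ class for $\pi_A$; the paper treats these as two distinct applications of the $\Ext$-comparison (injection on $\Ext^2$, then surjection on $\Ext^1$), which is clearly what you intend.
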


\begin{proof}

Let $f:X \to \Spec(k)$ and $g:Y \to \Spec(k)$ denote the structure maps. Fix an $A \in \Art_k$ and a flat deformation $\pi_A:\mathcal{Y} \to \mathcal{X}$ of $\pi$ to $\Spec(A)$. Given a surjection $A' \to A$ with kernel isomorphic to $k$, we obtain a diagram
\begin{equation}
\label{diag:defmor}
 \xymatrix{ \Def_\pi(A') \ar[r]^a \ar[d]^b & \Def_Y(A') \ar[d]^c \\
			  \Def_\pi(A) \ar[r]^d & \Def_Y(A).}
\end{equation}
By induction on $\dim_k(A)$, we may assume that $d$ is surjective on $\pi_0$ and has discrete fibers. Furthermore, if $X$ has no infinitesimal automorphisms we may also assume that $d$ is an equivalence. We will show the following: (a) $a$ is surjective on $\pi_0$ and has discrete fibers, and (b) $a$ is an equivalence if $X$ has no infinitesimal automorphisms.

Fix a flat deformation $\mathcal{Y}' \to \Spec(A')$ of $\mathcal{Y} \to \Spec(A)$ corresponding to a point $p_{Y'} \in \Def_Y(A')$.  Let  $\Fib(a,p_{Y'})$ denote the homotopy fiber of the map $a$ at the point $p_{Y'}$; this $\infty$-groupoid can be thought of as parametrizing triples $(\mathcal{X}' \to \Spec(A'), \pi_{A'}:\mathcal{Y}' \to \mathcal{X}',\phi)$ where $\mathcal{X}' \to \Spec(A')$ is a flat deformation of $X$ to $\Spec(A')$, $\pi_{A'}$ is a deformation of $\pi$ to $\Spec(A')$, and $\phi$ is an identification of the restriction $(\mathcal{X}',\pi_{A'})|_A$ with $(\mathcal{X},\pi_A)$. We will now check that $\Fib(a,p_{Y'})$ is discrete and non-empty, and furthermore it is contractible when $X$ has no infinitesimal automorphisms. First, we record a relation between maps on $X$ and $Y$:

\begin{claimex}
\label{claim:extpullback}
The natural map
\begin{equation*}
\Ext^i_\mathcal{X}(L_{\mathcal{X}/A},\mathcal{O}_X)) \to \Ext^i_\mathcal{Y}(\pi_A^* L_{\mathcal{X}/A},\mathcal{O}_Y)
\end{equation*}
is an isomorphisms for $i \leq 1$ and it is injective for $i=2$.
\end{claimex}

\begin{proof}[Proof of claim]The above natural map is obtained as the composition of the adjunction  $\Ext^i_\mathcal{Y}(\pi_A^* L_{\mathcal{X}/A},\mathcal{O}_Y) \cong \Ext^i_\mathcal{X}( L_{\mathcal{X}/A},\R \pi_{A,*}\mathcal{O}_Y)$, and the natural map $\Ext^i_\mathcal{X}( L_{\mathcal{X}/A},\mathcal{O}_X) \to \Ext^i_\mathcal{X}( L_{\mathcal{X}/A},\R \pi_{A,*}\mathcal{O}_Y)$. The former one is an isomorphism, hence we are supposed to prove the claimed properties only for the latter  maps. Consider the following exact triangle guaranteed by the condition $f_* \mathcal{O}_Y \cong \mathcal{O}_X$.
\begin{equation*}
\xymatrix{
 \mathcal{O}_X \ar[r] & \R \pi_{A,*} \mathcal{O}_Y \ar[r] & \tau_{\geq 1} \R \pi_{A,*} \mathcal{O}_Y 
}
\end{equation*}
Applying $\Ext^i_{\mathcal{X}}(L_{\mathcal{X}/A}, \_  )$ implies that it is enough to show that $\Ext^{i}_{\mathcal{X}}(L_{\mathcal{X}/A},  \tau_{\geq 1} \R \pi_{A,*} \mathcal{O}_Y )=0$ for $i \leq	 1$.  Since $L_{\mathcal{X}/A}$ is supported in non-positive cohomology degrees, while $\tau_{\geq 1} \R \pi_{A,*} \mathcal{O}_Y$ only in positive degrees, this vanishing is immediate for $i \leq 0$. For $i=1$, again by cohomology degree argument, it is the same as showing that the following Ext group is zero.
\begin{equation*}
\Ext^0_{\mathcal{X}}(\mathcal{H}^0(L_{\mathcal{X}/A}), \mathcal{H}^1( \tau_{\geq 1} \R \pi_{A,*} \mathcal{O}_Y)) \cong \Ext^0_{\mathcal{X}}(\Omega_{\mathcal{X/A}}, \R^1 \pi_{A,*} \mathcal{O}_Y)
\cong  \Ext^0_X(\Omega_X, \R^1 \pi_{A,*} \mathcal{O}_Y) ,
\end{equation*}
which is exactly one of the assumptions of the proposition. This finishes the proof of Claim.
\end{proof}


To show that $\Fib(a,p_{Y'})$ is non-empty and discrete, we will first construct a deformation of $X$ to $A'$ lifting $\mathcal{X}$, and then show that this deformation admits a morphism from the chosen deformation of $Y$ to $A'$ lifting $\mathcal{Y}$.

We now show the existence of a flat deformation $\mathcal{X'} \to \Spec(A')$ of $\mathcal{X} \to \Spec(A)$ across $\Spec(A) \subset \Spec(A')$. The obstruction of the existence of such a deformation the homomorphism $\ob(f,f^* D_A): L_{\mathcal{X}/A}[-1] \to \mathcal{O}_X[1]$. Since $\mathcal{Y}$ already has such a square-zero extension, the corresponding obstruction $\ob(g,g^* D_A) : L_{\mathcal{Y}/A}[-1] \to \mathcal{O}_X[1]$ is homotopic to zero. Furthermore, by Theorem \ref{thm:dag-sqzero-obs-compat}, these two obstructions are related via the following diagram (which is commutative in a specified manner):
\begin{equation*}
\xymatrix{
\pi_A^* L_{\mathcal{X}/A}[-1] \ar[r] \ar[dr]_{\pi_A^* \ob(f, f^* D_A)} & L_{\mathcal{Y}/A}[-1] \ar[d]^{ \ob(g, g^* D_A)} \\
& \mathcal{O}_Y[1]
}
\end{equation*}
In particular, $\pi_A^* \ob(f, f^* D_A)$ is nullhomotopic.  By Claim \ref{claim:extpullback}, $\ob(f, f^* D_A)$ is also nullhomotopic, so there exists a deformation $\mathcal{X}' \to \Spec(A')$  of $\mathcal{X} \to \Spec(A)$, as claimed above; we fix one such deformation.

Next, we show that the deformation $\mathcal{X}' \to \Spec(A')$ chosen above can be modified to allow for an $A'$-linear map $\pi_{A'}:\mathcal{Y}' \to \mathcal{X}'$ extending $\pi_A$. Let $D_X : L_{\mathcal{X}} \to \mathcal{O}_X[1]$ (resp. $D_Y:L_{\mathcal{Y}} \to \mathcal{O}_Y[1]$) be the derivation corresponding to the deformation $\mathcal{X}' \to \Spec(A')$ constructed above (resp. to the deformation $\mathcal{Y}' \to \Spec(A')$ that we started with). We obtain a diagram
\begin{equation}
\label{eq:cotdiag2}
\xymatrix{
g^* L_A \ar[d]^{D_A} \ar[r] & \pi^* L_{\mathcal{X}} \ar[r] \ar[d]^{D_X} & L_{\mathcal{Y}} \ar[d]^{D_Y} \\
g^* k[1] \ar@{=}[r] &	 \pi^* \mathcal{O}_X[1] \ar@{=}[r] & \mathcal{O}_Y[1]}
\end{equation}
where the square on the left commutes in a specified way by construction of $\mathcal{X}'$, and the outer square commutes in a specified way as $\mathcal{Y}' \to \Spec(A')$ lifts $g$. We must replace $D_X$ by a suitable map so that the square on the right also commutes in manner compatible with the other two squares. The failure of the commutativity of the square on the right is measured by the difference $\delta$ of the two paths $\pi^* L_{\mathcal{X}} \to \mathcal{O}_Y[1]$ in the square on the right. Since the outer square commutes, this obstruction $\delta$ factors as a map $\pi^* L_{\mathcal{X}/A} \to \mathcal{O}_Y[1]$. By Claim \ref{claim:extpullback}, this map is obtained as the pullback of a map $\delta':L_{\mathcal{X}/A} \to \mathcal{O}_X[1]$. Replacing $D_X$ with $D'' := D_X + \delta' \circ \can$ (where $\can:L_{\mathcal{X}} \to L_{\mathcal{X}/A}$ is the canonical map) as the middle vertical arrow in diagram \eqref{eq:cotdiag2} then makes all squares commute compatibly. This derivation $D''$ and the commutativity of the left hand square give rise to to a deformation $\mathcal{X}'' \to \Spec(A')$ of $\mathcal{X} \to \Spec(A)$ across $\Spec(A) \subset \Spec(A')$, while the commutativity of the right hand square give rise to the promised map $\pi_{A'}:\mathcal{Y}' \to \mathcal{X}''$. In particular, this proves that $\Fib(a,p_{Y'})$ is non-empty. 

Next, we check that $\Fib(a,p_{Y'})$ is discrete. For a point $(\mathcal{X'} \to \Spec(A'), \pi_{A'}:\mathcal{Y}' \to \mathcal{X}',\phi)$ of this groupoid, an automorphism $\sigma$ is given by an automorphism $\sigma$ of $\mathcal{X'}$ that commutes with $\pi_{A'}$ and $\phi$. Since topoi do not change under deformations, it suffices to prove that $\sigma$ acts as the identity on $\mathcal{X}'$. By definition, the induced action on $\pi^{-1} \mathcal{O}_{\mathcal{X}'}$ commutes with the map $\pi_{A'}^*:\pi^{-1} \mathcal{O}_{\mathcal{X}'} \to \mathcal{O}_{\mathcal{Y}'}$. Since the latter map is injective (which can be checked, for instance, by filtering both sides using powers of the maximal ideal of $A'$ to reduce to the known injectivity over $k$), it follows that $\sigma = \id$,  proving discreteness.

The conclusion of the preceding paragraphs is that the map $a$ from diagram \eqref{diag:defmor} is surjective with discrete fibers, and consequently that the map $q:\Def_\pi \to \Def_Y$ is formally smooth with discrete fibers.

Finally, we show that $\Fib(a,p_{Y'})$ is contractible when $X$ has no infinitesimal automorphisms. For $i = 1,2$, let $(\mathcal{X}_i' \to \Spec(A'),\pi_{i,A'}:\mathcal{Y}' \to \mathcal{X}_i',\phi_i)$ be two possibly distinct points of $\Fib(a,p_{Y'})$; we will show they are connected. First, we show that $\mathcal{X}_1$ and $\mathcal{X}_2$ are isomorphic as deformations of $\mathcal{X} \to \Spec(A)$ across $\Spec(A) \subset \Spec(A')$; this part will not use the assumption on $X$. Let $D_i:L_{\mathcal{X}} \to \mathcal{O}_X[1]$ be the derivation classifying the deformation $\mathcal{X}_i$. Then the information of $\pi_{i,A'}$ gives, for each $i$, a commutative diagram
\[ \xymatrix{
g^* L_A \ar[d]^{D_A} \ar[r] & \pi^* L_{\mathcal{X}} \ar[r] \ar[d]^{\pi^*D_i} & L_{\mathcal{Y}} \ar[d]^{D_Y} \\
g^* k[1] \ar@{=}[r] &	 \pi^* \mathcal{O}_X[1] \ar@{=}[r] & \mathcal{O}_Y[1].} \]
The commutativity shows that $\pi^* D_1$ and $\pi^* D_2$ are homotopic maps $\pi^* L_{\mathcal{X}} \to \mathcal{O}_Y[1]$: they are both homotopic to the composition $\pi^* L_{\mathcal{X}} \to L_{\mathcal{Y}} \stackrel{D_Y}{\to} \mathcal{O}_Y[1]$. By Claim \ref{claim:extpullback}, $D_1$ and $D_2$ are also homotopic, which proves that $\mathcal{X}_1$ and $\mathcal{X}_2$ are isomorphic as deformations of $\mathcal{X} \to \Spec(A)$ across $\Spec(A) \subset \Spec(A')$. Hence, to show that $\Fib(a,p_{Y'})$ is contractible, it suffices to check: given deformations $\mathcal{X}' \to \Spec(A')$ of $\mathcal{X} \to \Spec(A)$, and $\mathcal{Y}' \to \Spec(A')$ of $\mathcal{Y} \to \Spec(A)$, there exists at most one extension of $\pi_A:\mathcal{Y} \to \mathcal{X}$ to an $A'$-map $\pi_{A'}:\mathcal{Y}' \to \mathcal{X}'$. The $\infty$-groupoid of choices for such extensions is easily verified to be a torsor for 
\[ \Omega \Hom_{\mathcal{Y}}(\pi_A^* L_{\mathcal{X}/A},\mathcal{O}_Y[1]) \simeq \Hom_{\mathcal{Y}}(\pi_A^* L_{\mathcal{X}/A},\mathcal{O}_Y).\]
By Claim \ref{claim:extpullback}, the $\infty$-groupoid on the right is equivalent to $\Hom_{\mathcal{X}}(L_{\mathcal{X}/A},\mathcal{O}_X)$. By adjunction (see the proof of Theorem \ref{mainthm:defthy}), this $\infty$-groupoid is identified with $\Hom_X(L_X,\mathcal{O}_X)$ which, by assumption, is contractible.
\end{proof}

\begin{remark}
The methods used to show Proposition \ref{deformds} also show that (under the same hypotheses) one has a natural equivalence $e:\Def_{\id_X} \times_{\Def_X} \Def_\pi \simeq \Def_\pi \times_{\Def_Y} \Def_\pi$ where we view $\Def_{\id_X}$ as a space fibered over $\Def_X$ with fibers given by the automorphism groups of the corresponding deformation, and the map $e$ is given by $(a,b) \mapsto (a \circ b, b)$.
\end{remark}

\begin{remark}
The technique used in Proposition \ref{deformds} can be used to show the following refinement (under the same hypotheses): the map $q:\Def_\pi \to \Def_Y$ has a distinguished section $s$. Indeed, in the notation of the proof of Proposition \ref{deformds}, constructing $s$ amounts to constructing a canonical base point of $\Fib(a,p_{Y'})$; such a base point is provided by the deformation of $\mathcal{X}$ coming from the derivation $D_X:L_{\mathcal{X}} \to \mathcal{O}_X[1]$ whose pullback along $\pi^*$ is the derivation $\pi^* L_{\mathcal{X}} \to L_{\mathcal{Y}} \stackrel{D_Y}{\to} \mathcal{O}_Y[1]$. We leave the details to the reader.
\end{remark}

This next lemma relates infinitesimal automorphisms of the source and target of a given morphism under favorable conditions; this will be used in the sequel to move information about discreteness of the automorphism group of a stable variety to its covering stack (see Theorem \ref{mainthm:localagain}).

\begin{proposition}
\label{infautstabvarqcov}
Let $\pi:Y \to X$ be an essentially finitely presented morphism of noetherian Delinge-Mumford stacks (over some base ring $k$). Assume the following:
\begin{enumerate}
\item The map $\pi$ is \'etale on an open subset $U \subset Y$ that contains all the codimension $1$ points of $Y$.
\item The stack $Y$ satisfies Serre's $S_2$ condition at points of $Y - U$. 
\item The map $\pi$ satisfies $\pi_* \mathcal{O}_Y \simeq \mathcal{O}_X$.
\end{enumerate}
Then the infinitesimal automorphisms of $X$ and $Y$ coincide, i.e., there is a natural isomorphism $\Ext^0(L_X,\mathcal{O}_X) \simeq \Ext^0(L_Y,\mathcal{O}_Y)$ (where all cotangent complexes are computed relative to $k$).
\end{proposition}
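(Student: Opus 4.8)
The argument runs parallel to that of Proposition~\ref{defcms}, but invokes the case $N=1$ of Lemma~\ref{gorextvan} rather than the case $N=2$. The plan is to exploit the transitivity triangle $\pi^{*}L_{X} \to L_{Y} \to L_{\pi}$ attached to $\pi : Y \to X$ and $X \to \Spec(k)$. Applying $\R\Hom_{Y}(-,\mathcal{O}_{Y})$ and passing to cohomology produces a long exact sequence
\[ \cdots \to \Ext^{i}_{Y}(L_{\pi},\mathcal{O}_{Y}) \to \Ext^{i}_{Y}(L_{Y},\mathcal{O}_{Y}) \to \Ext^{i}_{Y}(\pi^{*}L_{X},\mathcal{O}_{Y}) \to \Ext^{i+1}_{Y}(L_{\pi},\mathcal{O}_{Y}) \to \cdots, \]
so it suffices to prove: (a) $\Ext^{i}_{Y}(L_{\pi},\mathcal{O}_{Y}) = 0$ for $i = 0,1$, and (b) the pullback map $\Ext^{0}_{X}(L_{X},\mathcal{O}_{X}) \to \Ext^{0}_{Y}(\pi^{*}L_{X},\mathcal{O}_{Y})$ is an isomorphism. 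Granting these, the long exact sequence identifies $\Ext^{0}_{Y}(L_{Y},\mathcal{O}_{Y})$ with $\Ext^{0}_{Y}(\pi^{*}L_{X},\mathcal{O}_{Y})$, which is in turn identified with $\Ext^{0}_{X}(L_{X},\mathcal{O}_{X})$; all of these identifications being functorial, this yields the asserted natural isomorphism.

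For (a): by the local-to-global spectral sequence for $\Ext$ it is enough to show that $\mathcal{E}\mathrm{xt}^{q}(L_{\pi},\mathcal{O}_{Y}) = 0$ for $q \le 1$, which is an \'etale-local statement on $Y$, so we may assume $Y$ is a noetherian local scheme. The fibre of $L_{\pi} \to \tau_{\geq -1}L_{\pi}$ lies in $\D^{\le -2}$, hence $\mathcal{E}\mathrm{xt}^{q}(L_{\pi},\mathcal{O}_{Y}) \simeq \mathcal{E}\mathrm{xt}^{q}(\tau_{\geq -1}L_{\pi},\mathcal{O}_{Y})$ for $q \le 1$; and $\tau_{\geq -1}L_{\pi}$ carries a two-step filtration with graded pieces $\Omega_{\pi} = \mathcal{H}^{0}(L_{\pi})$ and $\mathcal{H}^{-1}(L_{\pi})[1]$. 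It therefore suffices to check that $\mathcal{E}\mathrm{xt}^{k}(\mathcal{H}^{-j}(L_{\pi}),\mathcal{O}_{Y}) = 0$ for $0 \le k \le 1$ and $j \in \{0,1\}$. By hypothesis~(1) the map $\pi$ is \'etale away from a closed subset of $Y$ of codimension $\ge 2$, so each coherent sheaf $\mathcal{H}^{-j}(L_{\pi})$ vanishes at all points of codimension $\le 1$ of $Y$, while by hypothesis~(2) the ring $\mathcal{O}_{Y}$ satisfies $S_{2}$ at every point of $\Supp(\mathcal{H}^{-j}(L_{\pi}))$. The case $N=1$ of Lemma~\ref{gorextvan} now yields the required vanishing.

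For (b): adjunction identifies $\Ext^{i}_{Y}(\pi^{*}L_{X},\mathcal{O}_{Y})$ with $\Ext^{i}_{X}(L_{X},\R\pi_{*}\mathcal{O}_{Y})$, and hypothesis~(3) provides an exact triangle $\mathcal{O}_{X} \to \R\pi_{*}\mathcal{O}_{Y} \to \tau_{\geq 1}\R\pi_{*}\mathcal{O}_{Y}$. Since $L_{X}$ is connective, $\R\Hom_{X}(L_{X},\tau_{\geq 1}\R\pi_{*}\mathcal{O}_{Y})$ lies in $\D^{\geq 1}$, so $\Ext^{i}_{X}(L_{X},\tau_{\geq 1}\R\pi_{*}\mathcal{O}_{Y}) = 0$ for $i \le 0$; feeding this into the long exact sequence of the triangle shows that the natural map $\Ext^{0}_{X}(L_{X},\mathcal{O}_{X}) \to \Ext^{0}_{X}(L_{X},\R\pi_{*}\mathcal{O}_{Y})$ is an isomorphism, which is exactly (b). (Note that this uses only $\pi_{*}\mathcal{O}_{Y} \simeq \mathcal{O}_{X}$, not vanishing of higher direct images.)

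The main obstacle is step (a): essentially all of the content lies in showing that the relative cotangent complex $L_{\pi}$ is invisible to $\R\Hom(-,\mathcal{O}_{Y})$ in cohomological degrees $\le 1$, and this is precisely where the codimension bound of hypothesis~(1) and the $S_{2}$ condition of hypothesis~(2) are consumed, via Lemma~\ref{gorextvan}. Once (a) is in hand, step (b) and the final assembly are formal manipulations of triangles and adjunctions. One minor technical wrinkle worth flagging is that $L_{\pi}$ need not be bounded below, which is why one first truncates at $\tau_{\geq -1}$ before running the filtration argument.
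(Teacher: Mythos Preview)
Your proof is correct and follows essentially the same route as the paper's: use the transitivity triangle for $\pi$ together with $\pi_*\mathcal{O}_Y \simeq \mathcal{O}_X$ to reduce to showing $\Ext^i(L_\pi,\mathcal{O}_Y)=0$ for $i\le 1$, and then deduce this vanishing from Lemma~\ref{gorextvan} with $N=1$ exactly as in the proof of Proposition~\ref{defcms}. You have simply made explicit what the paper leaves implicit, namely the identification $\Ext^0_Y(\pi^*L_X,\mathcal{O}_Y)\simeq \Ext^0_X(L_X,\mathcal{O}_X)$ via adjunction and connectivity of $L_X$ (your step~(b)), and the truncation argument dealing with the unboundedness of $L_\pi$.
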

\begin{proof}
The transitivity triangle for $\pi$ and the assumption that $\pi_* \mathcal{O}_Y \simeq \mathcal{O}_X$ give a long exact sequence
\[ 1 \to \Ext^0(L_\pi,\mathcal{O}_Y) \to \Ext^0(L_Y,\mathcal{O}_Y) \to \Ext^0(L_X,\mathcal{O}_X) \to \Ext^1(L_\pi,\mathcal{O}_Y) \to \dots \]
Thus, it suffices to show that $\Ext^i(L_\pi,\mathcal{O}_Y) = 0$ for $i \leq 1$. This follows by the exact same method used in the proof of Proposition \ref{defcms}; we omit the details.
\end{proof}

\subsection{The $\Q$-Gorenstein deformation theory}
\label{sec:qgordefthy}

We now return to the product map for moduli spaces of stable varieties.  Our goal is to show that the global product map $\Prod_{X,Y}$ is finite \'etale for two stable varieties $X$ and $Y$.  To understand the local behavior of this map, we cannot simply consider the local product map $\mathfrak{p}\mathrm{rod}_{X,Y}$ described in \S \ref{sec:defthyabstractprod} because Koll\'ar's condition restricts the allowable deformations on both sides. Instead, we introduce the {\em canonical covering stack} $Z^\can$ of a variety $Z$ for the reasons explained in \S \ref{sec:intro}. We simply remark here Proposition \ref{defthyindqcov} below, which equates $\Def_{X^\can \times Y^\can}$ with $\Def_{(X \times Y)^\can}$ under favorable assumptions, is proven using the results of \S \ref{sec:defmorphisms}.

\begin{definition} \label{def:canonicalcoveringstack}
Fix a field $k$, and let $X$ be an essentially finitely presented $\Q$-Gorenstein $k$-scheme satisfying Serre's condition $S_2$. Then we define its {\em canonical covering stack} $\pi:X^\can \to X$ by the formula
	\[ X^\can = [\uSpec(\oplus_{i \in \Z} \omega_X^{[i]})/\G_m] \]
where $\uSpec$ denotes the relative spectrum of a quasi-coherent $\mathcal{O}_X$-algebra, $\omega_X^{[i]}$ is the $i$-th reflexive power of the dualizing sheaf $\omega_X$, and the $\G_m$-action is given by the evident grading.
\end{definition}

We now describe some properties of canonical covering stacks.

\begin{lemma}
\label{cmsgorsch}
Fix a field $k$. Let $X$ be an essentially finitely presented $\Q$-Gorenstein $k$-scheme satisfying Serre's condition $S_2$, and let $\pi:X^\can \to X$ denote the structure morphism of the canonical cover. Then the following are true:
\begin{enumerate}
\item The stack $X^\can$ is an essentially finitely presented Artin $k$-stack satisfying Serre's condition $S_2$. If $k$ has characteristic $0$, then $X^\can$ is Deligne-Mumford.
\item The formation of $\pi$ commutes with relatively Gorenstein essentially finitely presented flat base changes $f:U \to X$.
\item The map $\pi$ is a coarse moduli space that is an isomorphism on the Gorenstein locus of $X$. 
\item The natural map $\mathcal{O}_X \to \R \pi_* \mathcal{O}_{X^\can}$ is an isomorphism.
\end{enumerate}
\end{lemma}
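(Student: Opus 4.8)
The plan is to treat the four assertions in turn, reducing each to local statements about the graded $\mathcal{O}_X$-algebra $\mathcal{R} = \oplus_{i} \omega_X^{[i]}$ and its relative spectrum over $X$. For (1), I would first note that the $\Q$-Gorenstein hypothesis says precisely that $\mathcal{R}$ is a finitely generated $\mathcal{O}_X$-algebra: locally, some reflexive power $\omega_X^{[N]}$ is a line bundle, so $\mathcal{R}$ is finite over the Veronese subalgebra $\oplus_i \omega_X^{[iN]}$, which is itself a polynomial-type extension; this makes $\uSpec(\mathcal{R})$ essentially finitely presented over $X$ and hence over $k$. The $\G_m$-quotient is then an essentially finitely presented Artin stack. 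To see it satisfies $S_2$, I would work on the smooth atlas $\uSpec(\mathcal{R}) \to X^\can$: since $X$ is $S_2$ and the map to $X$ is, away from a codimension $\geq 2$ locus, a $\G_m$-torsor over the line bundle $\omega_X^{[N]}$ (hence flat with $S_2$, even regular, fibers), $\uSpec(\mathcal{R})$ is $S_2$ on the complement of a high-codimension set; one then checks $S_2$ directly at the remaining points using that $\mathcal{R}$ is a direct sum of reflexive — hence $S_2$ — sheaves on $X$, so $\uSpec(\mathcal{R}) \to X$ is an affine morphism whose pushforward is $S_2$, and affineness plus $S_2$-ness downstairs transfers $S_2$-ness upstairs since reflexive sheaves on an $S_2$ base are $S_2$. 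Finally, in characteristic $0$ the $\G_m$-action on $\uSpec(\mathcal{R})$ has finite stabilizers — the stabilizer at a point lying over $x \in X$ is $\mu_{n}$ where $n$ is the order of $\omega_X$ in a punctured neighbourhood of $x$ — so the quotient stack is Deligne-Mumford.

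For (2), the point is that reflexive powers of the dualizing sheaf commute with relatively Gorenstein flat base change: if $f \colon U \to X$ is flat and relatively Gorenstein, then $\omega_{U} \simeq f^* \omega_X \otimes \omega_{U/X}$ with $\omega_{U/X}$ a line bundle, and since $f^*$ of a reflexive sheaf on an $S_2$ base is reflexive (Lemma \ref{lem:reflex-pullback}, noting $U$ is again $S_2$ by flatness with $S_2$ fibers), one gets $\omega_U^{[i]} \simeq f^* \omega_X^{[i]} \otimes \omega_{U/X}^{\otimes i}$ after taking reflexive hulls, the line-bundle twist being harmless. Summing over $i$ and taking relative spectra shows $U^\can \simeq U \times_X X^\can$, which is the asserted compatibility. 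For (3), I would observe that the grading gives a tautological $\G_m$-action making $\pi$ a good moduli space map — the invariant subalgebra $\mathcal{R}^{\G_m}$ is exactly the degree-zero part $\omega_X^{[0]} = \mathcal{O}_X$ — and over the Gorenstein locus $\omega_X$ is a line bundle, so $\mathcal{R}$ is the $\Z$-graded algebra $\oplus_i \omega_X^{\otimes i}$, whose relative Proj/$\G_m$-quotient is just $X$ itself; hence $\pi$ is an isomorphism there, and being a good moduli space map in char $0$ with finite stabilizers it is a coarse space globally.

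Assertion (4) is the one I expect to be the main obstacle, since it is the crucial input feeding into Proposition \ref{deformds} and Proposition \ref{infautstabvarqcov} later. The claim $\mathcal{O}_X \xrightarrow{\sim} \R\pi_* \mathcal{O}_{X^\can}$ has two parts: $\pi_* \mathcal{O}_{X^\can} = \mathcal{O}_X$, and the higher direct images vanish. The first is immediate from (3) (or directly: taking $\G_m$-invariants of $\mathcal{R}$ picks out the degree-zero summand). For the vanishing of $\R^{>0}\pi_* \mathcal{O}_{X^\can}$, I would argue as follows: the statement is local on $X$, and outside a closed set $W \subset X$ of codimension $\geq 2$ the morphism $\pi$ is an isomorphism (by (3), since $X$ is $S_2$ and $G_1$, so Gorenstein in codimension one), so $\R^{>0}\pi_* \mathcal{O}_{X^\can}$ is supported on $W$. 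One then computes $\R\pi_* \mathcal{O}_{X^\can}$ as the $\G_m$-invariant part of the pushforward along the affine morphism $\uSpec(\mathcal{R}) \to X$, composed with taking $\G_m$-cohomology; in characteristic $0$ the functor of $\G_m$-invariants is exact, so $\R\pi_* \mathcal{O}_{X^\can} \simeq (\mathcal{R})^{\G_m} = \mathcal{O}_X$ concentrated in degree zero, because $\uSpec(\mathcal{R}) \to X$ is affine and hence has no higher direct images. The subtlety to get right is the interaction of the stacky quotient with the derived pushforward — one must genuinely use that $B\G_m$ has no higher cohomology with coefficients in a representation in characteristic zero (reductivity / linear reductivity of $\G_m$), and that the atlas $\uSpec(\mathcal{R}) \to X^\can$ together with the affine map $\uSpec(\mathcal{R}) \to X$ lets one express $\R\pi_*$ as (derived $\G_m$-invariants) $\circ$ (ordinary pushforward). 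Once that bookkeeping is in place, the isomorphism $\mathcal{O}_X \simeq \R\pi_* \mathcal{O}_{X^\can}$ drops out.
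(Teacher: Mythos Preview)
Your overall strategy is correct and close in spirit to the paper's, but the paper organizes the argument around one technical move you do not make: after localizing on $X$ and trivializing $\omega_X^{[n]}$ via a section $s$, it rewrites $X^\can$ as $[Y/\mu_n]$ where $Y = \uSpec\big(\oplus_{i \in \Z/n} \omega_X^{[i]}\big)$ is \emph{finite} over $X$. Everything is then checked on this finite $\mu_n$-cover rather than on the infinite-rank $\G_m$-atlas $\uSpec(\mathcal{R})$.

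This buys two things. First, your $S_2$ step in (1) is loose as written: the implication ``$f$ affine and $f_* \mathcal{O}$ is $S_2$ as an $\mathcal{O}_X$-module $\Rightarrow$ the source is $S_2$'' is false in general (take $X = \Spec k$ and any non-$S_1$ artinian algebra upstairs), and $\mathcal{R}$ is not even coherent over $\mathcal{O}_X$. With the $\mu_n$-presentation the map $Y \to X$ is finite, and for finite surjective morphisms the $S_2$ property does transfer from the pushforward module to the scheme; since each $\omega_X^{[i]}$ is reflexive on an $S_2$ base, the finite sum $\mathcal{A}$ is $S_2$, and one is done. Your argument can be repaired by noting that, after trivializing $\omega_X^{[n]}$, one has $\uSpec(\mathcal{R}) \simeq Y \times \G_m$, but that is exactly the paper's reduction. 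Second, for (4) the paper uses linear reductivity of $\mu_n$ where you use linear reductivity of $\G_m$; both are valid, and in fact your restriction to characteristic $0$ there is unnecessary, since $\G_m$ (indeed any diagonalizable group) is linearly reductive in every characteristic. Parts (2) and (3) of your plan match the paper's arguments essentially verbatim.
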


\begin{proof}
We first observe that the formation of $X^\can \to X$ commutes with localization on $X$ as the same is true for the sheaves $\omega_X$ and their reflexive powers. By the $\Q$-Gorenstein assumption, we may pick an integer $n > 0$ such that $\omega_X^{[n]}$ is actually a line bundle. After localizing on $X$ if necessary, we can pick an isomorphism $\mathcal{O}_X \simeq \omega_X^{[n]}$ defined by a section $s \in \omega_X^{[n]}$. Such a choice allows us to define the structure of a $\mathcal{O}_X$-algebra with a $\mu_n$-action on the coherent $\mathcal{O}_X$-module
\[ \mathcal{A} = \oplus_{i \in \Z/n} \omega_X^{[i]}  \]
in the obvious way: we view $\mathcal{A}$ as the quotient algebra of the algebra $\oplus_{i \in \Z} \omega_X^{[i]}$ by the equation $s = 1$, and the $\mu_n$-action corresponds to the induced $\Z/n$-grading. We set $Y = \uSpec(\mathcal{A})$ and observe that the natural map $Y \to X^\can$ is $\mu_n$-equivariant and therefore descends to a map
\[ g:[Y/\mu_n] \to X^\can.\]
We leave it to the reader to check that $g$ is an isomorphism; the key point is that the defining map $X^\can \to B(\G_m)$ factors through $B(\mu_n) \to B(\G_m)$ via the choice of $s$, and the scheme $Y$ is simply the fiber of the resulting map $X^\can \to B(\mu_n)$.  This presentation shows that $X^\can$ is an essentially finitely presented Artin $k$-stack if $X$ is so; if $k$ has characteristic $0$, then the presentation gives rise to a Deligne-Mumford stack since $\mu_n$ is discrete. To finish checking property (1), we observe that, by construction, the sheaves $\omega_X^{[i]}$ are $S_2$. Hence, the same is true for the scheme $Y$ and the stack $X^\can$. 

Property (2) follows from the next Lemma \ref{gorbc} and Lemma \ref{lem:reflex-pullback}. Indeed, if $\mathcal{F}$ is any coherent $\mathcal{O}_U$-module and $\mathcal{L}$ is a line bundle on $U$, then there is a natural isomorphism of $U$-stacks
\[  [\uSpec\big(\oplus_{i \in \Z} (\mathcal{F} \otimes \mathcal{L})^{[i]} \big) / \G_m]  \simeq [\uSpec\big(\oplus_{i \in \Z} \mathcal{F}^{[i]}\big) / \G_m]. \]
This observation applies here with $\mathcal{F} = f^* \omega_X$ and $\mathcal{L} = \omega_f$.

For property (3), we note that $\mathcal{O}_X$ is the sheaf of $\mu_n$-invariants of $\mathcal{A}$, which shows that $\pi$ is a coarse moduli space. The claim concerning the behavior over the Gorenstein locus follows from property (2).

For property (4), observe that the formula $X^\can = [Y/\mu_n]$ identifies the $\QCoh(X^\can)$ with the category $\QCoh(Y)^{\mu_n}$ of $\mu_n$-equivariant quasi-coherent sheaves on $Y$. The functor $\pi_*:\QCoh(X^\can) \to \QCoh(X)$ is then identified with the functor $\mu_n$-invariants which is exact because $\mu_n$ is linearly reductive, showing that $\R^i \pi_* \mathcal{O}_{X^\can} = 0$ for $i > 0$. Since the claim for $i = 0$ was already shown,  the result follows.
\end{proof}

The following lemma is used in the proof of property (2) above:

\begin{lemma}
\label{gorbc}
Let $f:U \to X$ be flat relatively Gorenstein morphism between essentially finitely presented schemes over some field $k$, and assume that $X$ admits a dualizing complex $\omega_X^\bullet$. Then there is a natural isomorphism of sheaves
\[ f^* \omega_X \otimes \omega_f \simeq \omega_U. \]
\end{lemma}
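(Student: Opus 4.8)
The plan is to reduce the statement to the well-known compatibility of dualizing complexes with Gorenstein morphisms and then extract the claim about the honest sheaves $\omega_X$, $\omega_U$ from the corresponding statement about dualizing complexes. First I would recall that since $X$ admits a dualizing complex $\omega_X^\bullet$ and $f:U\to X$ is essentially finitely presented, the complex $f^!\omega_X^\bullet$ is a dualizing complex for $U$; this is part of Grothendieck duality (cf. Hartshorne's \emph{Residues and Duality}, or the Stacks Project tag on dualizing complexes and essentially finite type morphisms). For a flat relatively Gorenstein morphism, the relative dualizing complex $\omega_f^\bullet := f^!\mathcal{O}_X$ is concentrated in a single degree up to a (locally constant) shift and is there given by the relative dualizing \emph{sheaf} $\omega_f$, an invertible $\mathcal{O}_U$-module; moreover $f^!$ commutes with the twist, so $f^!\omega_X^\bullet \simeq f^*\omega_X^\bullet \otimes_{\mathcal{O}_U} \omega_f[\ast]$ where the shift $[\ast]$ accounts for the relative dimension.

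The second step is to pass from dualizing complexes to dualizing sheaves. By definition (in the normalization used in this paper, following \cite{KovacsYPG} and the conventions around semi-log canonical singularities), $\omega_X$ is the lowest nonvanishing cohomology sheaf of $\omega_X^\bullet$ — equivalently, $\omega_X = h^{-\dim X}(\omega_X^\bullet)$ when $X$ is equidimensional $S_2$, and this is the sheaf whose reflexive powers enter Definition \ref{def:canonicalcoveringstack}. Since $f$ is flat, $f^*$ is exact, so $f^*\omega_X^\bullet$ has cohomology sheaves $f^*h^i(\omega_X^\bullet)$; twisting by the invertible sheaf $\omega_f$ and shifting does not disturb which cohomology sheaf is the bottom one. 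Taking the appropriate cohomology sheaf on both sides of $\omega_U^\bullet \simeq f^*\omega_X^\bullet \otimes \omega_f[\ast]$ therefore yields exactly $\omega_U \simeq f^*\omega_X \otimes \omega_f$, and the isomorphism is natural because every step (the base-change comparison for $f^!$, the identification of $\omega_f^\bullet$ with $\omega_f[\ast]$, and the extraction of bottom cohomology) is natural.

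Alternatively — and this avoids invoking abstract duality at all — one can argue locally and explicitly: the statement is local on both $U$ and $X$, so I may assume $f$ factors as $U \hookrightarrow \mathbf{A}^n_X \to X$ with $U$ a local complete intersection in $\mathbf{A}^n_X$ (since flat relatively Gorenstein essentially finite type morphisms are, locally, complete intersections — this uses that Gorenstein $+$ flat forces the fibers to be lci over a field, and then a standard lifting argument). For the smooth morphism $\mathbf{A}^n_X\to X$ one has $\omega_{\mathbf{A}^n_X/X} = \bigwedge^n\Omega_{\mathbf{A}^n_X/X}$ and the formula $\omega_{\mathbf{A}^n_X} \simeq p^*\omega_X\otimes\omega_{\mathbf{A}^n_X/X}$ is classical; for the regular closed immersion $i:U\hookrightarrow\mathbf{A}^n_X$ of codimension $c$ one has the adjunction formula $\omega_U \simeq i^*\omega_{\mathbf{A}^n_X}\otimes\det(\mathcal{N}_{U/\mathbf{A}^n_X})$, and composing these two isomorphisms gives the desired formula, with $\omega_f$ identified as the evident combination of these relative terms. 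One must then check this is independent of the chosen factorization, which is the usual cocycle-type verification.

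The main obstacle I expect is purely bookkeeping: pinning down the degree shift and the equidimensionality/$S_2$ hypotheses needed so that ``bottom cohomology sheaf of the dualizing complex'' is literally the sheaf $\omega_X$ appearing in Definition \ref{def:canonicalcoveringstack}, and making sure the naturality is genuine rather than merely existence of an abstract isomorphism — since property (2) of Lemma \ref{cmsgorsch} uses this compatibility to identify two \emph{stacks}, a non-canonical isomorphism would not suffice. Everything else is standard Grothendieck duality or standard lci adjunction, so I would keep the write-up brief and cite the literature (e.g. \cite{GA_EGA_IV_II} for the lci structure of flat Gorenstein morphisms, and a reference for duality) rather than reproving it.
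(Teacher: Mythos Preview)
Your first approach is correct and is essentially the paper's own argument: the paper spreads out to finite type, compactifies to a proper map $\overline{f}:\overline{U}\to\overline{X}$, invokes Neeman's theorem to obtain $\overline{f}^*\omega_{\overline{X}}^\bullet \otimes \omega_{\overline{f}} \simeq \omega_{\overline{U}}^\bullet$, restricts to $U$, and then applies $\mathcal{H}^{-\dim(U)}$ using the relatively Gorenstein hypothesis. Your version skips the spreading-out and compactification by invoking $f^!$ for essentially finite type morphisms directly, which is legitimate with modern references, but the core idea --- identify $f^!\omega_X^\bullet$ with $f^*\omega_X^\bullet\otimes\omega_f$ up to shift and extract the relevant cohomology sheaf --- is identical.

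Your alternative lci approach, however, contains a genuine error: the assertion that ``Gorenstein $+$ flat forces the fibers to be lci over a field'' is false. The containment $\{\text{complete intersection}\}\subsetneq\{\text{Gorenstein}\}$ is strict already for Artinian local $k$-algebras, so a flat relatively Gorenstein morphism need not factor, even locally, as a regular closed immersion followed by a smooth map. That paragraph should be dropped; the first argument already suffices.
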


\begin{proof}
We normalize dualizing complexes so that the dualizing sheaf of a scheme sits inside the dualizing complex in homological degree equal to dimension of the scheme. After spreading out $U$ and $X$, we may assume that $f$ is a map between finite type separated $k$-schemes.  Choose compatible compactifications $U \subset \overline{U}$ and $X \subset \overline{X}$ together with a map $\overline{f}: \overline{U} \to \overline{X}$ extending $f$. By \cite[Theorem 5.4]{NeemanGD} (which applies because $\overline{U}$ and $\overline{X}$ are noetherian, and because $\R \overline{f}_*$ preserves coproducts by \cite[Lemma 1.4]{NeemanGD}), we have a canonical isomorphism
$$ \overline{f}^* \omega_{\overline{X}}^\bullet \otimes \omega_{\overline{f}} \simeq \omega_{\overline{U}}^\bullet. $$
Note that the dualizing complexes furnished by \cite{NeemanGD} agree with the usual ones for proper $k$-schemes.  Restricting to $U$, using the relatively Gorenstein assumption on $f$, and applying $\mathcal{H}^{-\dim(U)}$ now gives the desired claim.
\end{proof}

\begin{remark}
The only place where the characteristic $0$ assumption was used in Lemma \ref{cmsgorsch} was to conclude that $X^\can$ was a Deligne-Mumford stack rather than an Artin stack. This distinction is crucial to our proofs as Deligne-Mumford stacks have connective cotangent complexes, and the connectivity makes the proofs of Proposition \ref{defcms} and Proposition \ref{deformds} work.
\end{remark}

We record the following lemma here for use in Proposition \ref{defthyindqcov}.

\begin{lemma}
\label{depthprod}
Let $(R,\fm)$ and $(S,\fn)$ be two essentially finitely presented $k$-algebras over some algebraically closed field $k$, and let $(T,\fp)$ be the Zariski localization of $R \otimes_k S$ at the maximal ideal generated by $\fm$ and $\fn$. Then we have
\[ \depth_\fp(T) = \depth_\fm(R) + \depth_\fn(S) \]
\end{lemma}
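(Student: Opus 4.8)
The plan is to reduce the computation of depth to the non-vanishing of suitable $\Ext$-groups and then to relate those over $R\otimes_k S$ to the ones over $R$ and $S$ through a Künneth-type formula, using crucially that $k$ is a field. Recall first the standard characterization of depth: for a Noetherian local ring $(A,\fa)$ with residue field $\kappa$, one has $\depth_{\fa}(A)=\inf\{\,i:\Ext^i_A(\kappa,A)\neq 0\,\}$, with the infimum attained and finite. I will apply this to $(R,\fm)$, to $(S,\fn)$, and to $(T,\fp)$; since $\fp$ is by hypothesis the \emph{maximal} ideal of $R\otimes_k S$ generated by $\fm$ and $\fn$, the residue field of $T$ is $\kappa_T:=R/\fm\otimes_k S/\fn$, which is in particular a field. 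Note also that $R\otimes_k S$ is Noetherian, being a localization of a finitely generated $k$-algebra.

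The heart of the matter is the Künneth isomorphism
\[
\Ext^n_{R\otimes_k S}\bigl(\kappa_T,\,R\otimes_k S\bigr)\;\simeq\;\bigoplus_{i+j=n}\Ext^i_R\bigl(R/\fm,\,R\bigr)\otimes_k\Ext^j_S\bigl(S/\fn,\,S\bigr).
\]
To establish it, I would choose resolutions $F_\bullet\to R/\fm$ and $G_\bullet\to S/\fn$ by finite free modules over $R$ and $S$ respectively; as $-\otimes_k -$ is exact, $F_\bullet\otimes_k G_\bullet\to\kappa_T$ is a resolution of $\kappa_T$ by finite free $(R\otimes_k S)$-modules, and because the terms are finite free, $\Hom_{R\otimes_k S}(F_\bullet\otimes_k G_\bullet,\,R\otimes_k S)$ is canonically the total complex of $\Hom_R(F_\bullet,R)\otimes_k\Hom_S(G_\bullet,S)$. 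Passing to cohomology and applying the Künneth formula over the field $k$ then yields the displayed isomorphism. This is the one step I expect to require genuine care — checking that the exterior tensor product of the two resolutions is again a finite free resolution and that $\Hom$ commutes with $\otimes_k$ in the required way — while the rest of the argument is formal bookkeeping.

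To finish, note that all the $\Ext$-modules appearing are annihilated by $\fp$ and hence supported at the closed point $\fp$, so they are unaffected by localizing at $\fp$; thus $\Ext^n_T(\kappa_T,T)$ is computed by the same direct sum. Over the field $k$, a tensor product of two vector spaces is nonzero precisely when each factor is nonzero, so
\[
\depth_{\fp}(T)=\inf\{\,n:\Ext^n_T(\kappa_T,T)\neq 0\,\}=\inf\{\,i:\Ext^i_R(R/\fm,R)\neq 0\,\}+\inf\{\,j:\Ext^j_S(S/\fn,S)\neq 0\,\}=\depth_{\fm}(R)+\depth_{\fn}(S).
\]
As a sanity check on the inequality ``$\geq$'', one can alternatively observe that if $\underline{x}\subset\fm$ is a maximal $R$-regular sequence and $\underline{y}\subset\fn$ a maximal $S$-regular sequence, then flatness of $R\to R\otimes_k S$ and of $S\to (R/\underline{x})\otimes_k S$ shows that $\underline{x},\underline{y}$ forms an $(R\otimes_k S)$-regular sequence lying in $\fp$, which therefore remains regular on the localization $T$; this gives $\depth_{\fp}(T)\ge\depth_{\fm}(R)+\depth_{\fn}(S)$ directly.
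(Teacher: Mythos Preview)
Your argument is correct, but the paper takes a shorter route. Rather than computing $\Ext$-groups via a K\"unneth formula, the paper simply observes that $(R,\fm)\to(T,\fp)$ is a flat local homomorphism of noetherian local rings whose closed fiber is $T/\fm T\simeq S$, and then invokes the standard depth-addition formula for flat local maps (Matsumura, \S 21.C, Corollary~1), which gives $\depth(T)=\depth(R)+\depth(T/\fm T)=\depth(R)+\depth(S)$ in one line. Your approach, by contrast, re-derives this additivity from first principles by exhibiting an explicit K\"unneth decomposition of $\Ext^\ast_{R\otimes_k S}(\kappa_T,R\otimes_k S)$. This has the virtue of being self-contained and of actually identifying the $\Ext$-modules rather than just their vanishing range, but it is more labor-intensive and requires the auxiliary localization argument you give. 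The paper's method also makes clearer why the algebraically closed hypothesis is needed: it guarantees that the ideal generated by $\fm$ and $\fn$ is maximal and that the closed fiber is exactly $S$ (rather than some base change of it), which is what one needs to apply the depth-addition formula directly.
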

\begin{proof}
The map $(R,\fm) \to (T,\fp)$ is an essentially finitely presented flat local homomorphism of noetherian local rings with fiber $T/\fm T \simeq S$. The addition formula for depth (see \cite[\S 21.C, Corollary 1]{MatCA}) now implies the claim.
\end{proof}

Finally, we show that the deformations of products of canonical covering stacks are the same as those for the canonical covering stack of a product provided there are no infinitesimal automorphisms in sight.

\begin{proposition}
\label{defthyindqcov}
Fix a field $k$ of characteristic $0$. Let $X$ and $Y$ be two essentially finitely presented $\Q$-Gorenstein $k$-schemes that are both Gorenstein in codimension $\leq 1$ and satisfy Serre's condition $S_2$. Assume that $X \times Y$ has no infinitesimal automorphisms. Then one has a natural equivalence of deformation functors $\Def_{X^\can \times Y^\can} \to \Def_{(X \times Y)^\can}$ as functors on $\Art_k$.
\end{proposition}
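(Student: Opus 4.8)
The plan is to exhibit a natural finite morphism $\rho\colon X^\can\times Y^\can\to(X\times Y)^\can$ over $X\times Y$, check that it meets the hypotheses of Propositions \ref{defcms}, \ref{deformds} and \ref{infautstabvarqcov}, and then read off the asserted equivalence from the resulting chain $\Def_{X^\can\times Y^\can}\simeq\Def_\rho\simeq\Def_{(X\times Y)^\can}$. Throughout we use that $\ch k=0$, so that all the covering stacks in sight are Deligne--Mumford (Lemma \ref{cmsgorsch}(1)), hence have connective cotangent complexes, which is what makes Propositions \ref{defcms} and \ref{deformds} applicable.

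\textbf{Construction of $\rho$.} By the computation in the proof of Proposition \ref{prop:product} (which rests on Lemma \ref{lem:reflexive} and a comparison in relative codimension two, and applies verbatim to our $S_2$, $\Q$-Gorenstein $X$ and $Y$), one has $\omega_{X\times Y}^{[i]}\simeq p_X^*\omega_X^{[i]}\otimes p_Y^*\omega_Y^{[i]}$. Hence the $\mathcal{O}_{X\times Y}$-algebra $\bigoplus_i\omega_{X\times Y}^{[i]}$ is identified with the ``diagonal'' subalgebra $\bigoplus_i p_X^*\omega_X^{[i]}\otimes p_Y^*\omega_Y^{[i]}$ of $\bigl(\bigoplus_i p_X^*\omega_X^{[i]}\bigr)\otimes\bigl(\bigoplus_j p_Y^*\omega_Y^{[j]}\bigr)$; passing to relative spectra and quotienting by the evident torus actions produces $\rho$. (Alternatively, $X^\can\times Y^\can$ is $S_2$, carries the line bundle $\omega_{X^\can}\boxtimes\omega_{Y^\can}$ which restricts to $\omega_{X\times Y}$ over the Gorenstein locus and is an isomorphism there, so it factors canonically through $(X\times Y)^\can$.) The morphism $\rho$ is finite, and it is an \emph{isomorphism over} $(\mathrm{Gor}(X)\times Y)\cup(X\times\mathrm{Gor}(Y))$: over $\mathrm{Gor}(X)\times Y$ the sheaves $\omega_X^{[i]}$ are line bundles, so by the line-bundle-twisting isomorphism used in the proof of Lemma \ref{cmsgorsch}(2) both $(X\times Y)^\can$ and $X^\can\times Y^\can$ restrict to $\mathrm{Gor}(X)\times Y^\can$, compatibly with $\rho$; symmetrically over $X\times\mathrm{Gor}(Y)$. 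Since $X$ and $Y$ are Gorenstein in codimension $\le 1$, the complement $\mathrm{NonGor}(X)\times\mathrm{NonGor}(Y)$ of this locus has codimension $\ge 4$ in $X\times Y$, so $\rho$ is étale (indeed an isomorphism) on an open $U\subset X^\can\times Y^\can$ containing every point of codimension $\le 2$, with $Z:=(X^\can\times Y^\can)\setminus U$ lying over $\mathrm{NonGor}(X)\times\mathrm{NonGor}(Y)$.

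\textbf{Verification of hypotheses.} At a point of $X^\can\times Y^\can$ lying over a pair $(x,y)$ with $x,y$ non-Gorenstein, each of the two local rings on $X^\can$ and $Y^\can$ has depth $\ge 2$ (being $S_2$, by Lemma \ref{cmsgorsch}(1), and of dimension $\ge 2$ there); so by superadditivity of depth for tensor products over a field (Lemma \ref{depthprod} and its proof; see also \cite[\S 21]{MatCA}), the local ring on $X^\can\times Y^\can$ has depth $\ge 4$. Thus $X^\can\times Y^\can$ is $S_3$ along $Z$ (and $S_2$ everywhere), which together with the codimension statement above verifies conditions (1) and (2) of Proposition \ref{defcms} for $\rho$ (with ``$X$''$=(X\times Y)^\can$, essentially finitely presented over $k$ by Lemma \ref{cmsgorsch}(1)). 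Next, $\R\rho_*\mathcal{O}_{X^\can\times Y^\can}\simeq\mathcal{O}_{(X\times Y)^\can}$: writing $q\colon(X\times Y)^\can\to X\times Y$ and $p\colon X^\can\times Y^\can\to X\times Y$ for the structure maps, applying $\R q_*$ to the natural map and using Künneth together with Lemma \ref{cmsgorsch}(4) turns it into the identity $\R p_*\mathcal{O}_{X^\can\times Y^\can}\simeq\mathcal{O}_{X\times Y}$; since $q_*$ is exact and faithful (the stabilizers being linearly reductive in characteristic $0$) it reflects this isomorphism, and in particular $\R^1\rho_*\mathcal{O}_{X^\can\times Y^\can}=0$, so the hypotheses of Proposition \ref{deformds} hold for $\rho$. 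Finally, Proposition \ref{infautstabvarqcov} applies to $q\colon(X\times Y)^\can\to X\times Y$ (it is an isomorphism over the codimension-$\le1$ locus since $X\times Y$ is Gorenstein in codimension $\le1$; $(X\times Y)^\can$ is $S_2$; and $q_*\mathcal{O}\simeq\mathcal{O}$ by Lemma \ref{cmsgorsch}(4)), and shows that $(X\times Y)^\can$ has no infinitesimal automorphisms, since $X\times Y$ has none by hypothesis.

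\textbf{Conclusion and the main obstacle.} Proposition \ref{defcms} now gives that $\Def_\rho\to\Def_{(X\times Y)^\can}$ is an equivalence of functors on $\Art_k$, and Proposition \ref{deformds}, combined with the vanishing of infinitesimal automorphisms of $(X\times Y)^\can$ just established, gives that $\Def_\rho\to\Def_{X^\can\times Y^\can}$ is an equivalence as well. Composing the inverse of the second with the first yields the desired natural equivalence $\Def_{X^\can\times Y^\can}\xrightarrow{\ \sim\ }\Def_{(X\times Y)^\can}$. The main obstacle is the first step: producing $\rho$ with the correct stacky bookkeeping --- in particular making sure the diagonal-subalgebra construction (or the universal property of the canonical covering stack) really lands in $(X\times Y)^\can$ and not in a $\G_m$-gerbe over it --- and confirming cleanly that $\rho$ is an isomorphism away from codimension $\ge 4$. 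Once $\rho$ is in hand, the depth estimate and the identity $\R\rho_*\mathcal{O}\simeq\mathcal{O}$ are routine, and all the genuinely deformation-theoretic content is already packaged in Propositions \ref{defcms}, \ref{deformds} and \ref{infautstabvarqcov}.
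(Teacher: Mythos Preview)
Your argument is correct and follows essentially the same route as the paper: construct the map $\rho$ (the paper's $\pi$), then apply Proposition~\ref{defcms} to get $\Def_\rho\simeq\Def_{(X\times Y)^\can}$ via the codimension/\,$S_3$ check, and Proposition~\ref{deformds} together with Proposition~\ref{infautstabvarqcov} to get $\Def_\rho\simeq\Def_{X^\can\times Y^\can}$. The only notable deviation is in verifying $\R\rho_*\mathcal{O}\simeq\mathcal{O}$: you push down to $X\times Y$ and invoke K\"unneth plus faithfulness of $q_*$, whereas the paper computes $\pi_*$ directly as invariants for the antidiagonal $\G_m\subset\G_m\times\G_m$ via the classifying-stack square; both arguments are short and yield the same vanishing of $\R^1$.
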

\begin{proof}
Let $\pi:X^\can \times Y^\can \to (X \times Y)^\can$ denote the canonical map, and let $\Def_\pi$ denote the deformation functor associated to $\pi$. Forgetting information defines morphisms $a:\Def_\pi \to \Def_{X^\can \times Y^\can}$ and $b:\Def_\pi \to \Def_{(X \times Y)^\can}$. We will show that each of these maps is an equivalence.

To show that the map $b$ is an equivalence, we apply Proposition \ref{defcms}. Let $U \subset X^\can \times Y^\can$ denote the locus where $\pi$ is \'etale. We will check that $U$ contains all the codimension $2$ points, and that $X^\can \times Y^\can$ satisfies Serre's condition $S_3$ on the complement of $U$. Since both conditions are local on $X \times Y$, we localize on the latter whenever necessary. Moreover, we freely identify points on a Delinge-Mumford stack and those on the coarse space.

Both $X^\can \times Y^\can$ and $(X \times Y)^\can$ are \'etale over $X \times Y$ at the Gorenstein points of the latter which includes all the codimension $1$ points. Hence, it suffices to check that $\pi$ is \'etale at the codimension $2$ points of $X^\can \times Y^\can$. We first observe that this last claim is clear if one of $X$ or $Y$ is Gorenstein itself: the formation of $X^\can \to X$ commutes with flat relatively Gorenstein base changes on $X$ by property (2) in Lemma \ref{cmsgorsch}.  Now a point of $X^\can \times Y^\can$ is given by a product $(x,y)$. Such a product has codimension $2$ if either both $x$ and $y$ have codimension $1$, or one has codimension $2$ and the other has codimension $0$. In either case, one of the factors appearing in the product is Gorenstein, and hence the map is \'etale by the preceding observation; this verifies that $U$ contains all the codimensions $\leq 2$ points. 

Next, we check Serre's condition. The same reasoning used above also shows that a point $(x,y)$ in the complement of $U$ defines points $x \in X$ and $y \in Y$ each with codimension $\geq 2$. Property (1) from Lemma \ref{cmsgorsch} implies that each of $X^\can$ and $Y^\can$ satisfy Serre's condition $S_2$. Hence, any point $(x,y) \in X^\can \times Y^\can - U$ automatically satisfies Serre's condition $S_3$ by Lemma \ref{depthprod}. By applying Proposition \ref{defcms}, we may now conclude that $b$ is an equivalence.

To show that the map $a$ is an equivalence, we apply Proposition \ref{deformds}. In order to apply this proposition, we first need to check that $(X \times Y)^\can$ has no infinitesimal automorphisms. This follows from Proposition \ref{infautstabvarqcov} applied to the map $(X \times Y)^\can \to X \times Y$ and the assumption that $X \times Y$ has no infinitesimal automorphisms. Next, we need to verify that $\mathcal{O}_{ (X \times Y)^\can } \stackrel{\simeq}{\to} \pi_* \mathcal{O}_{X^\can \times Y^\can}$ and that $\R^1 \pi_* \mathcal{O}_{X^\can \times Y^\can} = 0$. We may localize to assume that both $X$ and $Y$ are affine. Note that we have a commutative diagram
\[ \xymatrix{X^\can \times Y^\can \ar[r]^\pi \ar[d]^f & (X \times Y)^\can \ar[d]^g \\
				B(\G_m \times \G_m) \ar[r]^{p} & B(\G_m) } \]
where the vertical maps classify the defining quotient stack structure, and the map $p$ is induced by the multiplication map $\G_m \times \G_m \to \G_m$. Since we are working with affines, the vertical maps are affine faithfully flat and finitely presented maps and, thus, the corresponding pushforward functors are faithful. Now observe that the category $\QCoh(X^\can \times Y^\can)$ can be identified as the category of $(\G_m \times \G_m)$-equivariant objects on the fiber of $f$, and similarly for $(X \times Y)^\can$. It is then easy to see that the functor $\pi_*$ is identified with the functor of taking invariants under the antidiagonal $\G_m \subset \G_m \times \G_m$: it suffices to check the analogous claim for the map $p$ since pushing forward along the vertical maps is faithful, and then the claim follows from the basic formalism of classifying stacks. In particular, since $\G_m$ is linearly reductive, the higher direct images $\R^i \pi_* \mathcal{O}_{X^\can \times Y^\can}$ vanish for $i > 0$. The claim for $i = 0$ is an easy exercise in local coordinates, and we leave this to the reader.
\end{proof}

We now may put together all of our results on deformation theory to obtain our main theorem.

\begin{theorem} \label{mainthm:localagain}
Let $X$ and $Y$ be two stable varieties over a field $k$ of characteristic $0$. Then the natural map
\[ \Prod_{X,Y}:\mathcal{M}(X) \times \mathcal{M}(Y) \to \mathcal{M}(X \times Y) \]
is finite \'etale.
\end{theorem}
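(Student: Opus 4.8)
The plan is to combine the abstract deformation-theoretic results of \S\ref{sec:defthyabstractprod}--\S\ref{sec:qgordefthy} with the Abramovich--Hassett description of the admissible deformation theory of a stable variety. By Theorem \ref{thm:proper_DM} the stacks $\mathcal{M}(X)$, $\mathcal{M}(Y)$ and $\mathcal{M}(X\times Y)$ are proper Deligne--Mumford stacks over $k$; since $\mathcal{M}(X\times Y)$ is separated over $k$ and $\mathcal{M}(X)\times\mathcal{M}(Y)$ is proper over $k$, the morphism $\Prod_{X,Y}$ is proper, and a proper quasi-finite morphism is finite. It therefore suffices to show $\Prod_{X,Y}$ is étale. Étaleness may be checked after base change to $\bar k$, and, since the non-étale locus is closed while a non-empty closed substack of a finite-type $\bar k$-stack has a closed point, it is enough to check étaleness at every closed point of the source. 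A closed point of $\mathcal{M}(X)\times\mathcal{M}(Y)$ is a pair $(X',Y')$ of stable varieties over $\bar k$, and étaleness there says precisely that the map induced by $\Prod_{X,Y}$ from $\Def^{\mathrm{adm}}_{X'}\times\Def^{\mathrm{adm}}_{Y'}$ to $\Def^{\mathrm{adm}}_{X'\times Y'}$ (where $\Def^{\mathrm{adm}}$ denotes the functor of admissible deformations, restricted to $\Art_k$) is an equivalence. As all the hypotheses used below depend only on $X'$, $Y'$ being stable varieties over $\bar k$, we may work at $([X],[Y])$ and assume $k=\bar k$.

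By the theory of canonical covering stacks (see \cite{AbramovichHassett}), for a $\Q$-Gorenstein $S_2$ variety $Z$ the functor $\Def^{\mathrm{adm}}_Z$ is naturally equivalent to the ordinary deformation functor $\Def_{Z^\can}$, and this equivalence is compatible with taking products of families: for a product of stable families the relative reflexive powers of the dualizing sheaf are exterior tensor products of those of the factors, by the formula $\omega_{Z/B}^{[k]}\simeq p_X^*\omega_{X/B}^{[k]}\otimes p_Y^*\omega_{Y/B}^{[k]}$ from the proof of Proposition \ref{prop:product}, so that $(X\times Y)^\can$ receives the natural map $\pi\colon X^\can\times Y^\can\to (X\times Y)^\can$ of Proposition \ref{defthyindqcov}, functorially in families. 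Under these identifications the map induced by $\Prod_{X,Y}$ becomes the composite
\[ \Def_{X^\can}\times\Def_{Y^\can} \xrightarrow{\ \mathfrak{p}\mathrm{rod}_{X^\can,Y^\can}\ } \Def_{X^\can\times Y^\can} \xleftarrow{\ \sim\ } \Def_{(X\times Y)^\can}, \]
where the right-hand arrow is the equivalence furnished by Proposition \ref{defthyindqcov} (realized through $\Def_\pi$). It thus remains to check that both arrows are equivalences of functors on $\Art_k$.

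For the left-hand arrow we apply Theorem \ref{mainthm:defthy} to $X^\can$ and $Y^\can$. Using the presentation $X^\can=[\uSpec(\mathcal{A})/\mu_n]$ with $\mathcal{A}=\oplus_{i\in\Z/n}\omega_X^{[i]}$ finite over $\mathcal{O}_X$ from Lemma \ref{cmsgorsch}, the map $X^\can\to X$ is finite, so $X^\can$ is proper over $k$; it is $S_2$ by Lemma \ref{cmsgorsch}(1) and is an isomorphism onto $X$ over the Gorenstein locus, which contains all generic points since a stable variety is reduced, so $X^\can$ is $R_0$ and hence reduced; and $H^0(X^\can,\mathcal{O}_{X^\can})\cong H^0(X,\mathcal{O}_X)=k$ by Lemma \ref{cmsgorsch}(4) together with the connectedness and reducedness of $X$, so $X^\can$ is connected. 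Finally $X^\can$ has no infinitesimal automorphisms, by Proposition \ref{infautstabvarqcov} applied to $\pi\colon X^\can\to X$ — whose hypotheses hold by Lemma \ref{cmsgorsch} and the fact that a stable variety is Gorenstein in codimension $1$ — together with Lemma \ref{stablevarinfaut}. The same holds for $Y^\can$, so Theorem \ref{mainthm:defthy} applies. For the right-hand arrow we apply Proposition \ref{defthyindqcov} to the schemes $X$ and $Y$: a stable variety is essentially finitely presented, $\Q$-Gorenstein, $S_2$, and Gorenstein in codimension $\leq 1$ (being semi-log canonical), and $X\times Y$ is again a stable variety by Proposition \ref{prop:product}, hence has no infinitesimal automorphisms by Lemma \ref{stablevarinfaut}. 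This shows $\Prod_{X,Y}$ is étale, and hence, by the first paragraph, finite étale.

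The main obstacle is the compatibility assertion in the second paragraph: one must verify that the string of abstract deformation-theoretic equivalences assembled above genuinely represents the map induced by $\Prod_{X,Y}$, which amounts to tracing the Abramovich--Hassett construction of the canonical covering stack through families and its interaction with fiber products. All of the genuinely substantial work — controlling obstructions under products (Theorem \ref{mainthm:defthy}) and the comparison of $\Def_{X^\can\times Y^\can}$ with $\Def_{(X\times Y)^\can}$ via Propositions \ref{defcms}, \ref{deformds} and \ref{defthyindqcov} — has already been carried out; what remains at this stage is the careful matching of hypotheses and the passage through \cite{AbramovichHassett}.
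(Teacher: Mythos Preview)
Your proof is correct and follows essentially the same route as the paper: reduce finiteness to \'etaleness via properness of the moduli stacks, then identify the induced map on deformation functors with the composite $\Def_{X^\can}\times\Def_{Y^\can}\to\Def_{X^\can\times Y^\can}\to\Def_{(X\times Y)^\can}$ and invoke Theorem~\ref{mainthm:defthy} and Proposition~\ref{defthyindqcov}. You are in fact more careful than the paper in explicitly verifying that $X^\can$ and $Y^\can$ satisfy the hypotheses of Theorem~\ref{mainthm:defthy} (properness, geometric connectedness and reducedness, absence of infinitesimal automorphisms via Proposition~\ref{infautstabvarqcov} and Lemma~\ref{stablevarinfaut}); the paper leaves these checks implicit. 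The compatibility worry you flag in your final paragraph is real but is treated at exactly the same level of detail in the paper, which simply asserts the factorization and appeals to \cite{AbramovichHassett}.
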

\begin{proof}
We first note that the morphism is well-defined by Proposition \ref{prop:product}. By Thoerem \ref{thm:proper_DM}, the stacks $\mathcal{M}(X)$ are proper Delign-Mumford stacks. Hence, by Zariski's main theorem for Deligne-Mumford stacks, it suffices to check that the map $\Prod_{X,Y}$ is \'etale at each point of $\mathcal{M}(X) \times \mathcal{M}(Y)$. Moreover, since each point of $\mathcal{M}(X) \times \mathcal{M}(Y)$ is given as a pair of stable varieties, we may without loss of generality restrict our attention to the canonical point of $\mathcal{M}(X) \times \mathcal{M}(Y)$ defined by $X$ and $Y$. In this case, by the main result of \cite{AbramovichHassett}, the deformation theory of $\mathcal{M}(X)$ at the point defined by $X$ is controlled by the functor $\Def_{X^\can}$ on $\Art_k$, and similarly for $Y$ and $X \times Y$. Hence, it suffices to check that the natural transformation
\[ \Def_{X^\can} \times \Def_{Y^\can} \to \Def_{ (X \times Y)^\can } \]
is an equivalence of functors on $\Art_k$. The result now follows from the factorization
\[ \Def_{X^\can} \times \Def_{Y^\can} \stackrel{a}{\to} \Def_{X^\can \times Y^\can} \stackrel{b}{\to} \Def_{ (X \times Y)^\can } \]
coupled with the fact that $a$ is an equivalence by Theorem \ref{mainthm:defthy}, while $b$ is an equivalence by Proposition \ref{defthyindqcov} (which applies by Lemma \ref{stablevarinfaut} and Lemma \ref{cmsgorsch}).
\end{proof}


\section{The global theory} \label{sec:globaltheory}

In this section, we prove Theorem \ref{mainthm:productirreducible} and Theorem
\ref{mainthm:global}. Note that, by the Lefschetz principle, we may assume that the
base field is the field of complex numbers, which we shall do from now on.

\subsection{Canonically polarized manifolds}

Recall that a \emph{canonically polarized manifold} is a compact complex manifold
whose canonical line bundle is ample; such a manifold is automatically a smooth
complex projective variety. We shall use two important theorems from
differential geometry -- Yau's theorem about the existence of K\"ahler-Einstein
metrics, and the Uhlenbeck-Yau theorem -- to show that any canonically polarized
manifold can be uniquely decomposed into a product of ``irreducible'' factors.

\begin{definition}
A canonically polarized manifold $X$ is called \emph{irreducible} if it
does not admit a nontrivial product decomposition $X \cong X_1 \times X_2$.
\end{definition}

It is easy to see that, in any product decomposition of a canonically polarized
manifold, every factor is again canonically polarized. By Chow's theorem, such a
decomposition is then automatically also a decomposition in the category of smooth
complex projective varieties.

\begin{theorem}
\label{theorem:main}
Let $X$ be a canonically polarized manifold. Then there is a product decomposition
\begin{equation*}
 X \cong X_1 \times \dotsb \times X_r
\end{equation*}
into irreducible canonically polarized manifolds, and this decomposition is unique
up to the order of the factors.
\end{theorem}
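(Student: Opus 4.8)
The plan is to prove existence and uniqueness separately, using the differential-geometric input advertised in the introduction. For existence, I would first invoke Yau's theorem to equip $X$ with a K\"ahler--Einstein metric $g$ with negative Einstein constant (normalize $\Ric(g) = -g$); such a metric exists precisely because $K_X$ is ample. Then the holomorphic tangent bundle $T_X$ is $g$-polystable with respect to the polarization $c_1(K_X)$ — this is where the Uhlenbeck--Yau theorem enters, identifying the Hermitian--Einstein metric $g$ on $T_X$ with a direct sum of stable summands after passing to the holonomy decomposition. Concretely, the de Rham / Berger holonomy decomposition of $(X,g)$ splits $T_X$ holomorphically and orthogonally as $T_X = \bigoplus_{i=1}^r E_i$ into parallel subbundles. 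By Beauville's theorem (the splitting theorem for the universal cover in the presence of a parallel distribution), this direct sum decomposition of $T_X$ lifts to a product decomposition of the universal cover $\widetilde{X} = \prod_i \widetilde{X}_i$, compatible with the covering group action; since each $E_i$ is integrable and the leaves are totally geodesic, and since $X$ is projective with ample canonical bundle (hence $\pi_1(X)$ acts in a controlled way), one descends this to a genuine product decomposition $X \cong X_1 \times \dotsb \times X_r$ with each $X_i$ canonically polarized. Iterating (or simply choosing the $E_i$ to be the isotypic/irreducible parallel summands from the start) yields the $X_i$ irreducible in the sense of the definition, since a further product decomposition of some $X_i$ would refine its tangent bundle's parallel decomposition, contradicting minimality.

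For uniqueness, suppose $X \cong X_1 \times \dotsb \times X_r \cong X_1' \times \dotsb \times X_s'$ are two decompositions into irreducible canonically polarized manifolds. The key point is that the K\"ahler--Einstein metric is unique (up to scale on each factor, by Yau again, together with the uniqueness clause in the Aubin--Yau theorem), so both decompositions must be compatible with the holonomy/parallel decomposition of $T_X$ under the (essentially unique) KE metric. Thus each $T_{X_i}$, pulled back to $X$, is a sum of some of the parallel summands $E_j$; irreducibility of $X_i$ forces each $T_{X_i}$ to equal a single holonomy-irreducible summand (or, more carefully, a single de Rham factor, since holonomy-irreducibility and indecomposability as a product must be matched up — one checks an irreducible canonically polarized manifold has holonomy-irreducible tangent bundle, as otherwise Beauville's theorem would split it). Hence $r = s$ and, after reindexing, $E_i$ for the first decomposition agrees with $E'_i$ for the second, giving $X_i \cong X_i'$ as the integral submanifolds (leaves through a basepoint) of the corresponding parallel distribution. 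One should be slightly careful phrasing the descent and the matching of leaves, but the essential content is that parallel distributions are a metric invariant and the metric is canonical.

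The main obstacle I anticipate is the passage from the splitting of the universal cover to a splitting of $X$ itself: Beauville's theorem directly gives $\widetilde{X} = \prod \widetilde{X}_i$, and one must argue that the deck group $\pi_1(X)$ respects this product (so that it descends). This is where ampleness of $K_X$ — equivalently, the negativity of the Ricci curvature and the resulting structure of $\pi_1$, or alternatively a direct argument using that the factors are themselves of general type with no holomorphic vector fields (Lemma \ref{stablevarinfaut}) — is essential; without it the factors could get permuted or twisted by the fundamental group. A clean way to handle this is to observe that the parallel subbundles $E_i$ are globally defined on $X$ (not just on $\widetilde X$), that they are integrable with compact leaves (the leaf space being Hausdorff because the metric is complete and the distribution parallel), and that $X$ fibers over each leaf space with the $E_i$-leaves as fibers; the fibration is then locally trivial and, being also a fibration the other way, forces a global product. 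The other point requiring care is verifying that the factors $X_i$ produced this way are projective with ample canonical bundle — but this follows from the observation already recorded in the excerpt that every factor of a product decomposition of a canonically polarized manifold is again canonically polarized, combined with Chow's theorem.
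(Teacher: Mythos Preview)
Your proposal is in the right spirit --- the paper also uses Yau's K\"ahler--Einstein metric, polystability of $T_X$ via Uhlenbeck--Yau, and Beauville's splitting theorem --- but there is a genuine gap at precisely the point you flag as the ``main obstacle'': the descent from $\tilde X$ to $X$.

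For existence you are working far too hard. The paper simply notes that existence of \emph{some} irreducible decomposition is immediate by induction on $\dim X$: if $X$ is not irreducible, write $X \cong X_1 \times X_2$ nontrivially and recurse. No differential geometry enters here.

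For uniqueness your argument hinges on the claim ``an irreducible canonically polarized manifold has holonomy-irreducible tangent bundle, as otherwise Beauville's theorem would split it.'' But Beauville's theorem only splits the universal cover $\tilde{X_i}$, with $\pi_1(X_i)$ acting diagonally on the product; it does not split $X_i$ itself unless one knows $\pi_1(X_i)$ splits compatibly. Your proposed fixes do not close this: parallel foliations on compact manifolds need not have compact leaves or Hausdorff leaf space in general, and negative Ricci curvature alone does not obviously force this without a further argument; invoking ``no holomorphic vector fields'' gives no direct grip on $\pi_1$ either.

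The paper sidesteps the descent problem by recasting uniqueness as a \emph{common refinement} lemma: given $X \cong Y \times Z \cong Y' \times Z'$, one first uses that the stable summands $\shE_i$ of $\shT_X$ are pairwise non-isomorphic (this needs $\mu(\shT_X) < 0$, via Beauville's argument that an isomorphic pair would force a flat summand), so by Lemma~\ref{lem:simple} the two product decompositions partition $\{1,\dots,n\}$ into four blocks $I_1,\dots,I_4$. Beauville's theorem then gives $\tilde X \cong M_1 \times M_2 \times M_3 \times M_4$ with $G = \pi_1(X)$ acting diagonally. The trick is that $M_1 \times M_2$ is the universal cover of $Y$, so $\pi_1(Y) \hookrightarrow \Aut(M_1) \times \Aut(M_2)$, and likewise for the other three factors. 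The two \emph{given} product structures on $X$ therefore yield $G \cong G_{12} \times G_{34} \cong G_{13} \times G_{24}$, whence an elementary group-theoretic argument gives $G \cong G_1 \times G_2 \times G_3 \times G_4$. Only now does one set $W_k = M_k/G_k$ and obtain an honest product refinement of $X$. In short, the descent is not obtained from an abstract property of $\pi_1$ of a canonically polarized manifold, but by bootstrapping from the two product decompositions already assumed to exist.
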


\subsection{Products of stable varieties}

The following statements are immediate consequences of Theorem \ref{theorem:main}.

\begin{corollary}
\label{cor:isomorphism}
If $Z$ is a canonically polarized manifold with irreducible decomposition $Z = Z_1 \times Z_2$, such that $\mathcal{M}(Z_1) \neq \mathcal{M}(Z_2)$ as components of the moduli stack of all stable varieties, then the product map
\begin{equation*}
 \Prod_{Z_1,Z_2} : \mathcal{M}(Z_1) \times \mathcal{M}(Z_2) \to \mathcal{M}(Z)
\end{equation*}
  is an isomorphism. Furthermore, the image of $\Prod_{X,Y}$ intersects $\mathcal{M}(Z)$ if and only if $X \in \mathcal{M}(Z_1)$ and $Y \in \mathcal{M}(Z_2)$ or $Y \in \mathcal{M}(Z_1)$ and $X \in \mathcal{M}(Z_2)$.
\end{corollary}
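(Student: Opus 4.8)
The plan is to read off both assertions from Theorem~\ref{theorem:main} and Theorem~\ref{mainthm:localagain}. By the latter every product map $\Prod_{A,B}$ is finite \'etale; being finite it is closed, being \'etale it is open, so its image is a nonempty clopen subset of the connected component that is its target, hence all of it, and its degree is a well-defined constant. By the Lefschetz principle we may work over $\C$, and I would prove the first statement by computing $\deg\Prod_{Z_1,Z_2}$ as the number of $\C$-points in the fibre over $[Z]\in\mathcal{M}(Z)=\mathcal{M}(Z_1\times Z_2)$. Note that, since $Z_1,Z_2$ are irreducible (and positive-dimensional) canonically polarized manifolds with $\mathcal{M}(Z_1)\neq\mathcal{M}(Z_2)$, they are in particular non-isomorphic.

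First I would identify that fibre. As $\Prod_{Z_1,Z_2}$ is finite it is representable, so the fibre over $[Z]$ is a finite reduced $\C$-scheme whose points are the isomorphism classes of triples $(C,D,\psi)$ with $C\in\mathcal{M}(Z_1)$, $D\in\mathcal{M}(Z_2)$ and an isomorphism $\psi\colon C\times D\xrightarrow{\sim}Z$. Since $Z=Z_1\times Z_2$ is smooth, so is $C\times D$, and therefore so are $C$ and $D$ (regularity descends along the faithfully flat local maps $\mathcal{O}_{C,c}\to\mathcal{O}_{C\times D,(c,d)}$); thus $C$ and $D$ are themselves canonically polarized manifolds. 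Decomposing $C$ and $D$ into irreducible factors and comparing the resulting decomposition of $C\times D$ with $Z=Z_1\times Z_2$, the uniqueness in Theorem~\ref{theorem:main} forces the two irreducible factors $Z_1,Z_2$ to be distributed among $C$ and $D$; a dimension count (each of $C,D$ is positive-dimensional, so contributes at least one factor, and there are only two in all) shows $C$ and $D$ are each irreducible and $\{C,D\}\cong\{Z_1,Z_2\}$. As $C\in\mathcal{M}(Z_1)$ and $\mathcal{M}(Z_1)\neq\mathcal{M}(Z_2)$, necessarily $C\cong Z_1$ and $D\cong Z_2$. Hence every triple is isomorphic to one with $C=Z_1$, $D=Z_2$, and in this form $\psi$ ranges over $\Aut(Z_1\times Z_2)$ with two choices identified iff they differ by right composition with a member of $\Aut(Z_1)\times\Aut(Z_2)$; therefore $\deg\Prod_{Z_1,Z_2}=[\Aut(Z_1\times Z_2):\Aut(Z_1)\times\Aut(Z_2)]$.

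The step I expect to be the only real obstacle is then to show that this index is $1$, i.e.\ that $\Aut(Z_1\times Z_2)=\Aut(Z_1)\times\Aut(Z_2)$. This is where the differential-geometric content enters: the proof of Theorem~\ref{theorem:main} exhibits the irreducible decomposition as a canonical one -- read off from the K\"ahler--Einstein metric, equivalently from the polystable decomposition of the tangent bundle -- so it is preserved by every automorphism of $Z_1\times Z_2$; such an automorphism therefore permutes the two irreducible factors, and since $Z_1\not\cong Z_2$ it cannot interchange them, hence preserves each and is of the form $\gamma_1\times\gamma_2$. (If one wanted a self-contained write-up, one would extract exactly this functoriality statement in the course of proving Theorem~\ref{theorem:main} and record it as a lemma.) Granting this, $\deg\Prod_{Z_1,Z_2}=1$, and a degree-one finite \'etale morphism onto the connected stack $\mathcal{M}(Z)$ is an isomorphism.

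For the last assertion I would argue as follows. Since $\Prod_{X,Y}$ is surjective onto the component $\mathcal{M}(X\times Y)$, its image meets $\mathcal{M}(Z)$ iff $\mathcal{M}(X\times Y)=\mathcal{M}(Z)$, i.e.\ iff $X\times Y\in\mathcal{M}(Z)$. If $X\in\mathcal{M}(Z_1)$ and $Y\in\mathcal{M}(Z_2)$ (or the reverse), then the product map on the connected source $\mathcal{M}(Z_1)\times\mathcal{M}(Z_2)$ sends it into a single component containing both $[X\times Y]$ and $[Z_1\times Z_2]=[Z]$, so $\mathcal{M}(X\times Y)=\mathcal{M}(Z)$. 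Conversely, if $X\times Y\in\mathcal{M}(Z)$, then $[Z]=[Z_1\times Z_2]$ lies in the image of $\Prod_{X,Y}$, so $Z_1\times Z_2\cong C\times D$ with $C\in\mathcal{M}(X)$ and $D\in\mathcal{M}(Y)$; now $C$ and $D$ are smooth (as above), and the same decomposition argument applied to $C\times D\cong Z_1\times Z_2$ gives $\{C,D\}\cong\{Z_1,Z_2\}$, whence $\mathcal{M}(X)$ and $\mathcal{M}(Y)$ are $\mathcal{M}(Z_1)$ and $\mathcal{M}(Z_2)$ in some order. Apart from the canonicity input flagged above, every step here is formal.
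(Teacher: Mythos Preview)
Your argument is correct and matches the paper's intent: the paper gives no proof beyond declaring the corollary an ``immediate consequence of Theorem~\ref{theorem:main}'', and what you have written is exactly the unpacking of that remark, combining the finite \'etale property of $\Prod$ (Theorem~\ref{mainthm:localagain}) with the uniqueness of the irreducible decomposition. You are right that the only substantive point is the identity $\Aut(Z_1\times Z_2)=\Aut(Z_1)\times\Aut(Z_2)$ for $Z_1\not\cong Z_2$, and you correctly locate its justification in the proof of Theorem~\ref{theorem:main}: the common-refinement Lemma in \S\ref{sec:proof} (not merely the statement that the isomorphism classes of factors agree) shows that any two product decompositions of $Z$ are compatible as decompositions, so an automorphism must permute the actual factors, not just their isomorphism types.
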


\begin{corollary}
\label{cor:exact_description}
If $X$ is a canonically polarized manifold with irreducible decomposition 
\begin{equation*}
X \cong \prod_{i=1}^r  \left( \prod_{j=1}^{n_i} X_{ij} \right)
\end{equation*}
such that $\mathcal{M}(X_{ij}) = \mathcal{M}(X_{i'j'})$ if and only if $i=i'$, then 
\begin{equation*}
 \mathcal{M}(X) \cong \prod_{i=1}^r \left[ \factor{\mathcal{M}(X_{i1})^{\times n_i}}{S_{n_i}} \right] ,
\end{equation*}
where the symmetric group $S_{n_i}$ acts on $\mathcal{M}(X_{i1})^{\times s_i}$ by
permuting the factors, and the quotient is taken in the stack sense.
\end{corollary}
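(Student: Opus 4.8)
The plan is to realize the right-hand side as the target, up to canonical $2$-isomorphism, of a natural finite \'etale morphism to $\mathcal{M}(X)$, and then to show that this morphism has degree $1$ by inspecting its fiber over the single point $[X]$, where Theorem \ref{theorem:main} becomes directly applicable. Write $\mathcal{M}_i := \mathcal{M}(X_{i1})$, so that $\mathcal{M}(X_{ij}) = \mathcal{M}_i$ for all $j$ by hypothesis. Iterating the product map and invoking Theorem \ref{mainthm:localagain} (composites and base changes of finite \'etale morphisms are finite \'etale) produces a finite \'etale morphism $\Prod \colon \prod_{i=1}^{r} \mathcal{M}_i^{\times n_i} \to \mathcal{M}(X)$. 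Permuting the factors within each block defines an action of $G := \prod_{i} S_{n_i}$ on the source, and the reordering isomorphisms for products of varieties exhibit $\Prod$ as the composite $\phi \circ q$ for a morphism
\[ \phi \colon \prod_{i=1}^{r} \bigl[ \mathcal{M}_i^{\times n_i} / S_{n_i} \bigr] \to \mathcal{M}(X), \]
where $q$ is the quotient map; the coherence needed to descend is routine and inherited from that of products of varieties. Since $q$ is finite \'etale and an fppf cover (being a torsor under the finite constant group scheme $G$), $\Prod$ is finite \'etale, and all the stacks in sight are proper by Theorem \ref{thm:proper_DM}, a standard descent argument shows that $\phi$ is finite \'etale as well. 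As $\mathcal{M}(X)$ is connected, $\phi$ has a well-defined degree $d \geq 1$, and the statement reduces to proving $d = 1$.

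I would compute $d$ as the number of points of the geometric fiber $\phi^{-1}([X])$. A point of this fiber consists of a family of unordered tuples $([W_{i1}], \dots, [W_{i n_i}])$ with $[W_{ij}] \in \mathcal{M}_i$, together with an isomorphism $\prod_{i,j} W_{ij} \cong X$. Because $X$ is a canonically polarized manifold and both smoothness and ampleness of the canonical bundle pass to the factors of a product, each $W_{ij}$ is again a canonically polarized manifold, so Theorem \ref{theorem:main} applies to it. The crucial step, which I would isolate as a lemma, is that any $W \in \mathcal{M}_i$ which is a canonically polarized manifold is already irreducible: if $W$ admitted an irreducible decomposition into $\geq 2$ factors $W^{(k)}$, then $\prod_{k} \mathcal{M}(W^{(k)}) \to \mathcal{M}(W) = \mathcal{M}_i$ would be finite \'etale by Theorem \ref{mainthm:localagain}, hence surjective, so the irreducible variety $X_{i1} \in \mathcal{M}_i$ would acquire a decomposition $X_{i1} \cong \prod_{k} V^{(k)}$ with each $V^{(k)}$ positive-dimensional and smooth (being a factor of the smooth variety $X_{i1}$), contradicting the irreducibility of $X_{i1}$. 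Granting the lemma, $X \cong \prod_{i,j} W_{ij}$ is an irreducible decomposition of $X$, so by the uniqueness in Theorem \ref{theorem:main} it agrees with $X \cong \prod_{i,j} X_{ij}$ after a permutation of the factors; since $\mathcal{M}(W_{ij}) = \mathcal{M}_i$ and the hypothesis forces every $X_{i'j'}$ with $\mathcal{M}(X_{i'j'}) = \mathcal{M}_i$ to satisfy $i' = i$, this permutation preserves each block. Hence $([W_{ij}])_{j}$ is an $S_{n_i}$-permutation of $([X_{ij}])_{j}$ for every $i$, and the fiber consists of the single class of $([X_{ij}])_{i,j}$ modulo $G$.

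To conclude that $d = 1$, I would check that this fiber point is reduced with matching automorphisms. Using the standard description of automorphisms in a quotient stack, the automorphism group of the class of $([X_{ij}])_{i,j}$ modulo $G$ in $\prod_{i} [\mathcal{M}_i^{\times n_i}/S_{n_i}]$ consists of block-preserving permutations carrying each $X_{ij}$ to an isomorphic factor, together with a choice of such isomorphisms; under $\phi$ this maps to $\Aut(X) = \Aut\bigl( \prod_{i,j} X_{ij} \bigr)$, and the uniqueness of irreducible decompositions (Theorem \ref{theorem:main}), together with the observation that $X_{ij} \cong X_{i'j'}$ forces $i = i'$, identifies $\Aut(X)$ precisely with that group. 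Thus $\phi$ induces an isomorphism on automorphism groups over $[X]$; the geometric fiber is therefore a single reduced point, $d = 1$, and a degree-one finite \'etale morphism of Deligne-Mumford stacks is an isomorphism.

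I expect the main obstacle to be the passage from the clean stack-level statement to this pointwise analysis of $\phi^{-1}([X])$: specifically, the lemma that smooth members of $\mathcal{M}(X_{i1})$ remain irreducible (which is exactly what prevents the irreducible pieces of the $W_{ij}$ from migrating into a wrong moduli component) and the careful identification of the quotient-stack automorphism group with $\Aut(X)$ through unique factorization. By contrast, the construction of $\phi$ and the proof that it is finite \'etale are essentially formal, given Theorem \ref{mainthm:localagain} together with standard facts about quotient stacks and descent of finite \'etale morphisms.
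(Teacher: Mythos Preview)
Your argument is correct and is precisely the natural elaboration of what the paper intends: the paper states this corollary as an ``immediate consequence'' of Theorem~\ref{theorem:main} and gives no proof, so your write-up is filling in the details the authors omitted. The structure---descend the iterated product map to a finite \'etale $\phi$ out of the symmetric quotient, then compute the degree over the single point $[X]$ using unique factorization---is exactly what is implicit in the paper, and your auxiliary lemma (smooth members of $\mathcal{M}(X_{i1})$ are irreducible, via surjectivity of the finite \'etale product map onto the connected $\mathcal{M}_i$) is the right way to control the fiber.

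One small point worth making explicit, since you invoke ``a degree-one finite \'etale morphism of Deligne-Mumford stacks is an isomorphism'': this uses that $\phi$ is \emph{representable}, which in turn amounts to injectivity of $\Aut(y)\to\Aut(\phi(y))$ at every geometric point. You effectively prove bijectivity of this map over $[X]$; it is worth noting separately that injectivity holds at \emph{every} point (a block-permutation together with factor isomorphisms that induces the identity on a product of positive-dimensional varieties must itself be trivial), so that the fiber really is a scheme and the point-count gives the degree. With that said, your automorphism analysis already contains everything needed, and the argument goes through as written.
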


We also have the following general formula for the degree of the fibers.

\begin{proposition}
\label{prop:product_map_fiber}
If $X$ and $Y$ are stable schemes, then the fiber of the map $\Prod_{X,Y} : \mathcal{M}(X) \times \mathcal{M}(Y) \to \mathcal{M}(X \times Y)$ over $X \times Y$ contains as many points as 
\begin{equation*}
\sum_{V \in \mathcal{M}(X), W \in \mathcal{M}(Y), V \times W \cong X \times Y } 
\left| \factor{\Aut(X \times Y)}{\Aut(V) \times \Aut(W)} \right|  
\end{equation*}
\end{proposition}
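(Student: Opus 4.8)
The plan is to realize the fiber of $\Prod_{X,Y}$ over the point $[X\times Y]$ as an explicit finite groupoid and to count its objects up to isomorphism. Concretely, form the $2$-fiber product
$\mathcal{F} := \bigl(\mathcal{M}(X)\times\mathcal{M}(Y)\bigr)\times_{\mathcal{M}(X\times Y)}\Spec k$,
where $\Spec k \to \mathcal{M}(X\times Y)$ is the point corresponding to $X\times Y$. Unwinding the definition of a fiber product of stacks, an object of $\mathcal{F}$ is a triple $(V,W,\phi)$ with $V$ a stable scheme in the component $\mathcal{M}(X)$, $W$ a stable scheme in $\mathcal{M}(Y)$, and $\phi\colon V\times W \xrightarrow{\sim} X\times Y$ an isomorphism of schemes; a morphism $(V,W,\phi)\to(V',W',\phi')$ is a pair of isomorphisms $(\alpha\colon V\xrightarrow{\sim} V',\ \beta\colon W\xrightarrow{\sim} W')$ satisfying $\phi'\circ(\alpha\times\beta)=\phi$. (Since $\omega_Z$ is intrinsic to a stable scheme $Z$, any isomorphism of underlying schemes automatically respects the polarization, so these automorphism groups are the usual ones, and agree with the automorphism groups of the corresponding points of the moduli stacks.)

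First I would record that $\mathcal{F}$ is discrete with trivial automorphism groups: an automorphism of $(V,W,\phi)$ is a pair $(\alpha,\beta)\in\Aut(V)\times\Aut(W)$ with $\alpha\times\beta = \id_{V\times W}$, hence $(\alpha,\beta)=(\id,\id)$. (Independently, Theorem \ref{mainthm:localagain} tells us $\Prod_{X,Y}$ is finite \'etale, so its fiber over a point is a finite set; this also makes the sum below automatically finite.) Thus ``the number of points of the fiber'' means $|\pi_0(\mathcal{F})|$, and it suffices to count isomorphism classes of triples.

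Next I would stratify $\pi_0(\mathcal{F})$ by the isomorphism class of the underlying pair $(V,W)\in\mathcal{M}(X)\times\mathcal{M}(Y)$. Such a pair supports an object of $\mathcal{F}$ precisely when $V\times W\cong X\times Y$; conversely, for any $V\in\mathcal{M}(X)$ and $W\in\mathcal{M}(Y)$ the product of the families witnessing $V\sim X$ and $W\sim Y$ exhibits $V\times W$ as a point of $\mathcal{M}(X\times Y)$, so the only constraint is $V\times W\cong X\times Y$. For a fixed such pair, the objects of $\mathcal{F}$ lying over it are the isomorphisms $\phi\colon V\times W\xrightarrow{\sim} X\times Y$, and two of them are isomorphic in $\mathcal{F}$ exactly when they differ by precomposition with an element of $\Aut(V)\times\Aut(W)\subset\Aut(V\times W)$. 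The set of such $\phi$ is a non-empty torsor under $\Aut(X\times Y)$ acting by post-composition; choosing a base point $\phi_0$ identifies it with $\Aut(X\times Y)$, and under this identification the precomposition action of $\Aut(V)\times\Aut(W)$ becomes right translation by the subgroup $\phi_0\bigl(\Aut(V)\times\Aut(W)\bigr)\phi_0^{-1}$, which is isomorphic to $\Aut(V)\times\Aut(W)$ since the latter embeds in $\Aut(V\times W)$. All these automorphism groups are finite: the relevant schemes are stable, hence have no infinitesimal automorphisms by Lemma \ref{stablevarinfaut}, while $\underline{\Aut}$ of each is a closed subgroup scheme of a suitable $\PGL_n$, so finiteness follows in characteristic $0$. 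Consequently the number of isomorphism classes of objects over $(V,W)$ is the index $|\Aut(X\times Y)|/(|\Aut(V)|\cdot|\Aut(W)|)=\bigl|\factor{\Aut(X\times Y)}{\Aut(V)\times\Aut(W)}\bigr|$.

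Summing these contributions over all isomorphism classes of pairs $(V,W)$ with $V\in\mathcal{M}(X)$, $W\in\mathcal{M}(Y)$ and $V\times W\cong X\times Y$ then yields the asserted formula. I expect the only delicate point to be the careful unwinding of the $2$-fiber product --- in particular, verifying that morphisms in $\mathcal{F}$ are exactly the \emph{product} automorphisms $\alpha\times\beta$ compatible with the chosen trivializations --- together with the elementary observation that $\Aut(V)\times\Aut(W)$ acts freely on the $\Aut(X\times Y)$-torsor of such trivializations, so that the orbit count is the clean group index above; once these are set up correctly the remaining computation is routine.
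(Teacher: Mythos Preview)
Your argument is correct: unwinding the $2$-fiber product over the point $[X\times Y]$, stratifying by the isomorphism class of the pair $(V,W)$, and then counting the $\Aut(V)\times\Aut(W)$-orbits on the $\Aut(X\times Y)$-torsor of isomorphisms $V\times W\xrightarrow{\sim} X\times Y$ is exactly the right computation, and your justification that the action is free (via the injectivity of $\Aut(V)\times\Aut(W)\hookrightarrow\Aut(V\times W)$) is fine. The paper itself states this proposition without proof, so there is nothing further to compare against.
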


\subsection{Polystability of the tangent bundle}

The goal of this and the next section is to show that the the tangent bundle of a
canonically polarized manifold is polystable (with respect to the ample line bundle
$\omega_X$). 

\begin{theorem} 
\label{thm:canonically_polarized_tangent}
If $X$ is a canonically polarized manifold, then $\shT_X$ is polystable with
respect to $\omega_X$. More precisely, $\shT_X$ uniquely decomposes into a
direct sum of stable, pairwise non-isomorphic subbundles of slope $\mu(\shT_X)$.
\end{theorem}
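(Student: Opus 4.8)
The plan is to deduce this statement from two classical theorems of differential geometry together with standard facts about polystable bundles. First I would invoke Yau's theorem: since $X$ is canonically polarized, the ample class $c_1(\omega_X) = c_1(K_X) = -c_1(X)$ admits a K\"ahler--Einstein metric $\omega$ with negative Einstein constant, normalized so that $\Ric(\omega) = -\omega$. The associated Hermitian metric on $\shT_X$ is then Hermite--Einstein with respect to the polarization $\omega_X$, because the Ricci form is (a constant multiple of) the trace of the curvature of $\shT_X$. The Kobayashi--Hitchin correspondence, i.e.\ the theorem of Uhlenbeck--Yau (and Donaldson), then tells us that a holomorphic bundle admitting a Hermite--Einstein metric is polystable with respect to $\omega_X$; applying this to $\shT_X$ gives polystability.

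Next I would upgrade polystability to the precise structural statement. By definition of polystability, $\shT_X$ decomposes as a direct sum $\shT_X \cong \bigoplus_k \shE_k^{\oplus m_k}$ of stable bundles $\shE_k$, each of slope $\mu(\shT_X)$, with the $\shE_k$ pairwise non-isomorphic. The remaining claim is that all multiplicities $m_k$ equal $1$. Here I would use the assumption of ampleness of $K_X$ once more, via a vanishing argument: a repeated stable factor $\shE^{\oplus 2}$ would contribute a two-dimensional space of endomorphisms of $\shT_X$, hence (after tracing out the identity) a nonzero global section of $\mathcal{E}\mathrm{nd}(\shT_X)$ that is traceless, equivalently a nonzero element of $H^0(X, \mathcal{E}\mathrm{nd}_0(\shT_X))$. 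But $H^0(X, \mathcal{E}\mathrm{nd}(\shT_X))$ can be controlled: one approach is to note that $\End(\shT_X)$ is again a polystable bundle of slope $0$ carrying a Hermite--Einstein metric, so its global sections are parallel and hence correspond to a splitting; combined with the uniqueness clause this forces the $\shE_k$ to appear with multiplicity one. Alternatively, and perhaps more cleanly, I would observe that a direct sum decomposition of $\shT_X$ with a repeated summand would, via Beauville's theorem on the structure of manifolds with reducible tangent bundle, produce a corresponding product decomposition of the universal cover of $X$ with repeated factors; but the uniqueness of the de Rham decomposition then already encodes that the stable summands are determined, and a direct cohomological computation on each factor (Bochner/Kodaira-type vanishing using ampleness of the canonical bundle on each factor) rules out extra automorphisms.

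I expect the main obstacle to be the passage from bare polystability to the sharp ``multiplicity one, pairwise non-isomorphic'' formulation, since bare polystability a priori permits repeated stable factors. The cleanest route is probably to show directly that $H^0(X, \mathcal{E}\mathrm{nd}(\shT_X))$ is one-dimensional, which would immediately force all multiplicities to be $1$ by the theory of polystable bundles (a polystable bundle with one-dimensional endomorphism algebra is stable, and more generally the endomorphism algebra of $\bigoplus \shE_k^{\oplus m_k}$ is $\prod_k \mathrm{Mat}_{m_k}(\C)$, which is $\C$ exactly when there is a single factor of multiplicity one --- so the genuinely useful input is that \emph{traceless} endomorphisms vanish, i.e.\ $\shT_X$ is \emph{simple up to the splitting into non-isomorphic pieces}). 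Establishing $H^0(X,\mathcal{E}\mathrm{nd}(\shT_X)) = \C^{(\#\text{factors})}$ is where the negativity of the canonical bundle must really be used, presumably through a Bochner-type argument applied to the Hermite--Einstein metric on $\End(\shT_X)$, which has slope zero and hence degree zero on $X$; its sections are then $\nabla$-parallel, and parallel endomorphisms of a bundle with irreducible holonomy pieces decompose the bundle into the eigenbundles, matching the stable summands and giving exactly multiplicity one. The rest of the argument --- Yau's theorem, the Kobayashi--Hitchin correspondence, and the bookkeeping of the decomposition --- is standard and I would present it briskly.
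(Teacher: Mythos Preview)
Your derivation of polystability via Yau's theorem and the Uhlenbeck--Yau (Kobayashi--Hitchin) correspondence is exactly what the paper does, so that part is fine.

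The gap is in the ``multiplicity one'' step, which you correctly flag as the crux but do not actually resolve. Your first suggestion --- forcing $H^0(X,\mathcal{E}\mathrm{nd}_0(\shT_X)) = 0$ --- is too strong: it would make $\shT_X$ stable rather than merely polystable with distinct summands, and you yourself notice this. Your Bochner/holonomy sketch (``parallel endomorphisms \dots\ giving exactly multiplicity one'') is circular as stated: knowing that global sections of $\mathcal{E}\mathrm{nd}(\shT_X)$ are parallel only yields $\dim H^0 = \#\text{factors}$ \emph{after} you know the holonomy representations on the summands are pairwise non-isomorphic, which is precisely the point at issue. And the appeal to ``negativity of the canonical bundle'' is backwards (it is ample); what you need is that $\mu(\shT_X) < 0$.

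The paper's argument, by contrast, is a one-line reduction: citing a lemma of Beauville, if $\shE_i \simeq \shE_j$ for $i \neq j$ in the Hermite--Einstein decomposition of $\shT_X$, then $\shE_i$ would carry a \emph{flat} connection. (Reason: the holomorphic isomorphism is parallel, so the holonomy representations on $\shE_i$ and $\shE_j$ coincide; but in the de Rham decomposition the full holonomy group $\prod_k H_k$ acts on $\shE_i$ through the $i$-th factor alone, so these representations can agree only when $H_i$ is trivial.) A flat bundle has degree zero, hence slope zero, contradicting $\mu(\shE_i) = \mu(\shT_X) < 0$. This is the missing idea in your sketch: the \emph{strict negativity of the slope} is what kills repeated factors, by excluding flat summands. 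Once the $\shE_i$ are pairwise non-isomorphic, uniqueness of the decomposition follows from the elementary Lemma~\ref{lem:simple} (an idempotent in $\prod_i \C$ argument), which you do not mention but which is routine.
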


We shall briefly recall the definition of stability and polystability. For a
torsion-free coherent sheaf $\mathcal{F}$ on $X$, we define the \emph{slope}
$\mu(\mathcal{F})$ with respect to the ample line bundle $\omega_X$ by the formula
\begin{equation} \label{eq:slope}
	\mu(\mathcal{F}) = 
		\frac{c_1(\mathcal{F}) \cdot c_1(\omega_X)^{\dim X-1}}{\rk \mathcal{F}},
\end{equation}
see for example \cite[Definition 1.2.11]{HD_LM_TG}. If $i \colon U \to X$ is the
inclusion of the open subset where $\mathcal{F}$ is locally free, then $\rk
\mathcal{F}$ means the rank of $i^* \mathcal{F}$, and $c_1(\mathcal{F})$ is defined
as the first Chern class of the line bundle $\det \mathcal{F} = i_* \det(i^*
\mathcal{F})$.

\begin{definition}
\label{defn:semi_poly_stable}
Let $\mathcal{F}$ be a torsion-free sheaf on a canonically polarized complex manifold
$X$.
\begin{enumerate}
\item \label{itm:semi_poly_stable:stable} $\mathcal{F}$ is \emph{stable} if for every
subsheaf $\mathcal{G} \subseteq \mathcal{F}$ with $0<\rk \mathcal{G}< \rk
\mathcal{F}$, one has $\mu(\mathcal{G}) < \mu(\mathcal{F})$,
\item \label{itm:semi_poly_stable:poly_stable} $\mathcal{F}$ is \emph{polystable} if
it is the direct sum of stable sheaves of the same slope.
\end{enumerate}
\end{definition}

The following simple lemma will be used in two places below.

\begin{lemma}  \label{lem:simple}
Let $\shE= \shE_1 \oplus \dotsb \oplus \shE_n$ be a polystable vector bundle, with
$\shE_i$ stable and pairwise non-isomorphic. If $\shE = \shF \oplus \shG$ for two
subsheaves $\shF, \shG \subseteq \shE$, then there is a subset $I \subseteq \{1,
\dotsc, n\}$ with the property that
\[
	\shF = \bigoplus_{i \in I} \shE_i \quad \text{and} \quad
		\shG = \bigoplus_{i \not\in I} \shE_i.
\]
\end{lemma}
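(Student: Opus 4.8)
The plan is to use the standard fact that $\shE_i$ stable, together with pairwise non-isomorphism, forces $\Hom(\shE_i,\shE_j)=0$ for $i\neq j$ and $\Hom(\shE_i,\shE_i)=\C$ (since stable bundles are simple), and then to analyze the idempotent projector $p\colon\shE\to\shE$ onto $\shF$. First I would write $p$ as a matrix $(p_{ij})$ with $p_{ij}\in\Hom(\shE_j,\shE_i)$ relative to the decomposition $\shE=\bigoplus_i\shE_i$. By the Hom-vanishing just mentioned, $p_{ij}=0$ for $i\neq j$, so $p$ is diagonal; each diagonal entry $p_{ii}\in\End(\shE_i)=\C$ is an idempotent scalar, hence $0$ or $1$. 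Setting $I=\{i : p_{ii}=1\}$ then gives $\shF=\im(p)=\bigoplus_{i\in I}\shE_i$ and $\shG=\ker(p)=\bigoplus_{i\notin I}\shE_i$, which is exactly the claim.

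The one point requiring a little care is that $\shF$ and $\shG$ are a priori only subsheaves, so I should first argue that the projector $p$ is an actual morphism of vector bundles (equivalently a morphism of coherent sheaves) $\shE\to\shE$. This is immediate: the direct sum decomposition $\shE=\shF\oplus\shG$ means precisely that the inclusions $\shF\hookrightarrow\shE$ and $\shG\hookrightarrow\shE$ exhibit $\shE$ as a biproduct, so the projection onto the first factor is a genuine $\mathcal{O}_X$-linear endomorphism of $\shE$ with image $\shF$ and kernel $\shG$. Then the matrix argument above applies verbatim. (One could alternatively phrase the whole thing via the slope computation $\mu(\shF)=\mu(\shG)=\mu(\shE)$ forced by polystability, but the endomorphism-algebra argument is cleaner and avoids any stability bookkeeping.)

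I expect no serious obstacle here; the only thing to double-check is the simplicity statement $\End(\shE_i)=\C$, which holds because $\shE_i$ is stable over a projective variety (a nonzero endomorphism of a stable sheaf is an isomorphism by comparing slopes of kernel and image, and $\C$ algebraically closed then forces it to be a scalar), and the vanishing $\Hom(\shE_i,\shE_j)=0$ for $i\neq j$, which holds because a nonzero such map between stable bundles of the same slope would have to be an isomorphism, contradicting $\shE_i\not\cong\shE_j$. Both are entirely standard, so the proof is short.

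\begin{proof}
Since $\shE = \shF \oplus \shG$, the projection $p \colon \shE \to \shE$ onto the
first summand is an $\mathcal{O}_X$-linear endomorphism of $\shE$ with $p^2 = p$,
$\im(p) = \shF$, and $\ker(p) = \shG$. Write $p = (p_{ij})$ with $p_{ij} \in
\Hom(\shE_j, \shE_i)$ relative to the decomposition $\shE = \shE_1 \oplus \dotsb
\oplus \shE_n$. Each $\shE_i$ is stable, hence simple, so $\End(\shE_i) = \C$; and
for $i \neq j$ a nonzero map $\shE_j \to \shE_i$ between stable bundles of the same
slope would be an isomorphism, contradicting $\shE_i \not\cong \shE_j$, so
$\Hom(\shE_j, \shE_i) = 0$. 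Therefore $p$ is diagonal, $p = \operatorname{diag}(p_{11},
\dotsc, p_{nn})$ with each $p_{ii} \in \C$ an idempotent, hence $p_{ii} \in \{0,1\}$.
Setting $I = \{ i : p_{ii} = 1 \}$, we get
\[
	\shF = \im(p) = \bigoplus_{i \in I} \shE_i \quad \text{and} \quad
		\shG = \ker(p) = \bigoplus_{i \not\in I} \shE_i,
\]
as claimed.
\end{proof}
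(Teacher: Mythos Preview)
Your proof is correct and follows essentially the same approach as the paper: both arguments compute $\Hom(\shE_i,\shE_j)$ using stability, represent the projector onto $\shF$ as a diagonal matrix with idempotent scalar entries, and read off the subset $I$. The only cosmetic difference is that the paper works with both projectors $i_{\shF}p_{\shF}$ and $i_{\shG}p_{\shG}$ and uses $i_{\shF}p_{\shF}+i_{\shG}p_{\shG}=\id_{\shE}$, whereas you use $\shF=\im(p)$ and $\shG=\ker(p)$ directly.
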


\begin{proof}
Since $\shE_i$ are stable and pairwise non-isomorphic, \cite[Proposition
1.2.7]{HD_LM_TG} shows that we have
\begin{equation} \label{eq:EiEj}
	\Hom(\shE_i, \shE_j) = \begin{cases}
		\mathbb{C} &\text{for $i = j$,} \\
		0   &\text{for $i \neq j$.}
	\end{cases}
\end{equation}
Now consider the composition $i_{\shF} p_{\shF} \colon \shE \to \shE$ of the
projection $p_{\shF} \colon \shE \to \shF$ and the inclusion $i_{\shF} \colon \shF
\to \shE$. It is naturally represented by an $n \times n$-matrix; by \eqref{eq:EiEj} this
matrix is diagonal with entries in $\mathbb{C}$. Moreover, all diagonal entries are
either $0$ or $1$, on account of the identity $(i_{\shF} p_{\shF})(i_{\shF} p_{\shF})
= i_{\shF} p_{\shF}$. The same is true for the matrix representing $i_{\shG}
p_{\shG}$; since we have $i_{\shF} p_{\shF} + i_{\shG} p_{\shG} = \id_{\shE}$, the
assertion follows.
\end{proof}

\subsection{Differential geometry}

We shall now use two results from differential geometry to prove 
Theorem \ref{thm:canonically_polarized_tangent}. 

Let $(X, \omega)$ be a compact K\"ahler manifold; by a slight abuse of notation, we
shall use the symbol $\omega$ both for the K\"ahler metric and for its associated
real closed $(1,1)$-form. We can use the same formula as in \eqref{eq:slope} to
define the slope of torsion-free coherent sheaves on $X$, replacing $c_1(\omega_X)$
by the cohomology class $\lbrack \omega \rbrack \in H^2(X, \mathbb{R})$ of the K\"ahler
form. We therefore have the notion of stability and polystability with respect to
$\omega$. 

Now recall that the K\"ahler metric $\omega$ is called \emph{K\"ahler-Einstein} if
$\Ric \omega= \lambda \omega$ for some real number $\lambda$. Here $\Ric \omega$ is
the Ricci curvature form of $\omega$, or equivalently the Chern curvature $\sqrt{-1}
\Theta(\det \shT_X, \det \omega)$ of the naturally induced metric on the
holomorphic line bundle $\det \shT_X$; the constant $\lambda$ is called the
\emph{scalar Ricci curvature} of $\omega$. 

\begin{theorem}[\cite{AT_EDT, YST_OTR}]
If $X$ is a canonically polarized complex manifold, and $\lambda<0$ a real number,
then $X$ admits a unique K\"ahler-Einstein metric with scalar Ricci curvature
$\lambda$.  
\end{theorem}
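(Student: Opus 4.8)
The plan is to run the continuity method of Aubin and Yau, reducing the Einstein equation $\Ric\omega = \lambda\omega$ to a scalar complex Monge--Amp\`ere equation whose favorable sign (coming from $\lambda < 0$) makes both the zeroth-order estimate and the uniqueness statement essentially formal.

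First I would normalize. The Ricci form does not change under scaling the metric, so it suffices to treat $\lambda = -1$. Taking cohomology classes in $\Ric\omega = -\omega$ forces $[\omega] = -2\pi c_1(X) = 2\pi c_1(K_X)$, which is a K\"ahler class precisely because $K_X$ is ample. Fix a reference K\"ahler metric $\omega_0$ in this class. By the $\partial\bar\partial$-lemma there is $f \in C^\infty(X)$ with $\Ric\omega_0 = -\omega_0 + \sqrt{-1}\,\partial\bar\partial f$, and writing the sought metric as $\omega = \omega_0 + \sqrt{-1}\,\partial\bar\partial\varphi$, the equation $\Ric\omega = -\omega$ becomes, after absorbing a constant into $f$, the Monge--Amp\`ere equation
\begin{equation*}
 (\omega_0 + \sqrt{-1}\,\partial\bar\partial\varphi)^n = e^{\varphi + f}\,\omega_0^n , \qquad \omega_0 + \sqrt{-1}\,\partial\bar\partial\varphi > 0 .
\end{equation*}

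Next I would solve this along the family obtained by replacing $f$ with $tf$, $t \in [0,1]$. At $t = 0$ the function $\varphi \equiv 0$ is a solution, so the set $S \subseteq [0,1]$ of parameters admitting a smooth admissible solution is nonempty. Openness of $S$ follows from the implicit function theorem in H\"older spaces: the linearization of the operator at a solution $\varphi_t$ is $\psi \mapsto \Delta_{\omega_t}\psi - \psi$, and an integration by parts shows this self-adjoint elliptic operator has trivial kernel, hence is an isomorphism $C^{k+2,\alpha} \to C^{k,\alpha}$. For closedness one needs a priori estimates uniform in $t$, and here the sign of $\lambda$ pays off. The $C^0$ estimate is immediate from the maximum principle: at a maximum point of $\varphi_t$ one has $\sqrt{-1}\,\partial\bar\partial\varphi_t \le 0$, so $e^{\varphi_t + tf} \le 1$ there, giving $\sup\varphi_t \le \sup|f|$, and symmetrically $\inf\varphi_t \ge -\sup|f|$. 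The second-order estimate is the Aubin--Yau Laplacian estimate: applying the maximum principle to a quantity of the form $e^{-C\varphi_t}(n + \Delta_{\omega_0}\varphi_t)$ and using the $C^0$ bound together with a lower bound for the holomorphic bisectional curvature of $\omega_0$ yields $c\,\omega_0 \le \omega_t \le C\,\omega_0$ with constants independent of $t$. Once the equation is uniformly elliptic with bounded coefficients, the Evans--Krylov theorem gives a uniform $C^{2,\alpha}$ bound, and Schauder bootstrapping gives uniform $C^{k,\alpha}$ bounds for all $k$; hence $S$ is closed, so $S = [0,1]$ and $t = 1$ produces the K\"ahler--Einstein metric.

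Finally, uniqueness is again a one-line maximum principle argument. If $\omega_1, \omega_2$ are both K\"ahler--Einstein with $\Ric\omega_i = -\omega_i$, they both lie in the class $2\pi c_1(K_X)$, so $\omega_2 = \omega_1 + \sqrt{-1}\,\partial\bar\partial\psi$ for some $\psi$; subtracting the two Einstein equations and using the $\partial\bar\partial$-lemma gives $(\omega_1 + \sqrt{-1}\,\partial\bar\partial\psi)^n = e^{\psi + c}\,\omega_1^n$ for a constant $c$, and evaluating at the extrema of $\psi$ forces $\psi$ to be constant, hence $\omega_1 = \omega_2$. The genuinely hard input in this scheme is the second-order a priori (Laplacian) estimate, together with the Evans--Krylov regularity theorem lurking behind the higher-order bounds; everything else is soft, which is exactly why the negatively curved case is so much easier than the Calabi conjecture or the Fano case. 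In the body of the paper we simply invoke this result as \cite{AT_EDT, YST_OTR}.
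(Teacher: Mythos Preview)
Your sketch is a correct outline of the standard Aubin--Yau continuity method proof, and you are right that the paper does not prove this theorem at all: it is stated with the citation \cite{AT_EDT, YST_OTR} and used as a black box in the proof of Theorem~\ref{thm:canonically_polarized_tangent}. There is therefore nothing to compare; you have supplied the argument the paper deliberately omits, and you acknowledge as much in your final sentence.
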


In the following, we shall normalize the K\"ahler-Einstein metric $\omega$ on a
canonically polarized manifold $X$ by taking its scalar Ricci curvature equal to $- 2
\pi$; in other words, we shall assume that $\Ric \omega = - 2 \pi  \omega$. With this
convention, 
\begin{equation} \label{eq:Kahler_Einstein_constant:chern_equals_metric}
c_1(\omega_X, (\det \omega)^{-1}) = \omega , 
\end{equation} where $c_1(\omega_X,
(\det \omega)^{-1})$ is the Chern form of the induced metric on the canonical line
bundle. Indeed, 
\begin{equation*} 
c_1(\omega_X, (\det \omega)^{-1}) = - c_1(\det \shT_X, \det \omega) 
= - \frac{\sqrt{-1}}{2 \pi} \Theta(\det \shT_X, \det \omega)
=  - \frac{1}{2 \pi} \Ric \omega = \omega,  \end{equation*}
where the second equality is by \cite[Example 4.4.8.i]{HD_CG}. This ensures that the
slope with respect to the ample line bundle $\omega_X$ is the same as the slope with
respect to the K\"ahler form $\omega$; in particular, the two notions of stability
(and polystability) coincide.

\begin{proposition}[{e.g.,\cite[Definitions 4.B.1 and 4.B.11]{HD_CG}}]
Let $(X, \omega)$ be a compact K\"ahler-Einstein manifold. Then the
induced metric on the holomorphic tangent bundle $\shT_X$ is
Hermite-Einstein.
\end{proposition}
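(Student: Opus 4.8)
The plan is to unwind the definitions and recall the relevant K\"ahler identities. A K\"ahler metric $\omega$ is K\"ahler-Einstein precisely when $\Ric\omega = \lambda\omega$ for some constant $\lambda$; as observed just above, $\Ric\omega$ is (up to the factor $\sqrt{-1}/2\pi$, signs aside) the Chern curvature $\sqrt{-1}\Theta(\det\shT_X,\det\omega)$ of the induced metric on $\det\shT_X$. On the other hand, a Hermitian holomorphic vector bundle $(\shE, h)$ on $(X,\omega)$ is Hermite-Einstein if its Chern curvature $\Theta(\shE,h)$, contracted with $\omega$, is a constant scalar multiple of the identity endomorphism, i.e. $\sqrt{-1}\Lambda_\omega\Theta(\shE,h) = c\cdot\id_{\shE}$ for a constant $c$, where $\Lambda_\omega$ denotes contraction with $\omega$.

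First I would compute $\Lambda_\omega\Theta(\shT_X, \omega)$ for the induced (Levi-Civita) metric on $\shT_X$ of a K\"ahler-Einstein manifold. The key point is the standard fact that for a K\"ahler metric the Chern curvature of the holomorphic tangent bundle coincides with the curvature of the Levi-Civita connection, and that its contraction $\sqrt{-1}\Lambda_\omega\Theta(\shT_X,\omega)$ is exactly the Ricci endomorphism $\mathrm{Ric}_\omega \in \End(\shT_X)$ obtained by raising an index of the Ricci form. The K\"ahler-Einstein condition $\Ric\omega = \lambda\omega$ says precisely that this endomorphism is $\lambda\cdot\id_{\shT_X}$ — since contracting the $(1,1)$-form $\omega$ with itself gives $\dim X$ times the identity, the proportionality of forms $\Ric\omega = \lambda\omega$ translates to proportionality of endomorphisms. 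Hence $\sqrt{-1}\Lambda_\omega\Theta(\shT_X,\omega) = \lambda\cdot\id_{\shT_X}$, which is exactly the Hermite-Einstein condition with Einstein constant $\lambda$.

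I would then simply cite the standard reference (as the statement already does, \cite[Definitions 4.B.1 and 4.B.11]{HD_CG}) for the precise form of these definitions and identities, rather than redoing the local curvature computation: the proof is really an unwinding of two differential-geometric definitions via the K\"ahler identity relating the tangent-bundle curvature to the Ricci form. The main (and only) subtlety is bookkeeping: matching the normalization conventions for the Einstein constant on the form side ($\lambda$ in $\Ric\omega=\lambda\omega$) with the constant on the endomorphism side, and making sure the factors of $2\pi$ and $\sqrt{-1}$ are consistent with \eqref{eq:Kahler_Einstein_constant:chern_equals_metric}. With the normalization $\Ric\omega = -2\pi\omega$ fixed earlier, the induced metric on $\shT_X$ is Hermite-Einstein with Einstein constant $-2\pi$, and this is all that will be needed in the next section when combining with the Uhlenbeck-Yau theorem to deduce polystability of $\shT_X$.
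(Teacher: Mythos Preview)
Your argument is correct and is exactly the standard one: the K\"ahler-Einstein condition $\Ric\omega=\lambda\omega$ is equivalent, via the K\"ahler identity identifying $\sqrt{-1}\Lambda_\omega\Theta(\shT_X,\omega)$ with the Ricci endomorphism, to the Hermite-Einstein condition $\sqrt{-1}\Lambda_\omega\Theta(\shT_X,\omega)=\lambda\cdot\id_{\shT_X}$. Note, however, that the paper does not supply its own proof of this proposition at all; it simply records the statement with a citation to \cite[Definitions 4.B.1 and 4.B.11]{HD_CG} and moves on, so there is nothing to compare your proof against beyond the reference you already invoke.
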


Recall that a Hermitian metric $h$ on a holomorphic vector bundle $\mathcal{E}$ is
is called \emph{Hermite-Einstein} if 
\begin{equation}
\sqrt{-1} \Lambda_{\omega} \Theta(\mathcal{E},h) =  \lambda \id_{\mathcal{E}};
\end{equation}
here $\Lambda_{\omega}$ is the metric contraction on the space of complex-valued
two-forms, induced by the K\"ahler metric $\omega$. The Uhlenbeck-Yau theorem relates
this differential-geometric condition back to algebraic geometry.

\begin{theorem}[\cite{UK_YST_OTE}]
\label{theorem:metric_polystable}
On a compact K\"ahler manifold $(X,\omega)$, a holomorphic vector bundle
admits a Hermite-Einstein metric  if and only if it is polystable with respect to $\omega$.
\end{theorem}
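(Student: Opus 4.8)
Since Theorem \ref{theorem:metric_polystable} is the Donaldson--Uhlenbeck--Yau correspondence, only the easy half is genuinely short, so the plan is to treat the two implications separately. For ``Hermite--Einstein $\Rightarrow$ polystable'' (which goes back to Kobayashi and L\"ubke) I would argue as follows. Let $h$ be a Hermite--Einstein metric on $\mathcal{E}$, so $\sqrt{-1}\,\Lambda_\omega \Theta(\mathcal{E},h) = \lambda\,\id_{\mathcal{E}}$. Given a coherent subsheaf $0 \neq \mathcal{F} \subsetneq \mathcal{E}$, restrict everything to the open set $U \subseteq X$ where $\mathcal{F}$, $\mathcal{E}/\mathcal{F}$ and their determinants are locally free; since $X \setminus U$ has codimension $\geq 2$ this changes none of the degrees. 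On $U$ the inclusion $\mathcal{F} \hookrightarrow \mathcal{E}$ has a second fundamental form $\beta$, and the standard formula expressing the curvature of the induced metric on $\mathcal{F}$ through $\Theta(\mathcal{E},h)$ and $\beta$, combined with the Chern--Weil expression for $\deg \mathcal{F} = \int_X c_1(\mathcal{F}) \wedge [\omega]^{\dim X - 1}$ and the Hermite--Einstein identity, yields
\[ \mu(\mathcal{F}) = \mu(\mathcal{E}) - \frac{c}{\rk \mathcal{F}} \int_U |\beta|^2 \, \frac{\omega^{\dim X}}{(\dim X)!} \leq \mu(\mathcal{E}) \]
for a positive constant $c$. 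Hence $\mathcal{E}$ is semistable, and equality forces $\beta \equiv 0$, i.e.\ $\mathcal{F}$ is a holomorphic $h$-orthogonal direct summand on $U$ and therefore, by reflexivity, on all of $X$. Iterating this (exactly as in Lemma \ref{lem:simple}) splits $\mathcal{E}$ into a direct sum of stable bundles of slope $\mu(\mathcal{E})$, i.e.\ polystability.

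For ``polystable $\Rightarrow$ Hermite--Einstein'' the plan is to first reduce to the stable case --- a direct sum of Hermite--Einstein metrics is again Hermite--Einstein, so it suffices to treat a stable $\mathcal{E}$ --- and then run the continuity method of Uhlenbeck--Yau (or, in the projective case, Donaldson's heat flow). Concretely: fix a smooth background metric $h_0$ and, for each $\varepsilon > 0$, solve the perturbed equation
\[ \sqrt{-1}\,\Lambda_\omega \Theta(\mathcal{E},h_\varepsilon) = \lambda\,\id_{\mathcal{E}} - \varepsilon \log\!\bigl(h_0^{-1} h_\varepsilon\bigr), \]
which is solvable for every $\varepsilon$ because the zeroth-order term makes the relevant elliptic operator coercive; equivalently, evolve $h_0$ by the Donaldson heat flow and use long-time existence. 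One then wants to let $\varepsilon \to 0$ and extract a limit solving the genuine Hermite--Einstein equation. All higher-order estimates bootstrap from a uniform $C^0$ bound on $\log(h_0^{-1} h_\varepsilon)$ via the elliptic equation and Uhlenbeck's small-curvature regularity, so the whole argument reduces to a uniform $C^0$ estimate.

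The step I expect to be the main obstacle is precisely that $C^0$ estimate, and the way one copes with its possible failure is a contradiction argument: normalize $s_\varepsilon := \log(h_0^{-1} h_\varepsilon)$ by its supremum norm and pass to a weak $L^2_1$ limit, obtaining a nonzero self-adjoint endomorphism-valued section whose spectral projections satisfy $(1-\pi)\,\bar\partial \pi = 0$ in $L^2$, i.e.\ are \emph{weakly holomorphic subbundles}. The technical heart of Uhlenbeck--Yau's argument is then the regularity theorem asserting that such an $L^2_1$ weakly holomorphic projection is smooth --- hence the orthogonal projection onto a genuine holomorphic subbundle --- away from an analytic subset of codimension $\geq 2$, and therefore defines a coherent (reflexive) subsheaf $\mathcal{F} \subsetneq \mathcal{E}$; feeding the integral identities accumulated along the continuity path into a Chern--Weil computation then gives $\mu(\mathcal{F}) \geq \mu(\mathcal{E})$, contradicting stability. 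This regularity statement for weakly holomorphic subbundles is the delicate point; everything else is standard a priori estimates and compactness. I would note finally that for the use made of this theorem here --- deducing Theorem \ref{thm:canonically_polarized_tangent} by applying it to $\shT_X$ with the K\"ahler--Einstein metric --- one needs only the direction Hermite--Einstein $\Rightarrow$ polystable, which is the elementary half proved above, so in practice the deep converse enters only as a cited black box.
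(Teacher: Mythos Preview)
The paper does not prove this theorem at all: it is stated as a cited result of Uhlenbeck--Yau and used as a black box in the proof of Theorem~\ref{thm:canonically_polarized_tangent}. There is therefore nothing to compare against.

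That said, your sketch is an accurate outline of the standard proof in the literature. The easy direction (Hermite--Einstein $\Rightarrow$ polystable, due to Kobayashi and L\"ubke) is essentially as you describe, via the second fundamental form computation. Your summary of the hard direction --- reduction to the stable case, the perturbed equation / heat flow, the reduction of all estimates to a $C^0$ bound, and the contradiction argument producing a destabilizing subsheaf through the Uhlenbeck--Yau regularity theorem for weakly holomorphic subbundles --- correctly identifies both the architecture and the genuine technical heart of the argument.

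Your closing observation is exactly right and worth emphasizing: for the application in this paper (polystability of $\shT_X$ on a canonically polarized manifold), one starts from a K\"ahler--Einstein metric, so the induced metric on $\shT_X$ is already Hermite--Einstein, and only the elementary implication is used. The deep converse is not needed anywhere in the paper.
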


Here is the proof that the tangent bundle of a canonically polarized manifold is
polystable. 

\begin{proof}[Proof of Theorem \ref{thm:canonically_polarized_tangent}]
Since $X$ is canonically polarized, it admits a unique K\"ahler-Einstein metric
$\omega$ with $\Ric \omega = - 2 \pi \omega$. The induced metric on the tangent
bundle is Hermite-Einstein, and by the Uhlenbeck-Yau theorem, $\shT_X$ is
polystable with respect to $\omega$, hence also polystable with respect to
$\omega_X$. This means that we have a decomposition
\[
	\shT_X = \shE_1 \oplus \dotsb \oplus \shE_n
\]
into stable subbundles $\shE_i$ of slope
\[
	\mu(\shE_i) = \mu(\shT_X) = -\frac{c_1(\omega_X)^{\dim X}}{\dim X} < 0.
\]
The argument in \cite[Lemma~1.3]{BA_CM} now shows that the $\shE_i$ must be pairwise
non-isomorphic: indeed if $\shE_i \simeq \shE_j$ for $i \neq j$, then $\shE_i$ would
carry a flat connection, which is not possible because $\mu(\shE_i) < 0$. Finally,
the uniqueness of the decomposition follows from Lemma~\ref{lem:simple}.
\end{proof}

\subsection{Proof of the theorem}
\label{sec:proof}

We now come to the proof of Theorem \ref{theorem:main}. It is easy to see (by
induction on the dimension) that every canonically polarized
manifold has at least one decomposition 
\[	
	X \cong X_1 \times \dotsm \times X_r
\]
into irreducible canonically polarized manifolds $X_i$. It remains to show
that this decomposition is unique, up to the order of the factors. For this, it is
clearly enough to prove that any two product decompositions of a canonically
polarized manifold admit a common refinement. This, in turn, is implied by the
following special case.

\begin{lemma}
Let $X \cong Y \times Z \cong Y' \times Z'$ be two product decompositions of a canonically
polarized manifold. Then there is a common refinement $X \cong W_1 \times W_2 \times
W_3 \times W_4$, with the property that
\begin{equation} \label{eq:refinement}
	Y \cong W_1 \times W_2, \quad
	Z \cong W_3 \times W_4, \quad
	Y' \cong W_1 \times W_3, \quad
	Z' \cong W_2 \times W_4.
\end{equation}
\end{lemma}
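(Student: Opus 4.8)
The plan is to translate the product decompositions into direct sum decompositions of the tangent bundle and use the polystability of $\shT_X$ (Theorem \ref{thm:canonically_polarized_tangent}) to find the common refinement, then integrate it back to a geometric decomposition. First I would observe that a product decomposition $X \cong Y \times Z$ induces a direct sum decomposition $\shT_X \cong p_Y^* \shT_Y \oplus p_Z^* \shT_Z$ of the tangent bundle into subbundles that are themselves pullbacks, hence polystable. Writing $\shT_X = \shE_1 \oplus \dotsb \oplus \shE_n$ for the canonical decomposition into stable, pairwise non-isomorphic subbundles furnished by Theorem \ref{thm:canonically_polarized_tangent}, Lemma \ref{lem:simple} applies to each of the two given decompositions: there are subsets $A, A' \subseteq \{1, \dotsc, n\}$ with $p_Y^* \shT_Y = \bigoplus_{i \in A} \shE_i$, $p_Z^* \shT_Z = \bigoplus_{i \notin A} \shE_i$, and similarly with $A'$ for $Y'$, $Z'$. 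The four subsets $A \cap A'$, $A \setminus A'$, $A' \setminus A$, $\{1,\dotsc,n\} \setminus (A \cup A')$ partition $\{1, \dotsc, n\}$ and give four foliations $\shF_1, \shF_2, \shF_3, \shF_4 \subseteq \shT_X$ whose pairwise-sums recover $p_Y^*\shT_Y$, $p_Z^*\shT_Z$, $p_{Y'}^*\shT_{Y'}$, $p_{Z'}^*\shT_{Z'}$.

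Next I would invoke the theorem of Beauville referenced in the comments on proofs: a holomorphic direct sum decomposition $\shT_X = \bigoplus_k \shF_k$ into subbundles that are each integrable (closed under Lie bracket) induces a decomposition of the universal cover $\widetilde X \cong \prod_k \widetilde W_k$ compatible with the $\shF_k$, and since $X$ is canonically polarized (hence of general type, with no infinitesimal automorphisms and finite fundamental group acting suitably), this descends to an honest product decomposition $X \cong W_1 \times W_2 \times W_3 \times W_4$ with $T_{W_k}$ pulling back to $\shF_k$. Here the subbundles $\shF_k$ are integrable because each is a sum of some of the $\shE_i$ and each $p_Y^*\shT_Y$, $p_Z^*\shT_Z$ etc.\ is obviously integrable (being a pullback tangent bundle from a factor), so $\shF_1 = p_Y^*\shT_Y \cap p_{Y'}^*\shT_{Y'}$ and its siblings are intersections/sums of integrable subbundles appearing among the $\shE_i$, hence integrable; one should check this carefully using that the $\shE_i$ are the canonical pieces so that any integrable subbundle built from direct-sum decompositions is again a sub-sum. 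Finally, once $X \cong W_1 \times W_2 \times W_3 \times W_4$ is established with $T_{W_i}$ accounting for the correct index set, the identifications in \eqref{eq:refinement} follow: for instance $Y$ and $W_1 \times W_2$ have the same tangent distribution inside $\shT_X$ and both are leaves through a given point, so by uniqueness of integral submanifolds (or by comparing the two induced product structures on $\widetilde X$) one gets $Y \cong W_1 \times W_2$, and similarly for the other three.

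\textbf{Main obstacle.} The hard part will be making the passage from the direct sum decomposition of $\shT_X$ back to an actual product decomposition of $X$ fully rigorous: Beauville's theorem (or the classical de Rham-type splitting via the K\"ahler-Einstein metric) gives the splitting on the universal cover, and one must check that the deck transformation group respects the product structure so that it descends to $X$ itself. For canonically polarized manifolds this should be automatic because the K\"ahler-Einstein metric is unique, hence preserved by all automorphisms and deck transformations, and the splitting of the tangent bundle into the $\shE_i$ is parallel for this metric; the subtlety is just bookkeeping to ensure the four factors $W_k$ are well-defined manifolds (not merely germs of foliations) and that $\pi_1$ permutes at worst the isomorphic factors — but here even that cannot happen at the level of the refinement since each $\shF_k$ is a genuine sub-sum of the canonical decomposition and is thus individually preserved. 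A secondary point requiring care is verifying integrability of $\shF_1, \dotsc, \shF_4$: one needs that the intersection of two integrable subbundles that each split off as direct summands of a polystable bundle is again a direct summand (so that it is a subbundle, not merely a subsheaf) and is integrable — this uses Lemma \ref{lem:simple} applied to both $A$ and $A'$ to see that all four pieces are sub-sums of the $\shE_i$, together with the observation that being a pullback tangent bundle is detected on these pieces.
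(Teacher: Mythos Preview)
Your overall architecture matches the paper's: decompose $\shT_X$ canonically via Theorem~\ref{thm:canonically_polarized_tangent}, apply Lemma~\ref{lem:simple} twice to get the four index sets, lift to the universal cover via Beauville's theorem, and descend. But the descent step contains a genuine gap. You assert that descent is ``automatic'' because each $\shF_k$ is preserved by the deck group, and you even claim $X$ has finite fundamental group. The latter is false (genus $g\geq 2$ curves are canonically polarized with infinite $\pi_1$), and the former is not enough: Beauville's theorem gives $\widetilde X \cong M_1\times M_2\times M_3\times M_4$ with $G=\pi_1(X)$ acting \emph{diagonally}, i.e.\ through an embedding $G\hookrightarrow \Aut(M_1)\times\dotsb\times\Aut(M_4)$. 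A diagonal action does not make the quotient a product unless $G$ itself splits as a product $G_1\times G_2\times G_3\times G_4$ with $G_k\subset\Aut(M_k)$; irreducible lattices in $\mathrm{PSL}_2(\mathbb R)^2$ acting on $\mathbb H\times\mathbb H$ show this can genuinely fail.

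The missing argument, which the paper supplies, is this: the given decompositions $X\cong Y\times Z\cong Y'\times Z'$ already split $\pi_1(X)$ as $\pi_1(Y)\times\pi_1(Z)$ and as $\pi_1(Y')\times\pi_1(Z')$, compatibly with the embedding into $\prod_k\Aut(M_k)$ (since $M_1\times M_2$ is the universal cover of $Y$, etc.). Writing $G_{k\ell}$ for the preimage of $\Aut(M_k)\times\Aut(M_\ell)$, one gets $G\cong G_{12}\times G_{34}\cong G_{13}\times G_{24}$, and an elementary group-theoretic step then yields $G\cong G_1\times G_2\times G_3\times G_4$. Only then can one set $W_k=M_k/G_k$ and conclude \eqref{eq:refinement}. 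Your proposal never uses the hypothesis that $X$ already has two product structures at the level of $\pi_1$, and without that input the descent cannot be completed. (A minor point: your worry about integrability of the $\shF_k$ is unnecessary---Beauville's theorem takes an arbitrary holomorphic direct-sum splitting of $\shT_X$ as input and produces the integrability as part of the conclusion.)
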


\begin{proof}
By Theorem \ref{thm:canonically_polarized_tangent}, the tangent bundle of $X$ is
polystable, and in fact, decomposes uniquely as 
\begin{equation} \label{eq:splitting_TX}
	\shT_X = \shE_1 \oplus \dotsb \oplus \shE_n
\end{equation}
with $\shE_i$ stable and pairwise non-isomorphic. To simplify the notation, we put
\[
	\shE(I) = \bigoplus_{i \in I} \shE_i
\]
for any subset $I \subseteq \{1, \dotsc, n\}$. The decompositions $X \cong Y \times Z
\cong Y' \times Z'$ of the manifold $X$ induce decompositions $\shT_X = p_Y^* \shT_Y
\oplus p_Z^* \shT_Z = p_{Y'}^* \shT_{Y'} \oplus p_{Z'}^* \shT_{Z'}$ of its tangent
bundle. It then follows from Lemma~\ref{lem:simple} that the set $\{1, \dotsc, n\}$
can be partitioned into four disjoint subsets $I_1$, $I_2$, $I_3$, and $I_4$, in such
a way that
\[
	p_Y^* \shT_Y = \shE(I_1 \cup I_2), \quad 
	p_Z^* \shT_Z = \shE(I_3 \cup I_4), \quad 
	p_{Y'}^* \shT_{Y'} = \shE(I_1 \cup I_3), \quad 
	p_{Z'}^* \shT_{Z'} = \shE(I_2 \cup I_4).
\]

Let $\pi \colon \Xt \to X$ be the universal covering space of $X$; note that
$\Xt$ will usually be non-compact. The splitting $\shT_X = \shE(I_1) \oplus \shE(I_2)
\oplus \shE(I_3) \oplus \shE(I_4)$ lifts to a splitting of $\mathcal{T}_{\Xt}$, and
therefore induces a decomposition
\[
	\Xt \cong M_1 \times M_2 \times M_3 \times M_4
\]
into integral submanifolds of the foliations $\pi^* \shE(I_k)$, according to
\cite[Theorem~A]{BA_CM}. By the same result, the fundamental group $G = \pi_1(X)$ acts
compatibly on each factor $M_k$, in such a way that the natural action on $\Xt$ is
diagonal. In particular, this means that we have an embedding of groups
\[
	G \to \Aut(M_1) \times \Aut(M_2) \times \Aut(M_3) \times \Aut(M_4),
\]
where $\Aut(M_k)$ denotes the group of biholomorphic automorphisms of the complex
manifold $M_k$. Let us denote the preimage of $\Aut(M_k)$ under this embedding by the
letter $G_k$, the preimage of $\Aut(M_k) \times \Aut(M_{\ell})$ by the letter
$G_{k\ell}$, and so on. We claim that $G \cong G_1 \times G_2 \times G_3 \times G_4$. 

To prove this claim, we observe that $M_1 \times M_2$ is a simply connected
integral submanifold of the foliation $\pi^* p_Y^* \shT_Y$, and must therefore be
the universal covering space of $Y$; consequently, $\pi_1(Y)$ embeds into $\Aut(M_1)
\times \Aut(M_2)$. The same is of course true in the other three cases. Since we
have $\pi_1(X) \cong \pi_1(Y) \times \pi_1(Z) \cong \pi_1(Y') \times \pi_1(Z')$
compatibly with the above decompositions, it follows that
\[
	G \cong G_{12} \times G_{34} \cong G_{13} \times G_{24}.
\]
From this, it is easy to deduce that $G \cong G_1 \times G_2 \times G_3 \times G_4$.

To conclude the proof, we define $W_k = M_k / G_k$. We then have $X \cong W_1 \times
W_2 \times W_3 \times W_4$, and so each $W_k$ must be a compact complex manifold;
because $X$ is canonically polarized, each $W_k$ is also canonically polarized, and
therefore a smooth complex projective variety by Chow's theorem. It is clear from the
construction that \eqref{eq:refinement} is satisfied, and so the lemma is proved.
\end{proof}

\newpage
\appendix
\section{Review of some derived algebraic geometry}
\label{sec:dag}

In this appendix, we summarize the deformation theory relevant to us, using the language of derived algebraic geometry. Our primary goal is to explain certain functorialities in the usual deformation-obstruction theory of varieties by interpreting everything in terms of {\em derivations} in the derived category\footnote{We hasten to remark that all statements written here are well-known to the experts, and have been written down simply to provide a convenient reference.}. We do so by first discussing the deformation-obstruction theory for derivations (see \S \ref{derobs} and \S \ref{compat}), then explaining how to realize the theory of square-zero extensions as a special case of the theory of derivations (see \S \ref{sqzeroext}), and then finally recording the corresponding statements for the deformation-obstruction theory for square-zero extensions (see \S \ref{obssqzero}). The format adopted is that of short numbered paragraphs, each one discussing an algebraic problem and its solution first, and then stating the corresponding scheme-theoretic result (with references). To avoid mentioning derived Deligne-Mumford stacks in the {\em statements} of various theorems below, we impose flatness hypotheses in the statements. We hope that this sacrifice of generality will make the statements more readily accessible. Our primary references will be \cite{LurieDAG} and \cite{IllusieCC1}, though occasionally we refer to \cite[Chapter 8]{LurieHA} as well; we freely use the language of \cite{LurieHT} and \cite[Chapter 1]{LurieHA}.

\subsection{Conventions} We use the term $\infty$-groupoid when referring to a mapping space in an $\infty$-category. Given an $\infty$-category $\mathcal{C}$ and objects $X,Y \in \mathcal{C}$, we let $\Hom_{\mathcal{C}}(X,Y)$ denote the  $\infty$-groupoid of maps in $\mathcal{C}$ between $X$ and $Y$; we drop the subscript $\mathcal{C}$ from the notation when the category is clear from context. Fix a Grothendieck abelian category $\mathcal{A}$, and consider the stable $\infty$-category $\mathcal{D}$ of (unbounded) chain complexes over $\mathcal{A}$ with its usual t-structure; see \cite[Section 1.3.5]{LurieHA} for more. Given an object $K \in \mathcal{D}$ and an integer $j$, the complex $K[j]$ denotes the complex $K$ with homological degree increased by $j$. We freely identify $\mathcal{D}_{\leq 0}$ with the $\infty$-category of simplicial objects in $\mathcal{A}$ via the Dold-Kan correspondence, and we use the term {\em connective} to refer to such chain complexes.  We sometimes denote the shift functor $K \mapsto K[-1]$ by $\Omega$. Since $\mathcal{A}$ has enough injectives, for any two (bounded above) complexes $C$ and $D$ in $\mathcal{D}$, the $\infty$-groupoid $\Hom(C,D)$ of maps $C \to D$ in $\mathcal{D}$ can also be realized as
\[ \Hom(C,D) = \tau_{\leq 0} \Hom^\bullet(C,\tilde{D}). \]
where $D \to \tilde{D}$ is a quasi-isomorphism between $D$ and a complex $\tilde{D}$ of $K$-injectives, and $\Hom^\bullet(C,\tilde{D})$ is the mapping chain complex in the usual sense of homological algebra; note that $\Hom(C,D)$ is a simplicial abelian group. If $f:M \to N$ is a morphism in a stable $\infty$-category $\mathcal{C}$, then $N/M$ denotes the pushout of $f$ along $M \to 0$ and is called the {\em homotopy cokernel} of $f$. Dually, the pullback of $f$ along $0 \to N$ is called the {\em homotopy kernel} of $f$. Note that $\Omega N$ is simply the homotopy kernel of $0 \to N$. We denote by 
\[ M \to N \to N/M \]
the exact triangle defined by $f$ in the homotopy category of $\mathcal{C}$; we exclude the boundary map $N/M \to M[1]$ in our depiction of the exact triangle simply for notational convenience. 

\subsection{The basic setup} We will work in the setting of derived algebraic geometry provided by the $\infty$-category $\SAlg_k$ of simplicial commutative $k$-algebras over some fixed (ordinary) base ring $k$ rather than any more sophisticated variants; the fullsubcategory of $\SAlg_k$ spanned by discrete $k$-algebras is ordinary and will be denoted $\Alg_k$. All tensor products are assumed to be derived and relative to $k$ unless otherwise specified; in particular, the subcategory $\Alg_k \subset \SAlg_k$ is not closed under the $\otimes$-products unless $k$ is a field. The reference \cite{LurieHA} works in the setting of $E_\infty$-rings rather than $\SAlg_k$ (the two coincide with $\Q \subset k$, up to connectivity constraints), but can also be adapted to work for $\SAlg_k$; we choose to ignore this issue when referring to \cite{LurieHA} below.

\subsection{Stable $\infty$-categories of modules and their functorialities} Given an $A \in \SAlg_k$, one can define a stable $\infty$-category $\Mod(A)$ of $A$-modules which comes equipped with a natural $\otimes$-product structure. When $A$ is discrete, this $\infty$-category realizes the derived category of the abelian category of $A$-modules as its homotopy category; we stress that $\Mod(A)$ is {\em not} the ordinary category of $A$-modules when $A$ is discrete. The association $A \mapsto \Mod(A)$ obeys the expected functorialities. For example, if $f:A \to B$ is a map in $\SAlg_k$, there is an extension of scalars functor $\Mod(A) \to \Mod(B)$ induced by tensoring with $B$, and there is a restriction functor $\Mod(B) \to \Mod(A)$ obtained by remembering only the $A$-structure. These functors are adjoint: given $M \in \Mod(A)$ and $N \in \Mod(B)$, there exist functorial equivalences
\[ \Hom_{\Mod(A)}(M,N) \simeq \Hom_{\Mod(B)}(M \otimes_A B,N). \] 

\subsection{The cotangent complex} \label{cotcomplex} Let $A$ be an ordinary $k$-algebra.  The cotangent complex $L_A$ of $A$ relative to $k$, sometimes denoted by $L_{A/k}$ when the ring $k$ is not clear from context, is an object in $\Mod(A)$ constructed as follows:  pick an $A_\bullet \in \SAlg_k$ with an equivalence $f:A_\bullet \to A$ such that each $A_n$ is a free $k$-algebra. Then the $A$-modules $\Omega^1_{A_n/k} \otimes_{A_n} A$ assemble naturally to form a simplical $A$-module. The corresponding object in $\Mod(A)$ is called the cotangent complex $L_A$. This construction can be generalized to an arbitrary $A \in \SAlg_k$, and also generalizes to simplicial rings in an arbitrary topos. Note that in each case the cotangent complex is actually {\em connective}; non-connective cotangent complexes arise if one works with Artin stacks, but these do not concern us.  A non-abelian derived functor approach to the cotangent complex can be found in \cite{QuillenCRC}; the book \cite{IllusieCC1} is the original source, and contains the details of the above construction.

\subsection{Derivations}\label{defder} Let $A \in \SAlg_k$, and let $M$ be an $A$-module. A $k$-linear derivation $A \to M$ is, by definition, a $k$-algebra section of the projection map $A \oplus M \to A$, where $A \oplus M$ is given an $A$-algebra structure via the usual $A$-action on $M$, and with $M \subset A \oplus M$ being a square-zero ideal; one can easily check that this recovers the usual notion when $A$ and $M$ are discrete. Let $\Der_k(A,M)$ denote the $\infty$-groupoid of all $k$-linear derivations $A \to M$ (we drop the subscript $k$ from the notation when the base ring $k$ is fixed). By construction of the cotangent complex, one has a derivation $d:A \to L_A$. It is a theorem that this derivation is the universal one:
\begin{theorem}
\label{thm:dag-der}
With notation as above, composition with $d$ induces a functorial equivalence
\begin{equation} \label{defdereq} \Hom(L_A,M) \simeq \Der(A,M). \end{equation}
\end{theorem}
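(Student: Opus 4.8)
The plan is to bootstrap from the classical universal property of Kähler differentials for free algebras to the general statement via a simplicial resolution. Fix $A \in \SAlg_k$ and an $A$-module $M$, and suppose first that $A = P$ is a free $k$-algebra, say $P = k[x_i : i \in I]$, so that $L_P \simeq \Omega^1_{P/k}$ is the free $P$-module on the symbols $\{dx_i\}_{i \in I}$. On the one hand, $\Hom_P(L_P, M) \simeq \Hom_P(\bigoplus_i P \cdot dx_i, M) \simeq \prod_i M$, the equivalence recording the images of the $dx_i$. On the other hand, a $k$-linear derivation $P \to M$ is a $k$-algebra section of $P \oplus M \to P$, and since $P$ is free as a simplicial commutative $k$-algebra on the set $I$, such a section is determined freely by the images of the generators $x_i$, which range over the space underlying $M$; thus $\Der_k(P, M) \simeq \prod_i M$ as well. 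One checks directly that composition with $d : P \to L_P$, which sends $x_i \mapsto dx_i$, intertwines these two identifications, establishing \eqref{defdereq} for $A = P$, functorially in $M$ and in maps of free algebras.

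For general $A$, choose a simplicial resolution $\epsilon : A_\bullet \to A$ with each $A_n$ a free $k$-algebra, exhibiting $A$ as the geometric realization $|A_\bullet|$ (the colimit over $\Delta^{\mathrm{op}}$) in $\SAlg_k$; this is the resolution used to define $L_A$ in \S\ref{cotcomplex}. By that definition, $L_A$ is the object of $\Mod(A)$ associated to the simplicial $A$-module $[n] \mapsto \Omega^1_{A_n/k} \otimes_{A_n} A$, i.e., its geometric realization. Since $\Hom_A(-, M)$ carries colimits to limits and extension of scalars $-\otimes_{A_n} A$ is left adjoint to restriction, we obtain
\[
\Hom_A(L_A, M) \simeq \lim_{[n] \in \Delta} \Hom_A\bigl(\Omega^1_{A_n/k} \otimes_{A_n} A,\, M\bigr) \simeq \lim_{[n] \in \Delta} \Hom_{A_n}\bigl(\Omega^1_{A_n/k},\, M\bigr) \simeq \lim_{[n] \in \Delta} \Der_k(A_n, M),
\]
where the last equivalence is the free case applied to $A_n$, with $M$ regarded as an $A_n$-module via $A_n \to A$.

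It remains to identify $\Der_k(A, M)$ with the same limit. Writing $A \oplus M$ for the trivial square-zero extension, $\Der_k(A, M) \simeq \Map_{\SAlg_k\,/A}(A, A \oplus M)$ by definition. Base change along $A_n \to A$ gives $(A \oplus M) \times_A A_n \simeq A_n \oplus (M|_{A_n})$, so since $A \simeq \colim_{[n] \in \Delta^{\mathrm{op}}} A_n$ in $\SAlg_k$, mapping out of $A$ turns this colimit presentation into a limit:
\[
\Der_k(A, M) \simeq \Map_{\SAlg_k\,/A}(A, A \oplus M) \simeq \lim_{[n] \in \Delta} \Map_{\SAlg_k\,/A_n}\bigl(A_n,\, A_n \oplus (M|_{A_n})\bigr) \simeq \lim_{[n] \in \Delta} \Der_k(A_n, M).
\]
Composition with the universal derivations $d_n : A_n \to L_{A_n} = \Omega^1_{A_n/k}$ is compatible with the cosimplicial structure maps, so the levelwise equivalences of the free case assemble to an equivalence $\Hom_A(L_A, M) \simeq \Der_k(A, M)$ compatible with composition with $d : A \to L_A$. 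This proves the theorem.

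The main obstacle is the last paragraph: one must be careful that the ``space of derivations'' really does see the resolution correctly, i.e., that the functor $B \mapsto \Map_{\SAlg_k\,/A}(B, A \oplus M)$ on $\SAlg_k$-algebras over $A$ sends the presentation $A \simeq |A_\bullet|$ to a limit, and that base-changing the trivial square-zero extension along $A_n \to A$ yields the trivial square-zero extension by the restricted module $M|_{A_n}$. Both facts are formal once set up correctly, but the bookkeeping with the module structures over the varying $A_n$ is where care is needed. Alternatively, one may simply cite the construction of the cotangent complex and its universal property in \cite[II.1.2]{IllusieCC1} or \cite[Chapter 7]{LurieHA}; we have indicated the argument here only for completeness.
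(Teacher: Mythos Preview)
Your argument is correct and follows precisely the route the paper gestures at. The paper does not give a self-contained proof of this theorem: it cites Illusie \cite[II.1.2]{IllusieCC1} and Lurie \cite{LurieDAG}, and remarks that the two constructions of the cotangent complex agree because one can produce a comparison map, check it is an isomorphism on free algebras, and then pass to free resolutions. Your proposal is exactly this argument written out in full: you verify the universal property for free (polynomial) algebras by hand, and then descend to arbitrary $A$ by expressing both $\Hom_A(L_A,M)$ and $\Der_k(A,M)$ as the same cosimplicial limit over a free resolution $A_\bullet \to A$. So there is no genuine difference in strategy; you have simply supplied the details the paper omits.

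One small remark on the bookkeeping you flag at the end: the identification $\Map_{\SAlg_k/A}(A_n, A\oplus M) \simeq \Der_k(A_n, M|_{A_n})$ is indeed formal, since a $k$-algebra map $A_n \to A \oplus M$ over $A$ is exactly a pair $(f_n, D)$ with $f_n:A_n \to A$ the structure map and $D:A_n \to M$ satisfying the Leibniz rule for the $A_n$-module structure on $M$ induced by $f_n$. Your caution there is well placed but the step goes through without trouble.
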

The case when $k$, $A$, and $M$ are discrete can be found in \cite[Corollary II.1.2.4.3]{IllusieCC1}, while the case where $M$ is allowed to be a complex can be found in \cite[Proposition II.1.2.6.7]{IllusieCC1}. In \cite{LurieDAG}, the cotangent complex is {\em defined} using the preceding property (see the discussion preceding \cite[Remark 3.2.8]{LurieDAG}). To see that this construction agrees with the Illusie's construction as explained in \S \ref{cotcomplex}, one observes that there is a map from Lurie's cotangent complex to Illusie's. Moreover, this map is an isomorphism when the algebra is free (use \cite[Lemma 3.2.13]{LurieDAG}), and therefore always an isomorphism by passage to free resolutions. A model-categorical approach to the universal properties of the cotangent complex can be found in \cite{QuillenCRC}.

\subsection{Functoriality of derivations} \label{derfunc} Let $f:A \to B$ be a map in $\SAlg_k$, and let $N$ be a $B$-module. Using formula \eqref{defdereq}, we see
\[ \Der(A,N) \simeq \Hom_A(L_A,N) \simeq \Hom_B(L_A \otimes_A B,N). \]
In other words, the natural derivation $A \to L_A \to L_A \otimes_A B$ is the universal derivation from $A$ into a $B$-module.

\subsection{The transitivity triangle} \label{dertrans} Let $f:A \to B$ be a map in $\SAlg_k$. Composing the natural derivation $B \to L_B$ with $f$ defines a derivation $A \to L_B$ and, by \S \ref{derfunc},  a map $L_A \otimes_A B \to L_B$. One can show that this map induces an identification
\[ L_B / (L_A \otimes_A B) \simeq L_{B/A} \]
in the stable $\infty$-category of $B$-modules. At the level of triangulated categories, this gives rise to an exact triangle (see \cite[Proposition 3.2.12]{LurieDAG}) and \cite[Proposition II.2.1.2]{IllusieCC1}), called the transitivity triangle, of the form
\[ L_A \otimes_A B \to L_B \to L_{B/A}. \]
The associated boundary map $L_{B/A}[-1] \to L_A \otimes_A B$ is called the {\em Kodaira-Spencer class} of $f$, and is denoted by $\kappa(f)$.

\subsection{Base change} \label{derbasechange} Let $A,B \in \SAlg_k$. Then the composite map
\[ L_A \otimes B \to  L_{A \otimes B} \to L_{A \otimes B/B}\]
is an isomorphism. One way to see this is by passage to free resolutions; see \cite[Proposition 3.2.9]{LurieDAG} for the corresponding scheme-theoretic statement. Alternately, one can also prove this drectly using \S \ref{dertrans}. This fact (for $k$, $A$, and $B$ discrete) can be found in \cite[Proposition II.2.2.1]{IllusieCC1}.

\subsection{Extending derivations across morphisms } \label{derobs} Let $f:A \to B$ be a map in $\SAlg_k$, and let $M$ be an $A$-module. Given a derivation $D:A \to M$, a natural question to ask is if the following diagram can be filled
\begin{equation} \label{derfuncdiag} \xymatrix{ A \ar[r]^D \ar[d] & M \ar[d] \\
			  B & M \otimes_A B } \end{equation}
using a $k$-linear derivation $B \to M \otimes_A B$. Formally speaking, we are asking the following: given a $k$-algebra section $s_D:A \to A \oplus M$ of the projection map $A \oplus M \to A$, when can the diagram
\[ \xymatrix{ A \ar[r]^{s_D} \ar[d] & A \oplus M \ar[d] \\
			  B & B \oplus M \otimes_A B }\]
be filled with a $k$-algebra homomorphism $B \to B \oplus M \otimes_A B$ splitting the projection $B \oplus M \otimes_A B \to B$? By Theorem \ref{thm:dag-der} and its functoriality in $A$ and $M$, the preceding question is equivalent to asking if
\[ \xymatrix{ L_A \ar[r] \ar[d] & M \ar[d] \\
			  L_B & M \otimes_A B }\]
can be filled using a $B$-module map $L_B \to M \otimes_A B$; here the horizontal map $L_A \to M$ is the map defined by $D$. We may refine this diagram to obtain
\[ \xymatrix{ L_A \ar[r] \ar[d] & M \ar[d] \\
			  L_A \otimes_A B \ar[r] \ar[d] & M \otimes_A B \ar@{=}[d] \\
			  L_B & M \otimes_A B. }\]
Thus, requiring the existence of a $k$-linear derivation $B \to M \otimes_A B$ filling diagram \eqref{derfuncdiag} is equivalent to requiring that the map $L_A \otimes_A B \to M \otimes_A B$ induced by the original derivation $A \to M$ factors through $L_A \otimes_A B \to L_B$. Moreover, the space of all possible ways of filling the diagram above is tautologically the homotopy-fiber of 
\[ \Hom_B(L_B,M \otimes_A B) \simeq \Der_k(B,M \otimes_A B) \to \Der_k(A,M) \simeq \Hom_B(L_A \otimes_A B, M \otimes_A B) \]
over the point corresponding to $D:A \to M$. Using the rotated transitivity triangle
\[ L_{B/A}[-1] \to L_A \otimes_A B \to L_B \]
we see that such a factorization exists if and only if the induced map $L_{B/A}[-1] \to M \otimes_A B$ is trivial. We denote this last map by $\ob(f,D)$ and refer to it as the obstruction to extending $D$ across $f$. When $\ob(f,D)$ vanishes, the description as a homotopy-fiber above shows that $\infty$-groupoid of all possible ways of filling in diagram \eqref{derfuncdiag} by a $k$-linear derivation $B \to M \otimes_A B$ (together with the relevant homotopy) is naturally a torsor under $\Hom(L_{B/A},M \otimes_A B)$; in particular, the set of all possible extensions (up to homotopy) of $D$ across $f$ is a torsor under $\pi_0(\Hom(L_{B/A},M \otimes_A B))$.  Generalizing this discussion to simplicial rings in a topos, and then specializing to the case of Deligne-Mumford stacks, we obtain:

\begin{theorem}
\label{thm:dag-obs-der}
Let $f:X \to Y$ be a flat morphism of Deligne-Mumford stacks, and let $D_Y:L_Y  \to \mathcal{M}$ be a derivation on $Y$ into a connective quasi-coherent complex $\mathcal{M}$ of $\mathcal{O}_Y$-modules. Then the obstruction to the existence of a derivation $D_X:L_X \to f^* \mathcal{M}$ commuting with $f^* D_Y$ is the map
\[ \ob(f,f^* D_Y): L_{X/Y}[-1] \stackrel{\kappa(f)}{\to} f^* L_Y \stackrel{f^* D_Y}{\to} f^*\mathcal{M} \]
where the map $\kappa(f)$ is the Kodaira-Spencer class for $f$. When $\ob(f,f^* D_S)$ vanishes, the set of all pairs $(D_X:L_X \to f^*\mathcal{M},H:D_X \to f^* D_Y)$ (where $D_X$ is a derivation, and $H$ is a homotopy expressing the commutativity of $D_X$ with $f^*D_Y$) up to homotopy is a torsor for $\Ext^0_X(L_{X/Y},f^*\mathcal{M})$. Moreover, the $\infty$-groupoid of automorphisms of any such pair is equivalent to $\Hom_X(L_{X/Y},f^*\mathcal{M}[-1])$ 
\end{theorem}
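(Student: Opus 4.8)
The plan is to globalize the purely algebraic discussion carried out in the paragraphs immediately preceding the theorem. First I would record the clean affine statement underlying it. Given a map $f\colon A \to B$ in $\SAlg_k$, a connective $A$-module $M$, and a derivation $D_A\colon A \to M$ (equivalently a map $L_A \to M$, by Theorem \ref{thm:dag-der}), the $\infty$-groupoid of pairs $(D_B,H)$ --- where $D_B\colon B \to M\otimes_A B$ is a $k$-linear derivation and $H$ is a homotopy identifying the restriction of $D_B$ along $f$ with $f^*D_A$ --- is by definition the homotopy fiber over $D_A$ of the restriction map $\Der_k(B,M\otimes_A B) \to \Der_k(A,M)$. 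Using Theorem \ref{thm:dag-der} together with the functoriality recorded in \S\ref{derfunc}, this restriction map is identified with the map $\Hom_B(L_B,M\otimes_A B) \to \Hom_B(L_A\otimes_A B,M\otimes_A B)$ induced by $L_A\otimes_A B \to L_B$.

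Next I would feed the transitivity triangle $L_A\otimes_A B \to L_B \to L_{B/A}$ of \S\ref{dertrans} into the exact contravariant functor $\Hom_B(-,M\otimes_A B)$. This produces a fiber sequence of $\infty$-groupoids, and reading off the fiber over the point $D_A$ shows at once: (a) the fiber is non-empty if and only if the composite $\ob(f,f^*D_A)\colon L_{B/A}[-1]\xrightarrow{\kappa(f)} L_A\otimes_A B \xrightarrow{D_A\otimes_A B} M\otimes_A B$ is null-homotopic, where $\kappa(f)$ is the connecting (Kodaira--Spencer) map; (b) when it is non-empty, the fiber is a torsor under $\Hom_B(L_{B/A},M\otimes_A B)$, so the set of such pairs up to homotopy is a torsor under $\pi_0\Hom_B(L_{B/A},M\otimes_A B) = \Ext^0_B(L_{B/A},M\otimes_A B)$; and (c) the automorphism $\infty$-groupoid of any point is $\Omega\Hom_B(L_{B/A},M\otimes_A B)\simeq \Hom_B(L_{B/A},(M\otimes_A B)[-1])$. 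This is precisely the affine incarnation of the three assertions of the theorem, and it is a formal consequence of Theorem \ref{thm:dag-der} and the transitivity triangle.

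Then I would sheafify. Illusie's construction of the cotangent complex, the transitivity triangle, and the pullback/base-change functoriality are all available for simplicial commutative rings in an arbitrary topos \cite{IllusieCC1} (see also \cite{LurieDAG}); applying this to the small \'etale topos of the Deligne--Mumford stack $X$ and the flat morphism $f\colon X\to Y$ turns the two steps above into the stated global statement, with $M\otimes_A B$ replaced by $f^*\mathcal{M}$ and the algebraic mapping spaces replaced by their $\QCoh$ counterparts. The flatness of $f$ guarantees that $f^*$ is exact, so that $f^*\mathcal{M}$ is connective whenever $\mathcal{M}$ is; and since $X$ and $Y$ are Deligne--Mumford (not merely Artin), the complexes $L_X$, $L_Y$, $L_{X/Y}$ are themselves connective, so that every mapping-space, torsor, and automorphism identification above retains its naive meaning and no derived stacks intervene.

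I expect the only genuine obstacle to be this last globalization: one must know that the transitivity triangle and the pullback compatibility of the cotangent complex hold for morphisms of Deligne--Mumford stacks with $f^*$ interpreted as derived pullback, and one must confirm that the flatness hypothesis on $f$ keeps the entire computation inside connective modules (so that $\Der_Y(-,\mathcal{M})$ is defined and the torsor structure is honest). Given these standard inputs there is nothing further to grind through, as the content of the theorem is entirely contained in the affine computation of the first two steps.
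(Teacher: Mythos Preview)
Your proposal is correct and follows essentially the same route as the paper: the paragraph preceding the theorem carries out exactly the affine computation you describe (translate derivations into maps out of the cotangent complex via Theorem~\ref{thm:dag-der}, feed the transitivity triangle into $\Hom(-,M\otimes_A B)$, and read off the obstruction, torsor, and automorphism statements from the resulting fiber sequence), and then globalizes by the one-line remark ``generalizing this discussion to simplicial rings in a topos, and then specializing to the case of Deligne--Mumford stacks.'' Your additional care about why flatness and the Deligne--Mumford hypothesis keep everything connective (so no derived stacks appear) is exactly the point the paper flags in the introduction to the appendix.
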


Theorem \ref{thm:dag-obs-der} can be easily checked in the case where $\mathcal{M}$ is discrete (in which case the automorphisms mentioned at the end of Theorem \ref{thm:dag-obs-der} are all trivial). The general case comes at no extra cost, and the additional flexibility of allowing $\mathcal{M}$ to be a genuine complex instead of a sheaf will allow us later to treat the obstruction theory of square-zero extensions as a special case of the obstruction theory of derivations as presented in Theorem \ref{thm:dag-obs-der}; see \S \ref{obssqzero}, especially Theorem \ref{thm:dag-sqzero-obs}, which is essentially equivalent to the case of Theorem \ref{thm:dag-obs-der} when $\mathcal{M}$ is taken to be a sheaf placed in homological degree $1$. Theorem \ref{thm:dag-obs-der} is not stated explicitly in \cite{IllusieCC1} for the simple reason that Illusie chooses not to develop an obstruction theory for derivations.

\subsection{Compatibilities for obstructions with respect to a morphism} \label{compat} Let $A \stackrel{f}{\to} B \stackrel{g}{\to} C$ be composable maps in $\SAlg_k$. Given an $A$-module $M$ and a derivation $D:A \to M$, the discussion in \S \ref{derobs} produces maps 
\[ \ob(f,D):L_{B/A}[-1] \to M \otimes_A B \quad \textrm{and} \quad \ob(g \circ f,D):L_{C/A}[-1] \to M \otimes_A C\] 
which are obstructions to extending the derivation $D$ across $f$ and $g\circ f$ respectively. These obstructions are compatible in the sense that the following diagram commutes:
\[ \xymatrix{ L_{B/A} \otimes_B C \ar[rr]^-{\ob(f,D) \otimes_B C} \ar[d] & & M \otimes_A B \otimes_B C[1] \simeq M \otimes_A C[1] \ar[d] \\
			  L_{C/A} \ar[rr]^-{\ob(g\circ f,D)} & & M \otimes_A C [1]. } \]
This compatibility follows formally from the commutativity of the following diagram (which we leave to the reader to verify)
\[ \xymatrix{ L_{B/A}[-1] \otimes_B C \ar[r] \ar[d] & L_A \otimes_A B \otimes_B C \ar[r] \ar[d]^{\simeq} & L_B \otimes_B C \ar[d] \\
			  L_{C/A}[-1] \ar[r] \ar[rd] & L_A \otimes_A C \ar[r] \ar[d] & L_C \\
				& M \otimes_A C. & & } \]
Here the first row is the exact triangle induced by tensoring the (rotated) transitivity triangle for $A \to B$ with $C$, while the second row is the transitivity triangle for $A \to C$. Generalizing this discussion to simplicial rings in a topos and then specializing to the case of Deligne-Mumford stacks, we obtain:

\begin{theorem}
\label{thm:dag-obs-der-compat}
Let $g:Y \to S$ and $f:X \to S$ be flat morphisms of Deligne-Mumford stacks, and let $\pi:Y \to X$ be an $S$-morphism. Let $D_S:L_S \to \mathcal{M}$ be a derivation on $S$ into a connective quasi-coherent complex $\mathcal{M}$ of $\mathcal{O}_S$-modules. Then the obstructions $\ob(f,f^* D_S)$ and $\ob(g,g^* D_S)$ (as defined in Theorem \ref{thm:dag-obs-der}) are compatible in the sense that
\[ \xymatrix{  \pi^* L_{X/S}[-1] \ar[rr]^{\pi^*} \ar[rrd]_{\pi^* \ob(f,f^* D_S)} & &  L_{Y/S}[-1] \ar[d]^{\ob(g,g^* D_S)} \\
				& & \pi^* f^* \mathcal{M} \simeq g^* \mathcal{M} } \]
is commutative, i.e., a canonical homotopy expressing the commutativity exists. In particular, the cohomology classes
\[ \ob(f,f^* D_S) \in \Ext^1_X(L_{X/S},f^*\mathcal{M}) \quad \textrm{and} \quad \ob(g,g^* D_S) \in \Ext^1_Y(L_{Y/S},g^* \mathcal{M}) \] 
map to the same class in 
\[ \Ext^1_Y(\pi^* L_{X/S}, \pi^* f^* \mathcal{M}) \simeq \Ext^1_Y(\pi^* L_{X/S}, g^* \mathcal{M}).\]
under the natural maps.

\end{theorem}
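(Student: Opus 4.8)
The plan is to deduce this from the purely algebraic compatibility already worked out in \S\ref{compat}, simply by running that argument for sheaves of simplicial commutative rings on the small \'etale site of a Deligne--Mumford stack instead of for ordinary rings; no new idea is needed beyond the functoriality of the cotangent-complex formalism. First I would unwind the two obstruction classes: by their definition in Theorem \ref{thm:dag-obs-der} one has $\ob(f,f^*D_S) \simeq (f^*D_S)\circ\kappa(f)$ and $\ob(g,g^*D_S) \simeq (g^*D_S)\circ\kappa(g)$, where $\kappa(f):L_{X/S}[-1]\to f^*L_S$ and $\kappa(g):L_{Y/S}[-1]\to g^*L_S$ are the Kodaira--Spencer maps of \S\ref{dertrans}. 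Since $\pi$ is an $S$-morphism we have $f\circ\pi = g$, so there is a canonical identification $\pi^*f^* \simeq g^*$ of pullback functors; under it $\pi^*(f^*D_S)\simeq g^*D_S$, and the problem collapses to exhibiting a canonical homotopy
\[ \pi^*\kappa(f) \;\simeq\; \kappa(g)\circ\left(\pi^*L_{X/S}[-1]\to L_{Y/S}[-1]\right) \]
of maps $\pi^*L_{X/S}[-1]\to g^*L_S$, the second map being the shift of the natural map in the transitivity triangle $\pi^*L_{X/S}\to L_{Y/S}\to L_{Y/X}$ of \S\ref{dertrans} applied to $Y\xrightarrow{\pi}X\xrightarrow{f}S$.

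Next I would produce this homotopy exactly as in the last display of \S\ref{compat}. Applying the cotangent-complex formalism of \S\ref{cotcomplex}, \S\ref{dertrans}, \S\ref{derobs} and the base-change identification of \S\ref{derbasechange} to the maps of sheaves of simplicial commutative rings $\pi^{-1}f^{-1}\mathcal{O}_S\to\pi^{-1}\mathcal{O}_X\to\mathcal{O}_Y$ on the small \'etale site of $Y$ yields, by naturality of the transitivity triangle in the ring maps, a commutative diagram
\[ \xymatrix{ \pi^* L_{X/S}[-1] \ar[r] \ar[d] & g^* L_S \ar[r] \ar@{=}[d] & \pi^* L_X \ar[d] \\ L_{Y/S}[-1] \ar[r] \ar[rd]_{\ob(g,g^*D_S)} & g^* L_S \ar[r] \ar[d]^{g^* D_S} & L_Y \\ & g^* \mathcal{M} & } \]
whose first row is the rotated transitivity triangle of $X/S$ pulled back along $\pi$ (using $\pi^*f^*L_S\simeq g^*L_S$), whose second row is the rotated transitivity triangle of $Y/S$, whose top two rows together form a map of (rotated) transitivity triangles, and whose lower triangle commutes by the definition of $\ob(g,g^*D_S)$. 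Post-composing the left-hand square with $g^*D_S:g^*L_S\to g^*\mathcal{M}$ makes the upper composite equal to $\pi^*\ob(f,f^*D_S)$ and the lower composite equal to $\ob(g,g^*D_S)$; the resulting commuting square is precisely the homotopy required by the theorem. Taking $\pi_0$ of the relevant mapping spaces --- equivalently, the induced classes in $\Ext^1$ --- then gives the final assertion about cohomology classes.

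The flatness of $f$ and $g$ enters only to keep $X$, $Y$, $S$ in the realm of ordinary (as opposed to derived) Deligne--Mumford stacks, consistent with the conventions of this appendix. I do not expect a genuine obstacle; the one point deserving care is that the naturality of the transitivity triangle and the base-change equivalence of \S\ref{derbasechange} must be used $\infty$-categorically, so that ``the diagram commutes'' records all the higher coherences and the homotopy asserted in the theorem is canonical rather than merely existing. This is exactly what is available from \cite[Proposition 3.2.12]{LurieDAG} (with \cite[Proposition II.2.1.2]{IllusieCC1} providing the underlying triangulated statement), and it is the reason the argument is phrased in an arbitrary topos rather than first reduced to the affine case treated in \S\ref{compat}.
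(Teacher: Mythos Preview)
Your proposal is correct and follows essentially the same route as the paper: the paper's own argument is precisely the algebraic diagram in \S\ref{compat} (the map of rotated transitivity triangles for $A\to B\to C$ composed with the derivation), generalized to simplicial rings in a topos and then specialized to Deligne--Mumford stacks, and your diagram is exactly the geometric translation of that display. The paper adds only the one-line remark that the result follows from the explicit formula for the obstruction in Theorem~\ref{thm:dag-obs-der} together with the functoriality in $A$ of Theorem~\ref{thm:dag-der}, which is the same content you invoke.
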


This theorem is not stated explicitly in \cite{LurieDAG} or \cite{IllusieCC1}, but follows from the formula given in Theorem \ref{thm:dag-obs-der} and the functoriality in $A$ of Theorem \ref{thm:dag-der}.

\subsection{Square-zero extensions} \label{sqzeroext} A square-zero extension of an $A \in \SAlg_k$ by an $A$-module $M$ is, by definition, a derivation $A \to M[1]$. In order to see the connection with the usual definition, we use the following construction: a derivation $D:A \to M[1]$ gives rise to, by definition, a $k$-algebra section $s_D:A \to A \oplus M[1]$ of the projection map $A \oplus M[1] \to A$. Hence, we can form the following pullback square
\[ \xymatrix{ A^D \ar[r] \ar[d] & A \ar[d]^{s_D} \\
              A \ar[r]^-{s_0} & A \oplus M[1] } \]
where $s_0$ is the map associated to the $0$ derivation $A \to M[1]$, i.e., the standard section. When $A$ and $M$ are discrete, one calculates that $A^D$ is also discrete, and that the algebra map $A^D \to A$ is surjective with kernel a square-zero ideal isomorphic to $M$, justifying the choice of terminology. There also exists an intrinsic definition of square-zero extensions, and it is a theorem of Lurie (see the $k = \infty$ and $n = 0$ case of \cite[Theorem 8.4.1.26]{LurieHA}) that the preceding construction produces all such square-zero extensions when certain connectivity assumptions on $M$ (harmless for applications we have in mind) are satisfied. Hence, we will often abuse notation and denote a square-zero extension of $A$ by $M$ via an algebra map $\tilde{A} \to A$ with kernel $M$. Generalizing this discussion to rings in a topos and then specializing to the case of Deligne-Mumford stacks, we obtain:

\begin{theorem}
\label{thm:dag-sq-zero}
Let $X$ be a Deligne-Mumford stack over some base scheme $S$, and let $\mathcal{I} \in \QCoh(X)$. Then the construction above defines an equivalence between the groupoid $\Hom(L_{X/S},\mathcal{I}[1])$ and the groupoid of all square-zero extensions of $X$ by $\mathcal{I}$ over $S$.
\end{theorem}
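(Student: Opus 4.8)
The plan is to reduce the assertion to the affine case, which is exactly the theorem of Lurie quoted in \S\ref{sqzeroext}, and then to globalize by \'etale descent. First I would dispose of the affine case. Working \'etale-locally on $X$, we may assume $X = \Spec(A)$ maps into an affine open $\Spec(B) \subseteq S$, and that $\mathcal{I}$ corresponds to the discrete (hence connective) $A$-module $M$. By Theorem \ref{thm:dag-der} and its functoriality, a map $L_{A/B} \to M[1]$ is the same datum as a $B$-algebra section $s_D : A \to A \oplus M[1]$ of the projection, and the pullback square of \S\ref{sqzeroext} produces from $s_D$ an object $A^D \in \SAlg_B$ together with a map $A^D \to A$. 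This is precisely the construction in the $n = 0$ case of \cite[Theorem 8.4.1.26]{LurieHA}; since $L_{A/B}$ is connective (as $A$ is discrete) and $M$ is connective, the connectivity hypotheses of that theorem are met, so it applies verbatim and asserts that $D \mapsto (A^D \to A)$ is an equivalence from $\Hom(L_{A/B}, M[1])$ onto the $\infty$-groupoid of square-zero extensions of $A$ by $M$ over $B$. (That this $\infty$-groupoid is $1$-truncated, hence an ordinary groupoid, also follows from connectivity of $L_{A/B}$ and discreteness of $M$.)

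Next I would globalize. Both sides of the asserted equivalence are the global sections of sheaves of groupoids on the small \'etale site of $X$. On the right, square-zero extensions of $X$ by $\mathcal{I}$ over $S$ satisfy \'etale descent: an \'etale morphism has vanishing relative cotangent complex and so deforms uniquely along any square-zero extension, whence extensions glue along \'etale covers. On the left, the presheaf $U \mapsto \Hom_U(L_{U/S}, \mathcal{I}|_U[1])$ is (the $\infty$-groupoid of global sections of) the derived sheaf $\calHom(L_{X/S}, \mathcal{I}[1])$, using that $L_{X/S}|_U \simeq L_{U/S}$ for \'etale $U \to X$; its global sections recover $\Hom(L_{X/S}, \mathcal{I}[1])$. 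The construction of \S\ref{sqzeroext} is manifestly compatible with \'etale localization and, by the previous paragraph, is an equivalence over every affine \'etale $U \to X$; it therefore induces an equivalence of the two sheaves, and hence of their global sections. For a Deligne-Mumford stack rather than a scheme nothing changes: one runs the same argument on the small \'etale site of $X$, descending the affine input along an \'etale atlas $U \to X$ by a scheme, all of whose iterated fiber products over $X$ are again schemes. The one structural input is that $L_{X/S}$ is connective for any DM stack --- this is exactly where the Deligne-Mumford hypothesis is used, cf.\ the Remark following Lemma \ref{cmsgorsch} --- so that Lurie's affine theorem applies on $U$ and on $U \times_X U$.

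I expect the principal obstacle to be the affine statement itself, namely that the pullback construction is essentially surjective (produces \emph{all} square-zero extensions) and fully faithful; but this is precisely Lurie's theorem, so on our side the remaining work is bookkeeping: checking that our connectivity hypotheses place us within the scope of that theorem, and checking that square-zero extensions form an \'etale stack so that the affine-local equivalence glues. Neither is hard --- the descent assertion follows from the unobstructedness and rigidity of \'etale morphisms under infinitesimal thickening --- so the proof is essentially an assembly of \cite[Theorem 8.4.1.26]{LurieHA} with a routine descent argument.
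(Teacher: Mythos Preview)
Your sketch is correct. Note, however, that the paper does not actually prove Theorem~\ref{thm:dag-sq-zero}: immediately after the statement it simply records that the result ``can be deduced from the $k=0$ case of \cite[Proposition 3.3.5]{LurieDAG}, and can also be found in \cite[Theorem III.1.2.3]{IllusieCC1}.'' Both of these references already treat the global (topos/stack) case directly, so the paper sidesteps the descent argument you carry out by hand. Your route---invoke the affine statement \cite[Theorem 8.4.1.26]{LurieHA} (the same reference the paper cites in \S\ref{sqzeroext}) and then globalize by \'etale descent---is exactly how one would unpack those citations, and nothing in it is wrong; the only cosmetic wrinkle is that the phrase ``the derived sheaf $\calHom(L_{X/S},\mathcal{I}[1])$'' conflates a complex of sheaves with the $\infty$-sheaf of mapping spaces, but your intended meaning (that $U \mapsto \Hom_U(L_{U/S},\mathcal{I}|_U[1])$ satisfies \'etale descent, which follows from descent for $\QCoh$) is clear and correct.
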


The notion of ``square-zero extensions'' used in Theorem \ref{thm:dag-sq-zero} coincides with that of \cite[\S III.1]{IllusieCC1}. Theorem \ref{thm:dag-sq-zero} can be deduced from $k = 0$ case of \cite[Proposition 3.3.5]{LurieDAG}, and can also be found in \cite[Theorem III.1.2.3]{IllusieCC1}.

\subsection{Extending square-zero extensions across morphisms and compatibilities} \label{obssqzero} Let $f:A \to B$ be a map in $\SAlg_k$, and let $M$ be an $A$-module. Given a square-zero extension $\tilde{A} \to A$ of $A$ by $M$, a natural question to ask is if there exists a square-zero extension $\tilde{B} \to B$ of $B$ by $M \otimes_A B$ and a map $\tilde{A} \to \tilde{B}$ such that the following diagram commutes and is a pushout\footnote{Note that when $A, M$ and $B$ are discrete and $f$ is flat, the rings $\tilde{A}$ and $\tilde{B}$ are necessarily discrete with the map $\tilde{A} \to \tilde{B}$ being flat by the local flatness criterion. Hence, the preceding question generalizes the ordinary deformation-theoretic question of extending square-zero deformations of the target of a flat morphism of Deligne-Mumford stacks to that of the source.}:
\[ \xymatrix{  \tilde{A}  \ar[r] \ar[d] & A \ar[d] \\
				\tilde{B} \ar[r] & B } \]
Using our definition of square-zero extensions from \S \ref{sqzeroext}, this question is equivalent to the following: given a derivation $D:A \to M[1]$, when does there exist a derivation $D':B \to M \otimes_A B[1]$ such that the following diagram commutes?
\[ \xymatrix{ A \ar[r]^D \ar[d] & M[1] \ar[d] \\
			 B \ar[r]^-{D'} & M \otimes_A B[1]. } \]
Using the obstruction theory explained in \S \ref{derobs}, we find that such an extension exists if and only if $\ob(f,D)$ vanishes. When $\ob(f,D)$ does vanish, the $\infty$-groupoid of all possible extensions is naturally a torsor under $\Hom(L_{B/A},M \otimes_A B[1])$; in particular, the set of all possible extensions (up to homotopy) of $\tilde{A} \to A$ across $f$ is a torsor under $\pi_0(\Hom(L_{B/A},M \otimes_A B[1]))$.  Generalizing this discussion to rings in a topos and then specializing to the case of Deligne-Mumford stacks, we obtain:

\begin{theorem}
\label{thm:dag-sqzero-obs}
Let $f:X \to S$ be a flat morphism of Deligne-Mumford stacks. Fix a quasi-coherent $\mathcal{O}_S$-module $\mathcal{I}$, and a square-zero thickening $S \hookrightarrow S'$ of $S$ by $\mathcal{I}$ classified by a derivation $D_S:L_S \to \mathcal{I}[1]$. The obstruction to finding a square-zero thickening $X \hookrightarrow X'$ of $X$ by $f^* \mathcal{I}$ lying above $S \hookrightarrow S'$ (via a flat map $X' \to S'$) is the map
\[ \ob(f,f^* D_S):L_{X/S}[-1] \stackrel{\kappa(f)}{\to} f^* L_S \stackrel{f^* D_S}{\to} f^*\mathcal{I}[1] \]
where the map $\kappa(f)$ is the Kodaira-Spencer class of $f$. When $\ob(f,f^* D_S)$ vanishes, the set of all pairs $(X \hookrightarrow X', f':X' \to S')$ (where $X'$ is a thickening of $X$ by $f^* \mathcal{I}$, and $f'$ is a flat map deforming $f$) up to isomorphism is a torsor for $\Ext^1_X(L_{X/S},f^*\mathcal{I})$. Moreover, the group of automorphisms of any such pair is canonically identified with $\Ext^0_X(L_{X/S},f^* \mathcal{I})$.
\end{theorem}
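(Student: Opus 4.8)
The plan is to derive this statement from Theorem \ref{thm:dag-obs-der} by specializing its coefficient complex $\mathcal{M}$ to the sheaf $\mathcal{I}$ placed in homological degree one, i.e.\ by taking $\mathcal{M} = \mathcal{I}[1]$ (which is connective, so that Theorem \ref{thm:dag-obs-der} applies). The real work is the translation of the geometric problem in the statement --- producing a flat square-zero thickening $X \hookrightarrow X'$ over $S'$ --- into the problem of extending a derivation across a morphism treated in \S\ref{derobs}.

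First I would set up this dictionary. \'Etale-locally on $X$ we may assume $S = \Spec A$, $S' = \Spec A'$ and $X = \Spec B$ are affine, with $A \to B$ flat and $A' \to A$ a square-zero extension by $\mathcal{I}$; by Theorem \ref{thm:dag-sq-zero} the latter is the datum of the derivation $D_S \colon L_S \to \mathcal{I}[1]$. By the footnote in \S\ref{obssqzero} (flatness over the square-zero base $S'$ is equivalent to the relevant square of square-zero extensions being a pushout), giving a flat square-zero thickening $X \hookrightarrow X'$ by $f^*\mathcal{I}$ lying over $S \hookrightarrow S'$ via a flat map $X' \to S'$ is the same as extending the square-zero extension $A' \to A$ across $A \to B$; and, unwinding the description of square-zero extensions as derivations from \S\ref{sqzeroext}, this is the same as filling in diagram \eqref{derfuncdiag} with $M = \mathcal{I}[1]$, i.e.\ producing a derivation $D_X \colon L_X \to f^*\mathcal{I}[1]$ together with a homotopy making it compatible with $f^* D_S$. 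Thus the groupoid of thickenings we must analyze is identified with the homotopy fiber over $D_S$ of the restriction map $\Der(B, f^*\mathcal{I}[1]) \to \Der(A, \mathcal{I}[1])$. Since all of this is compatible with \'etale localization (by Theorem \ref{thm:dag-sq-zero} and the compatibility of the cotangent complex with localization), it globalizes; equivalently, one may simply quote Theorem \ref{thm:dag-obs-der} with $\mathcal{M} = \mathcal{I}[1]$ directly for Deligne-Mumford stacks.

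With the dictionary in hand, the three assertions are read off from \S\ref{derobs}. Using Theorem \ref{thm:dag-der} to identify derivations with maps out of cotangent complexes, together with the rotated transitivity triangle $L_{X/S}[-1] \to f^* L_S \to L_X$, the obstruction to the homotopy fiber being non-empty is exactly the composite $L_{X/S}[-1] \xrightarrow{\kappa(f)} f^* L_S \xrightarrow{f^* D_S} f^*\mathcal{I}[1]$, which is the map $\ob(f, f^* D_S)$, a class in $\Ext^2_X(L_{X/S}, f^*\mathcal{I})$. When it vanishes, \S\ref{derobs} exhibits the homotopy fiber as a torsor under the simplicial abelian group $\Hom_X(L_{X/S}, f^*\mathcal{I}[1])$; since $L_{X/S}$ is connective and $f^*\mathcal{I}$ is discrete, one has $\pi_i \Hom_X(L_{X/S}, f^*\mathcal{I}[1]) = \Ext^{1-i}_X(L_{X/S}, f^*\mathcal{I})$, which vanishes for $i \geq 2$. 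Hence isomorphism classes of such thickenings form a torsor under $\pi_0 = \Ext^1_X(L_{X/S}, f^*\mathcal{I})$, and the automorphism group of any one of them is $\pi_1 = \Ext^0_X(L_{X/S}, f^*\mathcal{I})$, as claimed.

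The main obstacle --- the only step that is more than bookkeeping --- is the identification carried out in the second paragraph: matching the groupoid of flat square-zero thickenings of $X$ over $S'$ with the groupoid of derivations $D_X$ extending $f^* D_S$. This rests on the equivalence between flatness over the square-zero base and the pushout condition, together with the compatibility of the square-zero/derivation correspondence of Theorem \ref{thm:dag-sq-zero} with base change along $A' \to A$ and with \'etale localization; the flatness of $f$ in the hypotheses is precisely what makes $f^*\mathcal{I}$ and the pulled-back square-zero data behave as expected. Everything after this point is the formal manipulation with the transitivity triangle and with homotopy groups of mapping spectra already recorded in \S\ref{derobs}, so no further input is needed.
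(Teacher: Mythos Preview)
Your proposal is correct and follows exactly the route the paper takes: the discussion preceding Theorem~\ref{thm:dag-sqzero-obs} in \S\ref{obssqzero} reduces the problem to \S\ref{derobs} (equivalently, Theorem~\ref{thm:dag-obs-der}) with $\mathcal{M} = \mathcal{I}[1]$ via the square-zero/derivation dictionary of \S\ref{sqzeroext}, and the paper then simply records references to Lurie and Illusie. Your reading of the torsor and automorphism statements from $\pi_0$ and $\pi_1$ of $\Hom_X(L_{X/S}, f^*\mathcal{I}[1])$ is the intended unpacking.
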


Theorem \ref{thm:dag-sqzero-obs} follows from \cite[Proposition 8.4.2.5]{LurieHA}. Everything except the formula for $\ob(f,f^* D_S)$ can also be found in \cite[Proposition III.2.3.2]{IllusieCC1},  and the formula can be found in \cite[\S III.2.3.4]{IllusieCC1}. Finally, combining Theorem \ref{thm:dag-sqzero-obs} with Theorem \ref{thm:dag-obs-der-compat}, we obtain:

\begin{theorem}
\label{thm:dag-sqzero-obs-compat}
Let $g:Y \to S$ and $f:X \to S$ be flat morphisms of Deligne-Mumford stacks, and let $\pi:Y \to X$ be an $S$-morphism. Fix a quasi-coherent $\mathcal{O}_S$-module $\mathcal{I}$, and square-zero thickening $S \hookrightarrow S'$ of $S$ by $\mathcal{I}$ classified by a derivation $D_S:L_S \to \mathcal{I}[1]$. Then the obstructions $\ob(f,f^* D_S)$ and $\ob(g,g^* D_S)$ (as defined in Theorem \ref{thm:dag-sqzero-obs}) are compatible in the sense that 
\[ \xymatrix{  \pi^* L_{X/S}[-1] \ar[rr]^{\pi^*} \ar[rrd]_{\pi^* \ob(f,f^* D_S)} & &  L_{Y/S}[-1] \ar[d]^{\ob(g,g^* D_S)} \\
				& & \pi^* f^* \mathcal{I}[1] \simeq g^* \mathcal{I}[1] } \]
is commutative, i.e., a canonical homotopy expressing the commutativity exists. In particular, the cohomology classes 
\[ \ob(f,f^* D_S) \in \Ext^2_X(L_{X/S},f^*\mathcal{I}) \quad \textrm{and} \quad \ob(g,g^* D_S) \in \Ext^2_Y(L_{Y/S},g^* \mathcal{I}) \] 
map to the same class in 
\[ \Ext^2_Y(\pi^* L_{X/S}, \pi^* f^* \mathcal{I}) \simeq \Ext^2_Y(\pi^* L_{X/S}, g^* \mathcal{I}).\]
under the natural maps.
\end{theorem}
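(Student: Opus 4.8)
The plan is to obtain the statement formally, by feeding the square-zero obstruction theory of Theorem \ref{thm:dag-sqzero-obs} into the derivation-level compatibility of Theorem \ref{thm:dag-obs-der-compat} through the dictionary between square-zero extensions and derivations set up in \S\ref{sqzeroext} and \S\ref{obssqzero}. The first step is to pass from square-zero thickenings to derivations: put $\mathcal{M} = \mathcal{I}[1]$. Since $\mathcal{I}$ is a quasi-coherent sheaf it sits in homological degree $0$, so the shift $\mathcal{M} = \mathcal{I}[1]$ is connective and is thus an admissible coefficient object for Theorem \ref{thm:dag-obs-der} and Theorem \ref{thm:dag-obs-der-compat}. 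Under the equivalence of Theorem \ref{thm:dag-sq-zero}, the square-zero thickening $S \hookrightarrow S'$ classified by $D_S$ corresponds precisely to the derivation $D_S : L_S \to \mathcal{M}$, and --- as recorded in \S\ref{obssqzero} --- the obstruction of Theorem \ref{thm:dag-sqzero-obs} to deforming the source of a flat map to $S$ across $S \hookrightarrow S'$ is, tautologically, the same map as the obstruction of \S\ref{derobs} to extending the derivation $D_S$ into that source. Concretely, $\ob(f,f^* D_S)$ and $\ob(g,g^* D_S)$ are, respectively, the composites
\[
	L_{X/S}[-1] \xrightarrow{\kappa(f)} f^* L_S \xrightarrow{f^* D_S} f^* \mathcal{M}
	\qquad\text{and}\qquad
	L_{Y/S}[-1] \xrightarrow{\kappa(g)} g^* L_S \xrightarrow{g^* D_S} g^* \mathcal{M},
\]
which are exactly the obstruction classes appearing in both Theorem \ref{thm:dag-sqzero-obs} and Theorem \ref{thm:dag-obs-der}.

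With this translation in hand, I would simply invoke Theorem \ref{thm:dag-obs-der-compat} for the flat morphisms $g : Y \to S$ and $f : X \to S$, the $S$-morphism $\pi : Y \to X$, and the coefficient complex $\mathcal{M} = \mathcal{I}[1]$. That theorem provides a canonical homotopy making the triangle
\[
\xymatrix{
\pi^* L_{X/S}[-1] \ar[rr]^{\pi^*} \ar[rrd]_{\pi^* \ob(f, f^* D_S)} & & L_{Y/S}[-1] \ar[d]^{\ob(g, g^* D_S)} \\
& & \pi^* f^* \mathcal{M} \simeq g^* \mathcal{M}
}
\]
commute, and substituting $\mathcal{M} = \mathcal{I}[1]$ together with $\pi^* f^* \mathcal{I}[1] \simeq g^* \mathcal{I}[1]$ turns this into precisely the diagram in the statement. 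The ``in particular'' assertion then follows by passing to connected components: the identification $\Hom(L_{X/S}[-1], f^* \mathcal{I}[1]) \simeq \Hom(L_{X/S}, f^* \mathcal{I}[2])$ identifies $\pi_0$ of the source of $\ob(f, f^* D_S)$ with $\Ext^2_X(L_{X/S}, f^* \mathcal{I})$ (and similarly for $Y$), and the commutativity of the triangle forces $\ob(f,f^* D_S)$ and $\ob(g,g^* D_S)$ to have the same image in $\Ext^2_Y(\pi^* L_{X/S}, g^* \mathcal{I})$.

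The only delicate point --- and the closest thing to an obstacle --- is bookkeeping rather than mathematics: one must check that the square-zero-to-derivation translation of Theorem \ref{thm:dag-sq-zero} and \S\ref{obssqzero} is natural in the morphism $\pi$, so that pulling back square-zero thickenings along $\pi$ matches pulling back the associated derivations, and that the homological shift conventions are applied consistently so that $\mathcal{I}[1]$ meets the connectivity hypothesis of Theorem \ref{thm:dag-obs-der-compat} on the nose. Both follow from the naturality of the constructions in \S\ref{sqzeroext}, but they are exactly what must be spelled out in order for the two cited theorems to combine without friction.
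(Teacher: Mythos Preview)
Your proposal is correct and matches the paper's own argument essentially verbatim: the paper introduces Theorem~\ref{thm:dag-sqzero-obs-compat} with the words ``combining Theorem~\ref{thm:dag-sqzero-obs} with Theorem~\ref{thm:dag-obs-der-compat}, we obtain,'' which is exactly the substitution $\mathcal{M} = \mathcal{I}[1]$ you carry out. The extra care you take in spelling out the naturality and connectivity bookkeeping is more than the paper itself provides.
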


\newpage
\bibliography{modprodbib}
\bibliographystyle{amsalpha}

\end{document}